\title{Properly proximal groups and their von Neumann algebras}
\author{R\'emi Boutonnet, Adrian Ioana and Jesse Peterson}
\address{CNRS -- Institut de Math\'{e}matiques de Bordeaux, Universit\'{e} de Bordeaux, 351 cours de la Lib\'{e}ration, 33 405 Talence Cedex, France}
\email{remi.boutonnet@math.u-bordeaux.fr}
\address{Department of Mathematics, University of California San Diego, 9500 Gilman Drive, La Jolla, CA 92093, USA}
\email{aioana@ucsd.edu}
\address{Department of Mathematics, Vanderbilt University, 1326 Stevenson Center, Nashville, TN 37240, USA}
\email{jesse.d.peterson@vanderbilt.edu}
\newtheorem{thm}{Theorem}[section]
\newtheorem{prop}[thm]{Proposition}
\newtheorem{cor}[thm]{Corollary}
\newtheorem{lem}[thm]{Lemma}
\theoremstyle{definition}
\newtheorem{defn}[thm]{Definition}
\newtheorem{defn/lem}[thm]{Definition/Lemma}
\newtheorem{rem}[thm]{Remark}
\newtheorem{examp}[thm]{Example}
\newtheorem{problem}{Problem}[]
\newtheorem{note}[thm]{Notation}
\newtheorem{que}[problem]{Question}
\newcommand{\C}{{\mathbb C}}
\newcommand{\F}{{\mathbb F}}
\newcommand{\G}{{\mathbb G}}
\newcommand{\K}{{\mathbb K}}
\newcommand{\N}{{\mathbb N}}
\newcommand{\bP}{{\mathbb P}}
\newcommand{\Q}{{\mathbb Q}}
\newcommand{\R}{{\mathbb R}}
\newcommand{\bS}{{\mathbb S}}
\newcommand{\V}{{\mathbb V}}
\newcommand{\Z}{{\mathbb Z}}
\newcommand{\cC}{{\mathcal C}}
\newcommand{\cB}{{\mathcal B}}
\newcommand{\cF}{{\mathcal F}}
\newcommand{\cG}{{\mathcal G}}
\newcommand{\cJ}{{\mathcal J}}
\newcommand{\cL}{{\mathcal L}}
\newcommand{\cN}{{\mathcal N}}
\newcommand{\cS}{{\mathcal S}}
\newcommand{\cU}{{\mathcal U}}
\newcommand{\fL}{\mathfrak{L}}
\newcommand{\bQ}{{\overline{\Q}}}
\newcommand{\txi}{{\tilde{\xi}}}
\newcommand{\tpsi}{{\tilde{\psi}}}
\newcommand{\opi}{\overline{\pi}}
\newcommand{\oH}{\overline{H}}
\newcommand{\oxi}{\overline{\xi}}
\newcommand{\Ad}{\operatorname{Ad}}
\newcommand{\Out}{\operatorname{Out}}
\newcommand{\Prob}{\operatorname{Prob}}
\newcommand{\GL}{\operatorname{GL}}
\newcommand{\SL}{\operatorname{SL}}
\newcommand{\PSL}{\operatorname{PSL}}
\newcommand{\SO}{\operatorname{SO}}
\newcommand{\conv}{\operatorname{conv}}
\newcommand{\diag}{\operatorname{diag}}
\newcommand{\ot}{\otimes}
\newcommand{\ovt}{\, \overline{\otimes}\,}
\newcommand{\mini}{{\operatorname{min}}}
\newcommand{\eps}{\varepsilon}
\newcommand{\dpr}{^{\prime\prime}}
\newcommand{\op}{{\rm op}}
\newcommand{\actson}{{\, \curvearrowright \,}}
\begin{document}
\begin{abstract}
We introduce a wide class of countable groups, called properly proximal, which contains all non-amenable bi-exact groups, all non-elementary convergence groups, and all lattices in non-compact semi-simple Lie groups, but excludes all inner amenable groups.  
We show that crossed product II$_1$ factors arising from free ergodic
probability measure preserving actions of groups in this class have at most one weakly compact Cartan subalgebra, up to unitary conjugacy. As an application, we obtain the first $W^*$-strong rigidity results for compact actions of $SL_d(\mathbb Z)$ for $d \geq 3$.

\end{abstract}

\maketitle
%\tableofcontents

%%%%%%%%%%
\section{Introduction}
Countable groups and their measure preserving actions naturally give rise to von Neumann algebras, via two constructions of Murray and von Neumann \cite{MvN36,MvN43}. 
This work is motivated by the following general problem: prove structural results for the von Neumann algebras associated with the  arithmetic groups $\SL_d(\Z)$, $d \geq 3$, and their probability measure preserving ({\it p.m.p.}) actions. At present, relatively little is known in this direction. Thus, nearly all available results regarding the group von Neumann algebras $L(\SL_d(\Z))$, $d \geq 3$, are either direct consequences of property (T) \cite{Co80}, or concern inclusions $L\Lambda\subset L(\SL_d(\Z))$ for some subgroups $\Lambda<\SL_d(\Z)$, rather than $L(\SL_d(\Z))$ itself \cite{BC14}. Moreover, while several remarkable rigidity results for crossed product von Neumann algebras associated to actions of $\SL_d(\Z)$ have been obtained in \cite{Po03,Po04,Io10,Bo12},  these are restricted to specific classes of actions.

In contrast, the structure of von Neumann algebras associated with $\Gamma := \SL_2(\Z)$ and its actions is much better understood. 
Indeed, from the perspective of deformation/rigidity theory there has been a lot of work in this direction, starting with two seminal results obtained in the early 2000s. First, Popa used his deformation/rigidity theory to show that the crossed product von Neumann algebra $L^{\infty}(X)\rtimes\Gamma$ associated to any free ergodic p.m.p.\ action $\Gamma\curvearrowright (X,\mu)$ has at most one Cartan subalgebra with the relative property (T) \cite{Po01}. Second, Ozawa employed C$^*$-algebraic techniques to prove that  $L\Gamma$ is {\it solid}: the relative commutant, $A'\cap L\Gamma$, of any diffuse von Neumann subalgebra $A\subset L\Gamma$ is amenable \cite{Oz04}. 
These results have since been considerably strengthened, also in the context of Popa's deformation/rigidity theory, following two breakthroughs of Ozawa and Popa \cite{OP10a} and  Popa and Vaes \cite{PV12a}:

\begin{enumerate}
\item  $L\Gamma$ is \emph{strongly solid}: the normalizer of any diffuse amenable von Neumann subalgebra $A \subset L\Gamma$ generates an amenable von Neumann algebra. Moreover,  $L^\infty(X,\mu) \rtimes \Gamma$ admits $L^\infty(X,\mu)$ as its unique Cartan subalgebra, up to unitary conjugacy, for any free ergodic compact p.m.p. action $\Gamma\curvearrowright (X,\mu)$ (see \cite{OP10a}).
\item $\Gamma$ is  \emph{$\cC$-rigid}: $L^\infty(X,\mu) \rtimes \Gamma$ admits $L^\infty(X,\mu)$ as its unique Cartan subalgebra, up to unitary conjugacy, for any  free ergodic p.m.p. action $\Gamma \actson (X,\mu)$ (see \cite{PV12a}). In particular, $L^{\infty}(X)\rtimes\Gamma$ entirely remembers the orbit equivalence relation  of the action $\Gamma\curvearrowright (X,\mu)$ \cite{FM77b}.
\end{enumerate}
Recall that a Cartan subalgebra of a tracial von Neumann algebra $M$ is a maximal abelian subalgebra $A\subset M$ whose normalizer generates $M$.

In fact, in the last 15 years, a plethora of impressive structural results have been obtained for von Neumann algebras arising from large classes of countable groups $\Gamma$ and their measure preserving actions (see \cite{Oz06A, Po07, Va10, Io17}). 
However, in most of these results, some negative curvature condition on $\Gamma$ is needed, in the form of a geometric assumption (e.g., $\Gamma$ is a hyperbolic group or a lattice in a rank one simple Lie group \cite{Oz04, PV12b}), or a cohomological assumption (e.g., $\Gamma$ has positive first $\ell^2$-Betti number, \cite{Pe09, PS12, chifanpeterson, CS13, Va13}), or an algebraic assumption (e.g., $\Gamma$ is an  amalgamated free product group, \cite{IPP08, CH10, PV09, Io15}). In sharp contrast, lattices in higher rank simple Lie groups, such as $\SL_d(\Z)$ for $d\geq 3$,  
do not satisfy  any reasonable notion of negative curvature. 

The results (1) and (2) were generalized in \cite{CS13} and \cite{PV12b} to any group $\Gamma$ which is both \emph{weakly amenable} \cite{CH89,Ha16} and  \emph{bi-exact} (equivalently, belongs to Ozawa's class $\cS$) \cite{Sk88,Oz04,Oz06}. 
The proofs of statements (1) and (2) for such groups $\Gamma$ split into two parts.
 First, one uses the weak amenability of $\Gamma$ to deduce that any amenable subalgebra of $L(\Gamma)$ or $L^{\infty}(X)\rtimes\Gamma$ satisfies a certain weak compactness property (\cite{OP10a}, see Definition \ref{WC}). This fact is then combined with the bi-exactness of $\Gamma$ to prove the desired conclusions. The weak amenability and bi-exactness properties are enjoyed by hyperbolic groups and lattices in simple Lie groups of rank one. However,  both of these properties fail dramatically for lattices in higher rank simple Lie groups.
 
One of the main goals of this paper is to generalize the bi-exactness methods to a broader class of groups. The class of groups admitting proper cocycles into nonamenable representations was already considered in \cite[Theorem A]{OP10b}, and products of such groups were considered in \cite[Section 4]{CS13}, however, the methods therein do not apply to general higher rank lattices such as $\SL_d(\Z)$ for $d \geq 3$. The following is our first main result:

\begin{thm}\label{toy thm}
For any $d \geq 2$, the von Neumann algebra of $\Gamma := \SL_d(\Z)$ does not admit a weakly compact Cartan subalgebra. Moreover, for any free ergodic p.m.p.\ action $\sigma: \Gamma \actson (X,\mu)$, the crossed product $L^\infty(X,\mu) \rtimes \Gamma$ admits a weakly compact Cartan subalgebra $A$ if and only if $\sigma$ is weakly compact and, in this case, $A$ is unitary conjugate with $L^\infty(X,\mu)$.
\end{thm}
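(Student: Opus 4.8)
The plan is to deduce the entire statement from the general structure theorem for properly proximal groups announced above, using as the single group-theoretic input the fact that $\Gamma := \SL_d(\Z)$ is properly proximal. Indeed, $\Gamma$ is a lattice in the non-compact semisimple Lie group $\SL_d(\R)$, hence properly proximal, and it is non-amenable for every $d \geq 2$; these are exactly the hypotheses feeding the rigidity machinery. (For $d = 2$ the group is virtually free, so bi-exact, and the conclusion is already accessible by the methods of \cite{OP10a,PV12b,CS13}; the real content is the higher-rank range $d \geq 3$.)

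For the crossed product $M := L^\infty(X,\mu) \rtimes \Gamma$, one implication of the ``moreover'' part is immediate: if $\sigma$ is weakly compact then, by the very definition of a weakly compact action (Definition \ref{WC}), the canonical Cartan subalgebra $L^\infty(X,\mu) \subset M$ is weakly compact, so $A = L^\infty(X,\mu)$ does the job. For the converse, I would start from an arbitrary weakly compact Cartan subalgebra $A \subset M$ and feed its normalizer into the dichotomy underlying the uniqueness theorem: since $A$ is weakly compact, $\Gamma$ is properly proximal, and the normalizer $\NN_M(A)$ generates $M$, either $A \preceq_M L^\infty(X,\mu)$, or $M = \NN_M(A)\dpr$ is amenable relative to $L^\infty(X,\mu)$. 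The second alternative forces the action $\Gamma \actson (X,\mu)$ to be amenable, and since it is probability measure preserving this makes $\Gamma$ amenable, contradicting non-amenability of $\SL_d(\Z)$. Thus $A \preceq_M L^\infty(X,\mu)$, and as $A$ and $L^\infty(X,\mu)$ are both Cartan subalgebras, Popa's conjugacy theorem for Cartan subalgebras promotes this intertwining to a genuine unitary conjugacy: $A = u\, L^\infty(X,\mu)\, u^*$ for some $u \in \UU(M)$. Because weak compactness passes through the inner automorphism $\Ad(u^*)$, the subalgebra $L^\infty(X,\mu)$ is then weakly compact as well, i.e.\ $\sigma$ is weakly compact, and simultaneously $A$ is unitarily conjugate with $L^\infty(X,\mu)$, as required.

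The non-existence of a weakly compact Cartan subalgebra in $L\Gamma$ follows from the same dichotomy applied with the trivial base $\C$ (the action on a point). A hypothetical weakly compact Cartan subalgebra $A \subset L\Gamma$ would have normalizer generating $L\Gamma$, so the dichotomy would yield either $A \preceq_{L\Gamma} \C$, which is impossible since $A$ is diffuse while $\C$ is finite-dimensional, or amenability of $L\Gamma$ and hence of $\Gamma$, again contradicting non-amenability. Both branches being absurd, no such $A$ exists.

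The work, and the main obstacle, lie entirely inside the two facts invoked as black boxes: that higher-rank lattices such as $\SL_d(\Z)$, $d \geq 3$, are properly proximal, and that crossed products by properly proximal groups obey the relative-amenability dichotomy for weakly compact subalgebras. The first demands genuinely new structure, since higher-rank lattices carry no negative curvature and fail both bi-exactness and weak amenability; the second is the technical heart, where the boundary-theoretic features encoded in proper proximality must be combined with Popa's intertwining-by-bimodules, the weak compactness hypothesis here playing the role that weak amenability plays in the bi-exact setting.
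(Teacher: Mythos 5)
Your proposal is correct and takes essentially the same route as the paper: proper proximality of $\SL_d(\Z)$ as a lattice (Proposition \ref{discrete}), an intertwining theorem for weakly compact subalgebras whose normalizer contains $L\Gamma$ (Theorem \ref{main tech}), and Popa's conjugacy criterion \cite[Theorem A.1]{Po01}, with the two cases $Q = L^\infty(X,\mu)$ and $Q = \C$ handled exactly as you do. The only difference is that the paper's Theorem \ref{main tech} yields $A \prec_M Q$ outright rather than as one branch of a Popa--Vaes-type dichotomy, so your extra step eliminating the relative-amenability alternative via non-amenability of $\Gamma$ is superfluous, though harmless (properly proximal groups are automatically non-amenable).
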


Combining Theorem \ref{toy thm} with \cite{Io11} we obtain that the following corollary.

\begin{cor}\label{profinite} Let $\sigma:\SL_d(\Z)\curvearrowright (X,\mu)$ and $\sigma':\SL_{d'}(\Z)\curvearrowright (X',\mu')$ be free ergodic profinite p.m.p. actions, for some $d,d'\geq 3$. If $L^{\infty}(X)\rtimes\SL_d(\Z)$ is isomorphic to $L^{\infty}(X')\rtimes\SL_{d'}(\Z)$, then $d=d'$ and the actions  $\sigma$ and  $\sigma'$ are virtually conjugate.
%For $d \neq d'$, the von Neumann crossed-products associated with a free profinite action of $\SL_d(Z)$ and any action of $\SL_{d'}(Z)$ are never isomorphic.
\end{cor}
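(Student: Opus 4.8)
The plan is to deduce from an isomorphism of the two crossed products that the underlying actions are orbit equivalent, and then to invoke the profinite cocycle superrigidity theorem of \cite{Io11} to upgrade orbit equivalence to virtual conjugacy (and, in particular, to force $d=d'$). First I would record that any profinite p.m.p.\ action is compact, hence weakly compact. Thus the Cartan subalgebra $L^\infty(X,\mu)\subset M:=L^\infty(X,\mu)\rtimes\SL_d(\Z)$ is weakly compact, and likewise $L^\infty(X',\mu')\subset M':=L^\infty(X',\mu')\rtimes\SL_{d'}(\Z)$. Since $d,d'\geq 3$, Theorem \ref{toy thm} applies to both crossed products and shows that $L^\infty(X,\mu)$ (resp.\ $L^\infty(X',\mu')$) is the unique weakly compact Cartan subalgebra of $M$ (resp.\ $M'$) up to unitary conjugacy.

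Next, suppose $\theta\colon M\to M'$ is a $*$-isomorphism. Weak compactness of a Cartan inclusion is an invariant of the isomorphism class of the inclusion, so $\theta(L^\infty(X,\mu))$ is again a weakly compact Cartan subalgebra of $M'$. By the uniqueness statement above there is a unitary $u\in M'$ with $u\,\theta(L^\infty(X,\mu))\,u^*=L^\infty(X',\mu')$. Replacing $\theta$ by $\Ad(u)\circ\theta$, I obtain an isomorphism of the two Cartan inclusions $L^\infty(X,\mu)\subset M$ and $L^\infty(X',\mu')\subset M'$; as this isomorphism is automatically trace-preserving between the two II$_1$ factors, Singer's theorem (equivalently \cite{FM77b}) identifies the associated orbit equivalence relations. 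Hence $\sigma$ and $\sigma'$ are orbit equivalent.

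Finally I would apply the main result of \cite{Io11}: since $\SL_d(\Z)$ and $\SL_{d'}(\Z)$ have property (T) and the actions are free, ergodic and profinite, orbit equivalence implies that the two groups are commensurable (up to finite kernels) and that $\sigma$ and $\sigma'$ are virtually conjugate. A virtual isomorphism of the lattices $\SL_d(\Z)$ and $\SL_{d'}(\Z)$ forces an isomorphism of their ambient simple Lie groups $\SL_d(\R)\cong\SL_{d'}(\R)$, which holds only when $d=d'$. The main obstacle is to ensure that the hypotheses of \cite{Io11} match ours exactly and that its conclusion is read off as virtual conjugacy rather than merely as cocycle untwisting; by contrast, the reduction from crossed-product isomorphism to orbit equivalence is routine once the Cartan uniqueness supplied by Theorem \ref{toy thm} is in hand.
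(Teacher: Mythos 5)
Your proof is correct and follows exactly the route the paper intends: the paper offers no written proof beyond the remark that the corollary follows by ``combining Theorem \ref{toy thm} with \cite{Io11}'', and your argument (profinite $\Rightarrow$ compact $\Rightarrow$ weakly compact, uniqueness of the weakly compact Cartan from Theorem \ref{toy thm} to pass from a $*$-isomorphism to an orbit equivalence via \cite{FM77b}, then Ioana's orbit-equivalence superrigidity for profinite actions of property (T) groups to get virtual conjugacy and $d=d'$) is precisely that combination, with the details filled in correctly.
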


\begin{rem}
For $d\geq 3$ and a non-empty set of primes $\mathcal P$, consider the left translation action of $\SL_d(\Z)$ on the compact group $K_{d,\mathcal P}:=\prod_{p\in\mathcal P}\SL_d(\Z_p)$ endowed with its Haar measure, where $\Z_p$ denotes the ring of $p$-adic integers. Corollary \ref{profinite} implies that  $L^{\infty}(K_{d,\mathcal P})\rtimes\SL_d(\Z)$ and $L^{\infty}(K_{d',\mathcal P'})\rtimes\SL_{d'}(\Z)$ are isomorphic  if and only if $(d,\mathcal P)=(d',\mathcal P')$.
\end{rem}

The proof of Theorem \ref{toy thm} is based on topological dynamics. Typically we use the dynamics of the actions of $\SL_d(\Z)$ on the projective space $\bP^{d-1}$ and other Grassmanian varieties. Note that these actions are neither topologically amenable nor small at infinity, as required in the definition of bi-exactness.
Instead, we exploit the fact that these actions do not admit invariant probability measures in combination with their proximality properties. To this end, we develop a general method to construct a nice compactification  (or rather, a piece of a compactification, see Definition~\ref{defn:piece}) of $\Gamma$ out of a continuous action on a compact space. We then use the framework developed in \cite{BC14} to exploit this compactification. 

It turns out that the above strategy applies to a much larger class of groups, which we call \emph{Properly Proximal Groups}.
Roughly speaking, we say that a group $\Gamma$ is properly proximal if it admits finitely many ``non-trivial'' continuous actions $\Gamma\actson K_i$ on compact spaces such that any sequence in $\Gamma$ admits a subsequence which is ``proximal'' for at least one of the actions $\Gamma\curvearrowright K_i$ (see Definition \ref{patched cv} for the precise definition). %Here, we call an action $\Gamma\curvearrowright K$ non-trivial if it does not admit an invariant probability measure, and say that a sequence $(g_n)$ in $\Gamma$ proximal if it pushes the whole orbit $\Gamma\cdot\mu$ of some given probability measure $\mu \in \Prob(K)$ towards a single measure on $K$. See Definition \ref{patched cv} for the precise statement.

Generalizing Theorem \ref{toy thm}, we prove the following result.

\begin{thm}\label{wc cartan}
Any properly proximal group $\Gamma$ satisfies the conclusions of Theorem \ref{toy thm}.
\end{thm}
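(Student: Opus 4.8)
The plan is to mirror the two-step structure of the bi-exact case, replacing its topological-dynamical input with proper proximality. Throughout, write $M = L^\infty(X,\mu) \rtimes \Gamma$ and let $A \subseteq M$ be a weakly compact Cartan subalgebra, so that its normalizer satisfies $\NN_M(A)\dpr = M$. The ``if'' direction of the equivalence is immediate: if $\sigma$ is weakly compact then $L^\infty(X,\mu)$ is by definition a weakly compact Cartan subalgebra, so such an $A$ exists. The content is the ``only if'' direction together with the uniqueness statement, and the assertion about $L\Gamma$, which I would treat as the degenerate case in which the base algebra is $\C$. First I would extract analytic data from weak compactness: by Definition \ref{WC} there is a net of unit vectors $(\eta_n)$ in the coarse bimodule $L^2(M)\ovt\overline{L^2(M)}$ that is asymptotically fixed by $u\ot\overline u$ for $u\in\UU(A)$ and asymptotically invariant under conjugation by $v\ot\overline v$ for $v$ in the normalizing unitaries of $A$. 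Since these unitaries generate $M$, a weak-$*$ limit produces a state $\varphi$ on $\mathbb B(L^2(M))$ that restricts to the trace on $M$, is $A$-central, and is $\Ad(M)$-invariant; this is precisely the hypertrace-with-symmetry that the comparison machinery consumes.

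Next I would invoke proper proximality to manufacture a boundary. By hypothesis $\Gamma$ admits finitely many continuous actions $\Gamma\actson K_i$ enjoying the patched proximal convergence property of Definition \ref{patched cv}. Out of these I would assemble a piece of a compactification of $\Gamma$ in the sense of Definition \ref{defn:piece}: a $\Gamma$-equivariant unital C$^*$-structure whose boundary records the proximal limits. The essential dynamical feature to extract is a substitute for smallness at infinity, namely that any sequence $g_n\to\infty$ in $\Gamma$ has a subsequence along which the transport maps on some $C(K_i)$ converge, in the point-weak-$*$ topology, to a Dirac-type (rank-one) unital positive map. This proximal collapsing is exactly what, in the bi-exact case, is furnished for free by the small-at-infinity boundary action.

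I would then feed $\varphi$ into the framework of \cite{BC14} relative to this compactification to obtain a dichotomy. Either the $A$-central vectors $(\eta_n)$ remain asymptotically supported near the diagonal copy of $L^\infty(X,\mu)$, in which case the standard translation between such concentration and Popa's intertwining-by-bimodules yields $A\prec_M L^\infty(X,\mu)$; or the proximal collapsing of Step 2 transports $\varphi$ to a $\Gamma$-invariant state on some $C(K_i)$, i.e.\ a $\Gamma$-invariant probability measure for $\Gamma\actson K_i$. The latter is excluded because a non-trivial proximal action admits no invariant probability measure, while proper proximality guarantees the pieces $K_i$ are non-trivial; hence we always land in the first alternative. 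Once $A\prec_M L^\infty(X,\mu)$ is established, the classical fact that two Cartan subalgebras which intertwine are unitarily conjugate upgrades this to $A = u\,L^\infty(X,\mu)\,u^*$ for some unitary $u\in M$, and tracing the weak compactness of $A$ through $u$ shows that $L^\infty(X,\mu)$ is weakly compact, i.e.\ that $\sigma$ is a weakly compact action. For $L\Gamma$ the same argument with base algebra $\C$ forces the diffuse algebra $A$ to satisfy $A\prec\C$, which is impossible; hence $L\Gamma$ carries no weakly compact Cartan subalgebra.

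The main obstacle is Step 3, and more precisely its interaction with Step 2. In the bi-exact setting the boundary action is globally topologically amenable and globally small at infinity, so a single C$^*$-algebraic relation governs all of $\Gamma$ at once. Here neither property holds: the actions $\Gamma\actson K_i$ are neither topologically amenable nor small at infinity, and proximality is available only subsequentially, possibly along different $K_i$ for different escaping sequences. The delicate point is therefore to run the comparison \emph{uniformly} across this patched family, so that the Dirac-type collapsing occurring on different actions for different sequences still organizes into a single invariant-state-versus-concentration dichotomy for $\varphi$. Making this patching compatible with the completely positive estimates underlying \cite{BC14}, while retaining the relative amenability supplied by weak compactness, is where the real work lies.
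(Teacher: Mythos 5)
Your proposal reproduces the correct high-level skeleton of the paper's argument: weak compactness yields a state $\varphi$ on $B(L^2(M))$ that is the trace on $M$ and $JMJ$, is $A$-central, and is $\Ad(uJuJ)$-invariant for normalizing unitaries $u$; one then plays this against proper proximality to force $A \prec_M L^\infty(X,\mu)$, and concludes with Popa's criterion that intertwining Cartan subalgebras are unitarily conjugate (the $L\Gamma$ statement being the case of trivial base). However, there is a genuine gap, and it is exactly the one you flag yourself in your final paragraph: you never explain how to run the argument \emph{uniformly} over the finitely many actions $\Gamma \actson K_i$, and without that the proof does not close. The paper's resolution is that this patching problem is solved \emph{before} any von Neumann algebra appears, by Theorem \ref{carac pp}: proper proximality (the patched, multi-action definition) is \emph{equivalent} to the single C$^*$-algebraic statement that there is no left-$\Gamma$-invariant state on $C(\partial\Gamma)^{\Gamma_r}$. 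Proving this equivalence is the real content you are missing: one transports states on a single piece $C(X_i)^{\Gamma_r}$ to invariant measures on $K_i$ via the ucp maps of Lemma \ref{lem: theta} (Lemma \ref{transfer}), one patches the pieces using the central support projections $q_i$ in the bidual $C(\partial\Gamma)^{**}$ with $\vee_i q_i = 1$ (implication $(i)\Rightarrow(iv)$), and one descends from the bidual back to $C(\partial\Gamma)^{\Gamma_r}$ by Ozawa's argument $(iv)\Rightarrow(iii)$, which needs Goldstine's theorem, passage to convex combinations, and quasi-central approximate units (Lemmas \ref{Banach} and \ref{quasicentral}). None of this is sketched or substituted for in your proposal.

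Two further points where your mechanism diverges from what actually works. First, once Theorem \ref{carac pp} is available, no ``Dirac-type collapsing'' or subsequential analysis is needed at all: in the paper's Theorem \ref{main tech} one assumes $A \nprec_M L^\infty(X,\mu)$, deduces from Lemma \ref{intertwining} that $\varphi$ vanishes on $L^\infty(X,\mu) \ot \K(\ell^2\Gamma)$, and then a Cauchy--Schwarz commutator estimate against the strongly dense subalgebra $C^*(JL^\infty(X,\mu)J, 1\ot\rho(\Gamma)) \subset JMJ$ shows $\varphi(u x) = \varphi(x u)$ for $x \in C^*(1 \ot B, \{u_g\})$, where $B = \pi^{-1}(C(\partial\Gamma)^{\Gamma_r}) \subset \ell^\infty(\Gamma)$; since $L\Gamma \subset \cN_M(A)\dpr$, the restriction of $\varphi$ to $1 \ot B$ is then left $\Gamma$-invariant and kills $c_0(\Gamma)$, producing exactly the forbidden state. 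Second, your plan to transport $\varphi$ directly to a $\Gamma$-invariant measure on ``some $K_i$'' cannot work as stated: a state living on the boundary algebra has no preferred piece $X_i$ to localize to (this is precisely what the bidual projection argument decides), and in the general definition of proper proximality the proximal limits $\lim_{g\to\omega} g\cdot\eta_i$ need not be point masses — proximality only asserts independence under right translation — so the ``rank-one'' collapsing you posit is not available outside special cases like the flag variety actions of $\SL_d(\Z)$.
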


Under the additional assumption that $\Gamma$ is weakly amenable, we obtain the following strengthening of Theorem~\ref{wc cartan}.

\begin{thm}\label{C-rigid} 
Let $\Gamma$ be a properly proximal, weakly amenable group. Then  $L\Gamma$ has no Cartan subalgebra and $\Gamma$ is $\cC$-rigid.
\end{thm}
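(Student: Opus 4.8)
The plan is to prove Theorem \ref{C-rigid} by bootstrapping from Theorem \ref{wc cartan}, upgrading the ``weakly compact'' hypothesis into an automatic one using weak amenability. The key point is that weak amenability provides the missing weak compactness: a result of Ozawa--Popa guarantees that if $\Gamma$ is weakly amenable with Cowling--Haagerup constant $\Lambda_{\mathrm{cb}}(\Gamma)=1$, then for any free ergodic p.m.p.\ action $\Gamma\actson(X,\mu)$, every amenable subalgebra of $L^\infty(X,\mu)\rtimes\Gamma$ (and in particular the abelian subalgebra $L^\infty(X,\mu)$ itself, as well as any Cartan subalgebra) is \emph{weakly compact} in the sense of Definition \ref{WC}. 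Thus under the standing hypotheses, the qualifier ``weakly compact'' in Theorem \ref{wc cartan} becomes vacuous, and any Cartan subalgebra is automatically weakly compact.

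\emph{$\cC$-rigidity.} First I would let $\Gamma\actson(X,\mu)$ be an arbitrary free ergodic p.m.p.\ action and let $A\subset L^\infty(X,\mu)\rtimes\Gamma =: M$ be any Cartan subalgebra. By weak amenability, $A$ is weakly compact in $M$. Theorem \ref{wc cartan} then applies to the weakly compact Cartan subalgebra $A$ and forces $A$ to be unitarily conjugate to $L^\infty(X,\mu)$. Since the action was arbitrary, this is precisely the statement that $\Gamma$ is $\cC$-rigid.

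\emph{No Cartan subalgebra in $L\Gamma$.} For the first assertion I would argue by contradiction. Suppose $A\subset L\Gamma$ is a Cartan subalgebra. Weak amenability again forces $A$ to be weakly compact. But the trivial action $\Gamma\actson(\{*\},\delta_*)$ exhibits $L\Gamma$ as the crossed product associated with a one-point probability space, which is \emph{not} a free ergodic action in the usual sense, so I would instead invoke the first sentence of Theorem \ref{toy thm}/Theorem \ref{wc cartan} directly: a properly proximal group has no weakly compact Cartan subalgebra in its group von Neumann algebra. Combined with weak compactness of $A$, this yields a contradiction, so $L\Gamma$ admits no Cartan subalgebra at all.

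The main obstacle, and the part requiring care, is citing the correct weak amenability input in the precise form needed: one must ensure that the Ozawa--Popa weak compactness mechanism applies to $L\Gamma$ as well as to the crossed product (the group von Neumann algebra is the crossed product over a point, but the standard statement concerns free ergodic actions), and that it produces weak compactness of the specific Cartan subalgebra rather than merely of amenable subalgebras in general. Once the weak compactness of every candidate Cartan subalgebra is established from weak amenability, the conclusion is a direct and essentially formal consequence of Theorem \ref{wc cartan}, with no further dynamical or analytic work needed.
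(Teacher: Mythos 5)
Your reduction has a genuine gap at its central step: you conflate weak amenability with the complete metric approximation property. The Ozawa--Popa weak compactness theorem you invoke requires $\Lambda_{\mathrm{cb}}(\Gamma)=1$ --- and you state it that way yourself --- but Theorem \ref{C-rigid} only assumes that $\Gamma$ is weakly amenable, i.e.\ $\Lambda_{\mathrm{cb}}(\Gamma)<\infty$. For weakly amenable groups with constant strictly bigger than $1$ (for instance cocompact lattices in $\mathrm{Sp}(n,1)$, $n\geq 2$, which are hyperbolic, hence bi-exact, non-amenable and properly proximal, and have $\Lambda_{\mathrm{cb}}=2n-1$), there is no result producing weak compactness of amenable or Cartan subalgebras: the Ozawa--Popa argument genuinely breaks down because the Herz--Schur multipliers no longer have cb-norm tending to $1$, and whether weak amenability with constant $>1$ implies weak compactness is, to the best of knowledge, open. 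So under the stated hypotheses the qualifier ``weakly compact'' in Theorem \ref{wc cartan} does \emph{not} become vacuous, and your argument only proves the theorem under the strictly stronger hypothesis $\Lambda_{\mathrm{cb}}(\Gamma)=1$. Your treatment of the $L\Gamma$ case and the reduction of $\cC$-rigidity to Theorem \ref{wc cartan} are otherwise fine, so what you have is a correct proof for CMAP groups, not for weakly amenable ones.

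This obstruction is exactly why the paper does not deduce Theorem \ref{C-rigid} from Theorem \ref{wc cartan}. Instead it proves Theorem \ref{C-rigid2}: for $M=Q\rtimes\Gamma$ with $\Gamma$ properly proximal and weakly amenable, any \emph{amenable} subalgebra $P\subset M$ with $L\Gamma\subset\cN_M(P)\dpr$ satisfies $P\prec_M Q$. The weak amenability input there is not weak compactness but Popa--Vaes \cite[Theorem 5.1]{PV12a}, which holds for any Cowling--Haagerup constant and supplies a net of normal states $\omega_i$ with almost-centrality and almost-invariance properties on an auxiliary algebra $N\ovt L\Gamma$. Since that theorem is adapted to tensor products, the paper first proves Proposition \ref{PV prop} (the statement for $B\ovt L\Gamma$) and then passes to crossed products via the comultiplication $\Delta(au_g)=au_g\ot u_g$. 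Proper proximality is then exploited exactly as in the proof of Theorem \ref{main tech}: the limiting state, shown to vanish on the compacts via Popa's intertwining criterion (Lemma \ref{intertwining}), restricts to a left-$\Gamma$-invariant state on the preimage of $C(\partial\Gamma)^{\Gamma_r}$, contradicting Theorem \ref{carac pp}. Finally, the Cartan statements follow from the intertwining conclusion together with Popa's conjugacy criterion \cite[Theorem A.1]{Po01}. In short, the missing idea in your proposal is the replacement of weak compactness by the Popa--Vaes state machinery (plus the comultiplication trick); without it, the theorem as stated is out of reach of the route through Theorem \ref{wc cartan}.
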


We devote a substantial part of the paper to study the class of properly proximal groups. In particular we prove the following results.

\begin{prop}
Groups in the following classes are properly proximal:
\begin{itemize}
\item Non-amenable bi-exact groups;
\item Non-elementary convergence groups;
\item Lattices in non-compact semi-simple Lie groups of arbitrary rank;
\item Groups admitting a proper cocycle into a non-amenable representation.
\end{itemize}
Moreover, the class of properly proximal groups is stable under commensurability up to finite kernels and under direct products. In contrast, properly proximal groups are never inner amenable, and therefore no infinite direct product of non-trivial groups is properly proximal.
\end{prop}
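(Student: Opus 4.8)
The plan is to treat the four ``positive'' classes by exhibiting, in each case, an explicit finite family of continuous actions $\Gamma\actson K_i$ and checking the two defining requirements of Definition~\ref{patched cv}: non-triviality (absence of an invariant probability measure, which will always reduce to non-amenability) and the patched proximality of arbitrary sequences. The common principle is that each class carries a natural ``boundary'' action on which elements going to infinity contract measures toward point masses. I would then establish the permanence properties by manipulating these families of actions, and finally prove the non-inner-amenability statement by a push-forward argument, which I expect to be the main obstacle.

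For non-amenable bi-exact groups I would use a single action: the boundary $\partial\Gamma$ of a small-at-infinity, topologically amenable compactification provided by bi-exactness. The small-at-infinity condition says precisely that the right translation action is trivial on $\partial\Gamma$, and this translates into proximality of every sequence $g_n\to\infty$ on $\Prob(\partial\Gamma)$; non-amenability of $\Gamma$ rules out an invariant measure. For non-elementary convergence groups the limit set $K$ works in the same way: the convergence property gives, after passing to a subsequence, source--sink dynamics $g_{n}\to a$ uniformly off the repelling point, which is exactly strong proximality, while non-elementarity excludes an invariant measure. The delicate case is that of a lattice $\Gamma<G$ in a semi-simple Lie group of arbitrary rank. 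Here I would take the finitely many maximal (partial) flag varieties $K_\alpha=G/P_\alpha$, one for each simple root $\alpha$ (for $\SL_d$ these are $\bP^{d-1}$ and the Grassmannians), each carrying no invariant probability measure because $G$ is non-amenable. Writing a sequence $g_n\to\infty$ in $\Gamma\subset G$ in its Cartan form $g_n=k_na_nl_n$ with $a_n=\exp(H_n)$, $H_n\in\overline{\mathfrak a^+}$, one passes to a subsequence along which $k_n\to k$, $l_n\to l$ and, since $\|H_n\|\to\infty$, at least one simple root satisfies $\alpha(H_n)\to\infty$; the element $a_n$ then contracts $G/P_\alpha$ toward the base point, and hence $g_n$ is proximal on $K_\alpha$. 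This is precisely the point at which the \emph{finiteness} of the family is essential and where higher rank differs from rank one. For a proper cocycle $b\colon\Gamma\to\HH$ into a non-amenable orthogonal representation $\pi$, I would feed the affine action $g\cdot v=\pi(g)v+b(g)$ into the compactification machine of Definition~\ref{defn:piece}: since $\pi(g)$ is isometric and $\|b(g)\|\to\infty$, the rescaled images $\|b(g_n)\|^{-1}(\pi(g_n)v+b(g_n))$ converge weakly to the weak limit of $b(g_n)/\|b(g_n)\|$, independently of $v$, giving proximality, and non-amenability of $\pi$ guarantees the resulting action is non-trivial.

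For the permanence properties I would argue at the level of the actions. If $\Lambda<\Gamma$ has finite index, restricting each $\Gamma\actson K_i$ to $\Lambda$ works, since a sequence escaping to infinity in $\Lambda$ escapes in $\Gamma$, and a $\Lambda$-invariant measure would average to a $\Gamma$-invariant one; conversely, from a properly proximal finite-index subgroup one induces the actions to $\Gamma$, the finiteness of the index reducing any sequence to the subgroup after passing to a subsequence. Factoring by a finite normal subgroup is handled similarly, the finite kernel being invisible at infinity. For a product $\Gamma_1\times\Gamma_2$ of properly proximal groups, I would take the union of the two families, letting $\Gamma_1\times\Gamma_2$ act on each $K_i^{(1)}$ and $K_j^{(2)}$ through the corresponding projection; since any sequence tending to infinity in the product has a coordinate tending to infinity, the proper proximality of that factor supplies a proximal subsequence, which remains proximal for the inflated action (the other factor acting trivially), and non-triviality is inherited from the active factor.

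The negative statement is where I expect the real work. Suppose $\Gamma$ is inner amenable; then there is a conjugation-invariant mean on $\Gamma$ not supported at the identity, which one may take to be concentrated at infinity after a preliminary reduction controlling the finite conjugacy classes. The plan is to transport this mean, through the pieces of compactification attached to the actions $K_i$, to a probability measure on some $\Prob(K_i)$, using that the conjugation action of $\Gamma$ is intertwined \emph{at infinity} with the left action on the $K_i$; the patched proximality then forces this measure to be supported on point masses and, combined with conjugation-invariance, to be $\Gamma$-invariant, contradicting non-triviality of the $K_i$. Making precise the intertwining ``at infinity'' and the passage from a proximal mean to an invariant measure is the crux, and should follow the framework of Definition~\ref{defn:piece}. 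Once non-inner-amenability is known, the final assertion is immediate: an infinite direct sum $\bigoplus_{i\geq 1}\Gamma_i$ of non-trivial groups is inner amenable, since probability vectors supported on coordinates beyond the $n$-th escape to infinity and are fixed by conjugation by any fixed element once $n$ is large; hence such a group cannot be properly proximal.
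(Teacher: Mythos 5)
Your action-level treatments of direct products and of restriction to finite-index subgroups are correct and essentially the paper's own (Proposition~\ref{stability}), your lattice dynamics (Cartan decomposition, contraction of $G/P_\alpha$ when a simple root blows up) is exactly the mechanism of Lemma~\ref{simple case}, and your bi-exact argument works if you act on the whole small-at-infinity compactification with $\eta=\delta_e$ rather than on its boundary with an unspecified measure. Interestingly, the non-inner-amenability step you single out as ``the crux'' is in fact the easiest part: your push-forward plan can be completed directly (the conjugation-invariant mean is a measure on $\Delta\Gamma$ concentrated on $\partial\Gamma$; some piece $\partial_{\eta_i}\Gamma$ has positive mass; on that piece the extended orbit map $\omega\mapsto\lim_{g\to\omega}g\cdot\eta_i$ is left-equivariant and right-invariant, hence intertwines conjugation with the left action; pushing forward and taking barycenters gives a $\Gamma$-invariant measure on $K_i$ --- no reduction to point masses is needed, nor available, since proximal limits need not be Dirac masses), and the paper's own proof (Proposition~\ref{inner amenable}) is even shorter: restricted to right-invariant functions, conjugation-invariance \emph{is} left-invariance.

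The genuine gaps all stem from never invoking the paper's central tool, Theorem~\ref{carac pp} (proper proximality $\Leftrightarrow$ no left-$\Gamma$-invariant state on $C(\partial\Gamma)^{\Gamma_r}$). (1) \emph{Cocycles:} your rescaled maps $\|b(g_n)\|^{-1}(\pi(g_n)v+b(g_n))$ do not assemble into a $\Gamma$-action on any compact space --- the rescaling is sequence-dependent, the affine action preserves no weakly compact set, and the visual compactification of an infinite-dimensional Hilbert space is not compact --- so the data required by Definition~\ref{patched cv} is never produced. The paper instead uses the array machinery (Lemma~\ref{array piece}) to get an equivariant u.c.p.\ map $B(H)\to C(\partial\Gamma)^{\Gamma_r}$, so that a left-invariant state would make $\pi$ amenable; converting this into proper proximality is precisely the implication $(iii)\Rightarrow(ii)$ of Theorem~\ref{carac pp}. (2) \emph{Commensurability, hard directions} (finite-index subgroup $\Rightarrow$ ambient group; group $\Rightarrow$ quotient by finite kernel): co-induction fails as sketched. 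Writing $g_n=\lambda_n s_j$ with $\lambda_n\in\Lambda$, proximality of the co-induced action on ${\rm Map}_\Lambda(\Gamma,K_i)$ requires \emph{all} conjugates $s_k\lambda_n s_k^{-1}$ to be proximal on the \emph{same} piece $K_i$, whereas proper proximality of $\Lambda$ only provides, for each $k$, proximality on some piece $K_{i(k)}$ with possibly different indices; since $\Gamma$ permutes the coordinates transitively, one cannot mix pieces coordinate-wise. The paper circumvents this with equivariant u.c.p.\ maps and conditional expectations between $C(\partial\Lambda)^{\Lambda_r}$, $C(\partial\Gamma)^{\Lambda_r}$ and $C(\partial\Gamma)^{\Gamma_r}$, again through the state criterion. (3) \emph{Lattices:} the absence of a $\Gamma$-invariant measure on $G/P_\alpha$ is \emph{not} ``because $G$ is non-amenable''; non-amenability of $G$ only excludes $G$-invariant measures. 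For the lattice one needs Furstenberg's lemma / Borel density (Zariski density plus unboundedness of the projections of $\Gamma$), which is what the paper cites (\cite{Fu76}, \cite[Corollary 3.2.19]{Zi84}). Relatedly, both there and in the convergence-group case the measures must be chosen to annihilate the exceptional sets (proper subvarieties, atoms at repelling points): the paper secures this by taking Haar-type measures on $G/P_\alpha$ and, for convergence groups, a diffuse measure on the limit set, whose existence is itself a lemma (Lemma~\ref{diffuse measure}).
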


We conclude the introduction with several questions on properly proximal groups.

\begin{que}
\begin{enumerate}[(a)]
\item Are mapping class groups properly proximal? What about outer automorphism groups $\Out(\F_n)$ of the free groups?
\item More generally, is the class of properly proximal groups invariant under measure equivalence? 
\item Is there a non inner amenable group which is not properly proximal? As we discuss in Section \ref{linear}, we suspect that  finitely generated linear groups are properly proximal if and only if they are not inner amenable.
\end{enumerate}
\end{que}

%\textcolor{red}{JP:}Further groups to consider: Acylindrically hyperbolic groups (mapping class groups, Out$(\mathbb F_n)$, Coxeter groups), Thompson's groups $T$, $V$, Monod's group of piecewise $SL_2(\mathbb R)$ transformations, groups with positive first $\ell^2$-Betti number, groups with non-zero harmonic cocycles into mixing representations, Kac-Moody groups, Baumslag-Solitar groups.  
%\textcolor{blue}{RB} Baumslag solitaire groups are inner amenable by a result of Stalder from 2006.

\subsection{Organization of the paper}
Apart from the introduction, this paper contains five other sections. Section 2 sets the notations and gives some preliminary facts. In Section 3, we develop the notion of a boundary piece and give the main constructions from dynamical systems and from cocycles. % (and more general arrays). 
In Section 4, we define and study properly proximal groups and prove the main results cited above.
Then, in Section 5 we show that boundary pieces may be used to define a notion of ``directional bi-exactness'', and generalize \cite[Section 15]{BO08} to this setting. Some concrete applications to the von Neumann algebras of $\SL_d(\Z)$ are given in Section 6.

\subsection{Acknowledgements}
The first named author is grateful to Jean-Fran\c{c}ois Quint for an inspiring discussion on compactifications of symmetric spaces and for many helpful discussions on algebraic groups over local fields. Parts of this project have been done during the conference ``Von Neumann algebras and measured group Theory'' held at the Institut Henri Poincar\'e, Paris, in July 2017, the workshop ``Classification of group von Neumann algebras'' held at the American Institute of Mathematics, San Jose, CA, in January 2018 and the workshop ``Approximation Properties in Operator Algebras and Ergodic Theory'' held at the Institute for Pure and Applied Mathematics, UCLA, CA, during the 2018 spring trimester ``Quantitative Linear algebra''. We thank these institutes for their hospitality and the organizers for the invitations to these events. We warmly thank Narutaka Ozawa for sharing Theorem \ref{carac pp} with us and for useful comments.

%%%%%%%%%%
\section{Notation}

\subsection*{Groups, actions and representations} 
The symbol $\Gamma$ will always refer to a discrete group. Given such a group, we denote by $\lambda$ and $\rho$ its left and right regular representations, respectively, both of them acting on $\ell^2\Gamma$. The canonical basis of $\ell^2\Gamma$ will be denoted by $\{\delta_g\}, g \in \Gamma$.
We will denote by $\Delta\Gamma$ the Stone-\v{C}ech compactification of $\Gamma$ and by $\partial\Gamma := \Delta \Gamma \setminus \Gamma$ its Stone-\v{C}ech boundary.

When considering a compact space $K$ we will denote by $\Prob(K)$ the set of regular Borel probability measures on $K$. We will often consider actions $\Gamma \actson K$. Implicitly we assume that such actions are continuous. Such actions naturally induce other actions $\Gamma \actson \Prob(K)$ and $\Gamma \actson C(K)$. The latter is an action by automorphisms on the C*-algebra of continuous functions on $K$.

The following elementary lemma will be needed later on.

\begin{lem}\label{diffuse measure}
A compact Hausdorff space carries a diffuse Borel probability measure if and only if it contains a perfect set.
\end{lem}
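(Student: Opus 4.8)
The plan is to prove the two implications separately; in each direction the decisive tool is compactness.

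For the ``only if'' direction I would argue via the support. Assume $\mu$ is a diffuse regular Borel probability measure on $K$, and set $S:=\supp(\mu)$, the complement of the union of all $\mu$-null open sets. I claim $S$ is the desired perfect set. First, $S\neq\emptyset$: otherwise $K$ would be covered by $\mu$-null open sets, and a finite subcover would force $\mu(K)=0$. Next, inner regularity gives $\mu(S)=1$, since any compact subset of $S^c$ is covered by finitely many null open sets and hence is null, so $\mu(S^c)=0$; consequently $S$ is the \emph{smallest} closed set of full measure. Finally $S$ has no isolated point: if $x$ were isolated in $S$, then $S\setminus\{x\}$ would be a strictly smaller closed set with $\mu(S\setminus\{x\})=\mu(S)-\mu(\{x\})=1-0=1$, where diffuseness supplies $\mu(\{x\})=0$, contradicting minimality. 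Hence $S$ is perfect.

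For the ``if'' direction, let $P\subseteq K$ be perfect. I would build a Cantor scheme inside $P$: open sets $U_s\subseteq K$ indexed by finite binary strings $s$, with $U_s\cap P\neq\emptyset$ and with $\overline{U_{s0}},\overline{U_{s1}}$ disjoint and contained in $U_s$. This is possible because every nonempty relatively open subset of $P$ contains at least two points (no isolated points), which can be separated by disjoint open sets (Hausdorff) and then shrunk so their closures lie inside $U_s$ (a compact Hausdorff space is regular). Set $C:=\bigcap_n\bigcup_{|s|=n}\overline{U_s}$, a nonempty compact set, and send each $x\in C$ to the unique branch it follows; this defines a continuous surjection $\pi\colon C\to\{0,1\}^{\N}$, since the preimages of cylinders are clopen in $C$. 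Because $\pi$ is a continuous surjection of compact spaces, $\pi^*\colon C(\{0,1\}^{\N})\to C(C)$ is isometric, so Hahn--Banach followed by the Riesz representation theorem produces a Borel probability measure $\mu$ on $C$ with $\pi_*\mu=\nu$, where $\nu$ is the fair-coin (Bernoulli) measure. Regarding $\mu$ as a measure on $K$, it is diffuse: for $x\in C$ one has $\mu(\{x\})\le\mu(\pi^{-1}(\{\pi(x)\}))=\nu(\{\pi(x)\})=0$.

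The main obstacle is that $K$ need not be metrizable. In the ``if'' direction this means one cannot arrange the scheme to have shrinking diameters, so the nested closures $\bigcap_n\overline{U_{\omega|n}}$ need not be singletons and there is no natural injection \emph{into} $K$ to push a measure forward along. I sidestep this by working in the other direction, through the continuous surjection $\pi$ \emph{onto} the Cantor set together with the measure-lifting step, which never uses a metric. The analogous subtle point in the ``only if'' direction is that, a priori, the support of a Radon measure on a general compact Hausdorff space might not visibly carry full mass; this is precisely where inner regularity combined with compactness is essential.
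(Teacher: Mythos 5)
Your proof is correct and takes essentially the same route as the paper: the support of the measure handles the forward direction, and for the converse both arguments build a binary tree of disjoint compact sets, obtain a continuous surjection of its intersection onto $\{0,1\}^{\mathbb{N}}$, and pull back a diffuse measure by extending the corresponding state via Hahn--Banach and applying Riesz representation. The only cosmetic difference is that you index the tree by open sets with nested disjoint closures meeting $P$, whereas the paper works directly with closed perfect subsets; this changes nothing substantive.
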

\begin{proof}
The support of a diffuse measure is obviously a perfect set. Conversely, assume that $X$ contains a perfect set $Y$. Then after replacing $X$ by $\bar{Y}$, we may assume that $X$ itself is perfect. Observe that any perfect compact Hausdorff space contains two non-empty disjoint closed subsets which are again perfect. 
Indeed, this can be deduced from the following fact: if $U$ is open subset of a perfect set $X$, then $\bar{U}$ is perfect.
Therefore we may construct by induction a family of closed subsets $K_{i,j}$, $i \geq 0$, $1 \leq j \leq 2^i$ such that for every $i$, $K_{i,j_1}$ and $K_{i,j_2}$ are disjoint whenever $j_1 \neq j_2$ and such that $K_{i+1,2j} \cup K_{i+1,2j+1} \subset K_{i,j}$ for all $i \geq 0$, $1 \leq j \leq 2^i$.
For all $i$, we denote by 
\[K_i := \bigcup_{j = 1}^{2^i} K_{i,j} \text{ and } K := \bigcap_{i} K_i.\]
Since $K_i$ is a decreasing sequence of compact sets, $K$ is non-empty. Define a map $\pi: K \to \{0,1\}^\N$ by the formula $\pi(x)_i := j \mod 2$, where $j$ is the unique index such that $x \in K_{i,j}$. By the construction of $K$ this map is onto and continuous. So we may pull back any diffuse measure $\nu$ on $\{0,1\}^\N$ to a measure on $K$, which will  also be diffuse. Specifically, we define a state $\psi:C(\{0,1\}^\N)\rightarrow\mathbb C$  by $\psi(f)=\int f\;\text{d}\nu$. Viewing $C(\{0,1\}^\N)$ as a C$^*$-subalgebra of $C(K)$, we extend $\psi$ to a state $\varphi:C(K)\rightarrow\mathbb C$. The measure $\mu$ on $K$ given by $\varphi(f)=\int f\;\text{d}\mu$, for every $f\in C(K)$, satisfies $\pi_*\mu=\nu$. Since $\nu$ is diffuse, $\mu$ is also diffuse. 
\end{proof}

We will often consider unitary representations $(\pi,H)$ of $\Gamma$. We will denote by $(\opi,\oH)$ the conjugate representation, that is, $\oH$ is the complex conjugate Hilbert space of $H$ and the representation $\opi$ is such that $\opi(g)(\oxi) = \overline{\pi(g)(\xi)}$ for all $g \in \Gamma$, $\xi \in H$.
Also following \cite{Be90} we say that a representation $(\pi,H)$ is amenable if there exists an $\Ad(\pi(\Gamma))$-invariant state on $B(H)$. This is equivalent to the representation $\pi \ot \opi$ having almost invariant vectors \cite[Theorem 5.1]{Be90}.

\subsection*{Von Neumann algebras} A tracial von Neumann algebra is a pair $(Q,\tau)$  consisting of a von Neumann algebra $Q$ and a tracial faithful normal trace $\tau:Q\rightarrow\mathbb C$. We consider the corresponding 2-norm given by $\Vert x \Vert_2=\sqrt{\tau(x^*x)}$, $x \in M$ and denote by $L^2(Q)$ the associated GNS Hilbert space.  If $P \subset Q$ is a von Neumann subalgebra, we denote by $E_P: Q \to P$ the trace-preserving conditional expectation and by $e_P \in B(L^2(Q))$ the orthogonal projection onto $L^2(P) \subset L^2(Q)$. Jones' basic construction is denoted by $\langle Q,e_P\rangle$.

We write $\mathcal U(Q)$ the group of unitaries of $Q$. For any set $S\subset Q$  is which closed under the adjoint $*$-operation, we denote by $S\dpr$ its double commutant, which, by von Neumann's double commutant theorem is precisely the von Neumann algebra generated by $S$.

The von Neumann algebra of a discrete group $\Gamma$ is denoted by $L\Gamma = \lambda(\Gamma)\dpr\subset B(\ell^2(\Gamma))$. We endow $L\Gamma$ with the canonical trace $\tau: x \mapsto \langle x\delta_e,\delta_e\rangle$, making it a tracial von Neumann algebra. The canonical unitaries $\lambda(g) \in L\Gamma$ will be often denoted by $u_g$, $g \in \Gamma$.

Inside $B(\ell^2(\Gamma))$, we also consider the abelian von Neumann algebra $\ell^\infty(\Gamma)$, acting by pointwise multiplication.

Given a representation $(\pi,H)$ of $\Gamma$ we will consider the corresponding $L\Gamma$-bimodule, whose underlying Hilbert space is  $H \ot \ell^2(\Gamma)$ and the left and right actions are characterized by the formula
\[ u_s \cdot  (\xi \ot \delta_g) \cdot u_t = \pi(s)(\xi) \ot \delta_{sgt}, \text{ for all } g,s,t \in \Gamma, \xi \in H.\]

We recall two notions which will play a key roles in our main results and their proofs: weak compactness and Popa's intertwining-by-bimodules.

\begin{defn}\cite{OP10a}\label{WC}
A trace preserving action $\sigma:\Gamma \actson (Q,\tau)$ on a tracial von Neumann algebra $(Q,\tau)$ is \emph{weakly compact} if there exists a state $\varphi$ on $B(L^2(Q))$ such that $\varphi_{|Q}=\tau$ and $\varphi\circ\text{Ad}(u)=\varphi$, for every $u\in\mathcal U(Q)\cup\sigma(\Gamma)$.
A regular inclusion of tracial von Neumann algebras $Q \subset M$ is weakly compact if  the action $\cN_M(Q) \actson Q$ is weakly compact, where $\cN_M(Q)=\{u\in\mathcal U(M)\mid uQu^*=Q\}$ denotes the normalizer of $Q$ in $M$.
\end{defn}

The following definition/theorem is due to Popa \cite{Po01,Po03}. 

\begin{defn}\label{intertwining}
Consider a tracial von Neumann algebra $M$ with two von Neumann subalgebras $P, Q \subset M$. We say that a corner of $P$ embeds into $Q$ inside $M$, and write $P \prec_M Q$ if one of the following equivalent statements holds.
\begin{enumerate}[(i)]
\item There exists projections $p \in P$, $q \in Q$, a $*$-homomorphism $\varphi: pPp \to qQq$ and a non-zero element $v \in qMp$ such that $\varphi(x)v = vx$ for all $x \in pPp$;
\item There exists no net of unitaries  $(u_n) \subset \cU(P)$ such that $\lim_n \Vert E_Q(xu_ny)\Vert_2 = 0$ for all $x,y \in M$;
\end{enumerate}
\end{defn}

In practice we will use the following characterization, which comes from \cite[Lemma 3.3]{OP10b} (see also \cite[Theorem 2]{BH16} for a general version).

\begin{lem}\label{intertwining}
Consider a tracial von Neumann algebra $M$ with two von Neumann subalgebras $P, Q \subset M$. Then $P \prec_M Q$ if and only if there exists a $P$-central state $\Phi: \langle M,e_Q\rangle \to \C$ which is normal on $M$ and does not vanish on $Me_QM$.
\end{lem}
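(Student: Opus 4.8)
The plan is to prove the equivalence in Lemma~\ref{intertwining}, which characterizes Popa's intertwining $P\prec_M Q$ via the existence of a suitable $P$-central state on the basic construction $\langle M,e_Q\rangle$. I would proceed by showing the contrapositive in both directions, connecting the dynamical condition in Definition~\ref{intertwining}(ii) to the functional-analytic statement about states.

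\textbf{From the state to intertwining.} First I would assume such a $P$-central state $\Phi:\langle M,e_Q\rangle\to\C$ exists, normal on $M$ and nonvanishing on $Me_QM$, and deduce $P\prec_M Q$. Suppose for contradiction that $P\not\prec_M Q$, so by condition (ii) there is a net $(u_n)\subset\cU(P)$ with $\|E_Q(xu_ny)\|_2\to 0$ for all $x,y\in M$. The key computation is that $\tau(u_n^* m_1 e_Q m_2 u_n)$ involves $E_Q$-moments that vanish in the limit, so on the dense subspace $Me_QM\subset\langle M,e_Q\rangle$ one has $\langle u_n T u_n^*,\,\cdot\,\rangle\to 0$ weakly for $T\in Me_QM$. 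Since $\Phi$ is normal on $M$ (hence $\Phi(u_n\,\cdot\,u_n^*)$ is controlled by the $u_n$ acting by the $P$-central property), $P$-centrality gives $\Phi(T)=\Phi(u_n T u_n^*)$ for all $n$, and the limit forces $\Phi(T)=0$ for every $T\in Me_QM$, contradicting the non-vanishing hypothesis.

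\textbf{From intertwining to the state.} Conversely, assuming $P\prec_M Q$, I would invoke condition (ii): there is no net $(u_n)\subset\cU(P)$ with $\|E_Q(xu_ny)\|_2\to 0$ for all $x,y$. Translating this, the net of vectors $\hat u_n=u_n e_Q$ (or rather the positive functionals $\omega_n(T)=\langle T\hat u_n,\hat u_n\rangle$ on $\langle M,e_Q\rangle$) does \emph{not} converge weakly to $0$ on $Me_QM$; equivalently $\inf_n\|E_Q(u_n)\|_2>0$ along any net, uniformly after averaging. I would then consider the weak$^*$-limit point $\Phi$ of the states $T\mapsto\langle T\,\widehat{u_n},\widehat{u_n}\rangle$ in the dual of $\langle M,e_Q\rangle$. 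The uniform non-degeneracy ensures $\Phi$ does not vanish on $Me_QM$; normality on $M$ follows because each $\omega_n$ restricts to a vector state on $M$ and these are uniformly controlled; and $P$-centrality follows from the approximate invariance $u\,\widehat{u_n}=\widehat{uu_n}$ together with the fact that $(uu_n)$ and $(u_n)$ are reindexings of the same net, so the limit state is unchanged under $\Ad(u)$ for $u\in\cU(P)$.

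\textbf{Main obstacle.} The delicate point is the passage from the pointwise condition ``$\|E_Q(xu_ny)\|_2\to 0$ for all $x,y$'' to a \emph{uniform} statement strong enough to produce a limit state that is both normal on $M$ and genuinely nonzero on $Me_QM$. A naive weak$^*$-limit of the $\omega_n$ may fail normality or may vanish on the relevant corner. The standard remedy, which I expect to be the crux, is to pass to convex combinations: by a Hahn--Banach/separation argument one shows that if no single net works, then $0$ lies in the weak closure of the relevant orbit, and one extracts instead a net in the convex hull whose limit is normal on $M$ (since the $P$-central states restricted to $M$ form a convex set with the right compactness), nonvanishing by the failure of condition (ii). This separation-of-convex-sets step is precisely where one must be careful, and it is the heart of the equivalence as established in \cite[Lemma 3.3]{OP10b}.
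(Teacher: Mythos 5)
The paper itself gives no proof of this lemma (it imports it from \cite[Lemma 3.3]{OP10b}, see also \cite[Theorem 2]{BH16}), so I am judging your argument on its own terms; unfortunately both directions have a genuine gap, and in each case the gap sits exactly where the content of the lemma lies.

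In ``from the state to intertwining,'' your opening computation is fine: for $T=xe_Qy$ one has $\langle u_nxe_Qyu_n^*\hat a,\hat b\rangle=\tau\bigl(b^*u_nxE_Q(yu_n^*a)\bigr)$, which is bounded by $\|x\|\|b\|\,\|E_Q(yu_n^*a)\|_2\to 0$, so $u_nTu_n^*\to 0$ $\sigma$-weakly. But the next step, ``the limit forces $\Phi(T)=0$,'' requires $\Phi(u_nTu_n^*)\to 0$, i.e.\ $\sigma$-weak continuity of $\Phi$ on $\langle M,e_Q\rangle$. The hypothesis only gives normality on $M$, whereas $u_nTu_n^*$ lies in $Me_QM$ and converges to $0$ weakly but not in norm; a state that is merely norm-continuous on $\langle M,e_Q\rangle$ need not pass to this limit, and your parenthetical justification is circular. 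The gap is not cosmetic: if you try to repair it via Cauchy--Schwarz (reducing to $T=xe_Qx^*$) and the splitting $u_nx=E_Q(u_nx)+b_n$, normality on $M$ kills every term except $\Phi(b_ne_Qb_n^*)$ with $E_Q(b_n)=0$, and nothing in your hypotheses makes that term small. The known proofs resolve precisely this point: one first cuts $\Phi$ to its part supported on the ideal $\cK=\overline{Me_QM}$ (this preserves $P$-centrality, normality on $M$, and non-nullity), and then one proves that a state supported on $\cK$ and normal on $M$ is \emph{automatically normal on all of} $\langle M,e_Q\rangle$, using that $e_QTe_Q=\Psi(T)e_Q$ for a normal map $\Psi:\langle M,e_Q\rangle\to Q$ together with the domination $y^*qe_Qy\le y^*qy\in M$ for $q\in Q_+$. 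Only after this upgrade does your weak-convergence argument (equivalently, taking spectral projections of the density operator $d\in P'\cap L^1(\langle M,e_Q\rangle,\Tr)$) finish the proof. That upgrade is the heart of the lemma and is absent from your proposal.

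The direction ``from intertwining to the state'' is structurally off. The failure of condition (ii) does not hand you a distinguished net $(u_n)$ whose vector states $\langle\,\cdot\,\hat u_n,\hat u_n\rangle$ you can pass to the limit with, and the claim that $P$-centrality follows because ``$(uu_n)$ and $(u_n)$ are reindexings of the same net'' is false: they are not, and the two weak$^*$ limits can genuinely differ. Worse, securing non-vanishing on $Me_QM$ is in direct tension with forcing $P$-centrality: the constant net $u_n=1$ gives $\Phi(e_Q)=1$ but no centrality, while any averaging designed to gain centrality can push all the mass of the state off $\cK$; negotiating this tension is exactly where the hypothesis $P\prec_MQ$ must be used, and your sketch never uses it. The standard (and short) proof goes through condition (i) instead: the intertwiner $v$ gives the positive element $d_0=v^*e_Qv$, which commutes with $pPp$ and has $\Tr(d_0)=\tau(vv^*)<\infty$; after the routine upgrade to a $P$-central finite-trace positive element $d\in P'\cap\langle M,e_Q\rangle$ (Popa), the state $\Phi=\Tr(d\,\cdot)/\Tr(d)$ is normal on all of $\langle M,e_Q\rangle$, is $P$-central since $d\in P'$, and cannot vanish on $Me_QM$ because $Me_QM$ is weakly dense in $\langle M,e_Q\rangle$ (the central support of $e_Q$ is $1$) and a normal state cannot vanish on a weakly dense $*$-subalgebra.
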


%%%%%%%%%%
\section{Boundary pieces of  discrete groups}

\begin{defn}\label{defn:piece}
Given a discrete group $\Gamma$, a {\it boundary piece} is a closed subset $X \subset \partial\Gamma$ which is invariant under the left and right $\Gamma$-actions.
\end{defn}

\begin{note}\label{nota1}
Given a discrete group $\Gamma$ and a boundary piece $X \subset \partial \Gamma$ we define the ideal $I_0(X) \subset \ell^\infty(\Gamma) \simeq C(\Delta\Gamma)$ consisting of functions that vanish on $X$. We also denote by $p_X \in {\ell^\infty(\Gamma)}^{**}$ the support projection of this ideal, namely $p_X$ is the unit of ${I_0(X)}^{**}$ inside ${\ell^\infty(\Gamma)}^{**}$. For convenience we will denote by $q_X = 1-p_X$, so that $q_X \ell^\infty(\Gamma) \simeq \ell^\infty(\Gamma)/I_0(X) \simeq C(X)$. It is easily seen that $q_X = \mathbf 1_X$, the indicator function.
\end{note}

Since $X$ is left and right $\Gamma$-invariant, so is $I_0(X)$ and hence $\lambda_g q_X\lambda_g^* = \rho_g q_X\rho_g^* = q_X$ for all $g \in \Gamma$. These equalities are meant inside ${B(\ell^2\Gamma)}^{**}$ (which contains ${\ell^\infty(\Gamma)}^{**}$).

Let us give a first example of a boundary piece, taken from \cite[Section 15.1]{BO08}.

\begin{examp}\label{relative piece}
Take a discrete group $\Gamma$ and a family $\cG$ of subgroups of $\Gamma$. A subset $\Omega \subset \Gamma$ is {\it small relative} to $\cG$ if it is contained in a finite union of sets of the form $s\Lambda t$ with $s,t \in \Gamma$, $\Lambda \in \cG$. The C*-subalgebra $c_0(\Gamma,\cG) \subset \ell^\infty(\Gamma)$ generated by functions whose support is a small set in $\Gamma$ is an ideal in $\ell^\infty(\Gamma)$ which is globally left and right $\Gamma$-invariant and contains $c_0(\Gamma) $. In particular, it is of the form $I(X)$ for some boundary piece $X = X(\cG) \subset \partial\Gamma$.
Alternatively, $X(\cG)$ is described as the intersection of all $\overline{\Delta\Gamma \setminus \Omega}$, where $\Omega$ ranges over all sets that are small relative to $\cG$.
\end{examp}

In \cite[Section 15]{BO08}, the ideal $c_0(\Gamma,\cG)$ is used to construct an ideal of ``relatively compact" operators of $B(\ell^2(\Gamma))$. % shown to be related to an ideal of compact operators. 
We will do the same here for general boundary pieces. This will allow us later to generalize the notion of relative bi-exactness to our setting.

%%%%%
\subsection{Compact operators towards a boundary piece}\label{compactt}

For a discrete group $\Gamma$ and a set $U \subset \Delta\Gamma$, we denote by $P_U:\ell^2(\Gamma) \to \ell^2(\Gamma)$ the orthogonal projection onto the subspace $\ell^2(\Gamma \cap U)$, that is, $P_U(\delta_g) = \mathbf{1}_{g \in U} \delta_g$ for all $g \in \Gamma$.

\begin{defn}\label{mass}
Fix a discrete group $\Gamma$, a closed set $X \subset \partial \Gamma$ and a bounded net of vectors $\xi_n \in \ell^2(\Gamma)$. We say that $(\xi_n)_n$ has
\begin{itemize}
\item {\it positive mass on $X$} if there exists $\eps > 0$ such that for any neighborhood $U$ of $X$ inside $\Delta\Gamma$, we have $\Vert P_U(\xi_n) \Vert > \eps$ for all $n$ large enough;
\item {\it full mass on $X$} if for any neighborhood $U$ of $X$ inside $\Delta \Gamma$,  we have 
\[\limsup_n \Vert \xi_n - P_U(\xi_n)\Vert = 0.\]
\end{itemize}
\end{defn}

\begin{defn}\label{compact}
In the above setting, an operator $T \in B(\ell^2\Gamma)$ is said to be {\it compact towards $X$} if for any bounded net of vectors $\xi_n \in \ell^2\Gamma$ with full mass on $X$, we have $\lim_n \Vert T\xi_n\Vert = 0$. We denote by $\K(\Gamma;X)$ the set of all operators that are compact towards $X$, and note that it is a hereditary C*-subalgebra of $B(\ell^2\Gamma)$.
\end{defn}

Fix a group $\Gamma$ and a boundary piece $X \subset \partial\Gamma$. Recall that $I_0(X) \subset \ell^\infty(\Gamma)$ denotes the ideal of functions on $\Delta\Gamma$ that vanish on $X$.
Consider the following C*-algebra acting on $\ell^2\Gamma$:
\[A_\Gamma := C^*(\ell^\infty(\Gamma),\lambda(\Gamma),\rho(\Gamma)) \subset B(\ell^2\Gamma).\]
Denote by $I(X) \subset A_\Gamma$ the ideal generated by $I_0(X)$. Since $c_0(\Gamma)\subset I_0(X)$, we get that $I(X)$ contains the ideal of compact operators, by irreducibility. In fact, we have the following characterization.

\begin{lem}\label{au}
Any approximate unit $(e_i)_{i \in I}$ of $I_0(X)$ is an approximate unit for $I(X)$. In particular, \[I(X) = A_\Gamma \cap \K(\Gamma;X).\]
\end{lem}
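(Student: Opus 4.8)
The plan is to first obtain a concrete description of the ideal $I(X)$ and then read off both assertions from it. The key structural fact I would establish is
\[
I(X) \;=\; \overline{I_0(X)\,A_\Gamma}\;=\;\overline{A_\Gamma\,I_0(X)},
\]
with norm closures. Granting this, the approximate-unit statement is immediate: if $(e_i)$ is an approximate unit of $I_0(X)$ and $y=fa$ with $f\in I_0(X)$, $a\in A_\Gamma$, then $\|e_iy-y\|\le\|e_if-f\|\,\|a\|\to 0$; since such $y$ are dense in $\overline{I_0(X)A_\Gamma}=I(X)$ and $\|e_i\|\le 1$, we get $e_iy\to y$ for every $y\in I(X)$, and the right-handed statement $ye_i\to y$ follows symmetrically from $I(X)=\overline{A_\Gamma I_0(X)}$.

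To prove the displayed identity I would exploit that $X$ is left and right $\Gamma$-invariant, so $\lambda_g I_0(X)\lambda_g^*=I_0(X)$ and $\rho_g I_0(X)\rho_g^*=I_0(X)$, while $\ell^\infty(\Gamma)I_0(X)\subseteq I_0(X)$. Hence for any generator $c\in\ell^\infty(\Gamma)\cup\lambda(\Gamma)\cup\rho(\Gamma)$ of $A_\Gamma$ and any $f\in I_0(X)$ one has $cf\in\overline{I_0(X)A_\Gamma}$: for $c=\lambda_g$ write $\lambda_g f=(\lambda_g f\lambda_g^*)\lambda_g$, similarly for $\rho_g$, and for $c\in\ell^\infty(\Gamma)$ the product $cf$ already lies in $I_0(X)$. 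An induction on word length then gives $A_\Gamma I_0(X)\subseteq\overline{I_0(X)A_\Gamma}$, whence $I(X)=\overline{A_\Gamma I_0(X)A_\Gamma}\subseteq\overline{I_0(X)A_\Gamma}\subseteq I(X)$; taking adjoints (both $I_0(X)$ and $A_\Gamma$ are self-adjoint) yields the other equality.

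For the inclusion $I(X)\subseteq A_\Gamma\cap\K(\Gamma;X)$, only $I(X)\subseteq\K(\Gamma;X)$ needs an argument. I would first check $I_0(X)\subseteq\K(\Gamma;X)$: given $f\in I_0(X)$ and a bounded net $(\xi_n)$ of full mass on $X$, split $\|f\xi_n\|^2$ over the open neighborhood $U_\eps=\{|f|<\eps\}$ of $X$ and its complement; the first piece is $\le\eps^2\sup_n\|\xi_n\|^2$ and the second is $\le\|f\|_\infty^2\,\|\xi_n-P_{U_\eps}\xi_n\|^2\to 0$, so $\|f\xi_n\|\to 0$ after letting $\eps\to0$. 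Since $\K(\Gamma;X)$ is visibly a left ideal of $B(\ell^2\Gamma)$ (from $\|ST\xi_n\|\le\|S\|\,\|T\xi_n\|$), we obtain $A_\Gamma I_0(X)\subseteq\K(\Gamma;X)$, and therefore $I(X)=\overline{A_\Gamma I_0(X)}\subseteq\K(\Gamma;X)$.

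The substantive direction is $A_\Gamma\cap\K(\Gamma;X)\subseteq I(X)$, which I expect to be the main obstacle. For $T\in A_\Gamma\cap\K(\Gamma;X)$, note $e_iT\in I(X)$ (as $e_i\in I_0(X)$ and $I(X)$ is an ideal of $A_\Gamma$), so since $I(X)$ is closed it suffices to show $\|(1-e_i)T\|\to 0$. Here I would first record, via Urysohn's lemma on the compact Hausdorff space $\Delta\Gamma$, the equivalence that a bounded net $(\xi_n)$ has full mass on $X$ if and only if $\|f\xi_n\|\to 0$ for every $f\in I_0(X)$. Suppose $\|(1-e_i)T\|\not\to 0$; after passing to a subnet choose unit vectors $\eta_i$ with $\|(1-e_i)T\eta_i\|\ge\delta>0$. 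The bounded net $\zeta_i:=(1-e_i)^2T\eta_i$ then has full mass on $X$, since $\|f\zeta_i\|\le\|f(1-e_i)\|\,\|T\|\to 0$ using $\|f(1-e_i)\|\to 0$. Because $\K(\Gamma;X)$ is a $*$-algebra we have $T^*\in\K(\Gamma;X)$, so applying that $T^*$ is compact towards $X$ to this net gives $\|T^*\zeta_i\|\to 0$; but
\[
\delta^2\;\le\;\|(1-e_i)T\eta_i\|^2\;=\;\langle (1-e_i)^2T\eta_i,\,T\eta_i\rangle\;=\;\langle \zeta_i,\,T\eta_i\rangle\;=\;\langle T^*\zeta_i,\,\eta_i\rangle\;\le\;\|T^*\zeta_i\|\;\to\;0,
\]
a contradiction. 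The delicate points are the exact placement of the factors $1-e_i$ so that $\zeta_i$ genuinely acquires full mass on $X$ (this is precisely where $\|f(1-e_i)\|\to 0$ enters) and the passage to $T^*$, for which I rely on $\K(\Gamma;X)$ being self-adjoint.
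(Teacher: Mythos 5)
Most of your proof is correct and follows essentially the same route as the paper: the structural identity (that $I(X)$ is the closed linear span of $I_0(X)A_\Gamma$, equivalently of $A_\Gamma I_0(X)$, which the paper phrases via the spans of $\{f\lambda_g\rho_h\}$ and $\{\lambda_g\rho_h f\}$), from which the approximate-unit statement is immediate; the inclusion $I_0(X)\subset\K(\Gamma;X)$ by splitting over $U_\eps=\{|f|<\eps\}$; and the observation that $\K(\Gamma;X)$ is a norm-closed left ideal, giving $I(X)\subset A_\Gamma\cap\K(\Gamma;X)$. Your criterion --- a bounded net $(\xi_n)$ has full mass on $X$ if and only if $\|f\xi_n\|\to0$ for every $f\in I_0(X)$ --- is also correct, and it is what allows you to work with an arbitrary approximate unit, where the paper uses the special one $(e_U)_{U\in\mathcal N}$ with $e_U\equiv 1$ off $U$.

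The genuine gap is in your last step, where you pass from $T$ to $T^*$ ``because $\K(\Gamma;X)$ is a $*$-algebra.'' It is not, for any boundary piece $\emptyset\neq X\subsetneq\partial\Gamma$ (say with $\Gamma$ countable, the case of interest). By its definition, $\K(\Gamma;X)$ is a norm-closed \emph{left} ideal of $B(\ell^2\Gamma)$; if it were self-adjoint it would be a two-sided closed ideal, hence equal to $0$, $\K(\ell^2\Gamma)$ or $B(\ell^2\Gamma)$, and none of these holds: choosing $\omega\in\partial\Gamma\setminus X$ and disjoint open sets $U\ni\omega$, $V\supset X$, the projection $P_U$ has infinite rank (as $U\cap\Gamma$ is infinite) and lies in $\K(\Gamma;X)$ because $P_UP_V=0$ and full-mass nets concentrate in $V$, while $1\notin\K(\Gamma;X)$ (test on $\delta_{g_n}$ with $g_n\to\omega'\in X$). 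Explicitly, a partial isometry $W$ with initial space $\ell^2(\Gamma\cap V)$ and range inside $\ell^2(\Gamma\cap U)$ satisfies $W^*=W^*P_U\in\K(\Gamma;X)$, yet $W\notin\K(\Gamma;X)$. (The parenthetical remark in Definition~\ref{compact} calling $\K(\Gamma;X)$ a hereditary C$^*$-subalgebra is inaccurate for the same reason; but unlike your proof, the paper's proof never uses it.) Fortunately, your argument survives a simple transposition, and this is exactly what the paper does: prove $\|T(1-e_i)\|\to0$ rather than $\|(1-e_i)T\|\to0$. Choosing unit vectors $\eta_i$ with $\|T(1-e_i)\eta_i\|$ close to $\|T(1-e_i)\|$, the bounded net $\zeta_i:=(1-e_i)\eta_i$ has full mass on $X$ by your own criterion (since $\|f(1-e_i)\|\to0$ for all $f\in I_0(X)$), so $\|T\zeta_i\|\to0$ directly from $T\in\K(\Gamma;X)$, whence $T=\lim_i Te_i\in I(X)$ with no adjoint needed. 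Alternatively, you could prove the one instance of self-adjointness you actually use: every $a\in A_\Gamma$ maps full-mass nets to full-mass nets (for $\lambda_g,\rho_g$ this is the left/right invariance of $X$; for $\ell^\infty(\Gamma)$ it is immediate), so $A_\Gamma\cap\K(\Gamma;X)$ is a closed two-sided ideal of $A_\Gamma$ and is therefore self-adjoint.
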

\begin{proof}
Recall that $I_0(X)$ is by definition the set of continuous functions on $\Delta\Gamma$ which vanish on $X$. Since $X$ is left and right $\Gamma$-invariant, it is clear that $\lambda_g I_0(X) \lambda_g^* \subset I_0(X)$ and $\rho_g I_0(X) \rho_g^* \subset I_0(X)$ for all $g \in \Gamma$. In particular, the ideal $I(X)$ may be described as the norm closure of the linear span of $\{\lambda_g \rho_h f \, , \, f \in I_0(X), g,h \in \Gamma\}$, or alternatively, as the norm closure of the linear span of $\{f\lambda_g \rho_h  \, , \, f \in I_0(X), g,h \in \Gamma\}$. With these two descriptions it is now clear that any approximate unit for $I_0(X)$ is indeed an approximate unit for $I(X)$.

Let us prove the second part of the statement. Clearly, $A_\Gamma \cap \K(\Gamma;X)$ is an ideal inside $A_\Gamma$ and contains $I_0(X)$. So $I(X) \subset A_\Gamma \cap \K(\Gamma;X)$. Conversely, let $T \in A_\Gamma \cap \K(\Gamma;X)$. 
Let $\mathcal N$ be the set of open neighbourhoods of $X$ in $\Delta\Gamma$, ordered by inverse inclusion. For $U\in\mathcal N$, let $e_U\in C(\Delta\Gamma)$ such that $0\leq e_U\leq 1$, $e_U\equiv 0$ on $X$, and $e_U\equiv 1$ on $\Delta\Gamma\setminus U$. Then $(e_U)_{U\in\mathcal N}$ is an approximate unit of $I_0(X)$, and for any net of unit vectors $(\xi_U)_U$, the net $((1- e_U)\xi_U)_U$ has full mass on $X$. Since $T$ is compact towards $X$, $\lim_U \Vert T(1-e_U)\xi_U\Vert = 0$. This is easily seen to imply $\lim_U Te_U = T$. Hence $T \in I(X)$, by the first part of the lemma. \end{proof}

%and take an approximate unit of $I_0(X)$ made of functions $e_i$ on $\Delta\Gamma$ that vanish on $X$ but are equal to $1$ outside smaller and smaller neighborhoods of $X$. Then for any net of unit vectors $\xi_i$, the net $((1- e_i)\xi_i)_i$ has full mass on $X$. Since $T$ is compact towards $X$, $\lim_i \Vert T(1-e_i)\xi_i\Vert = 0$. This is easily seen to imply $\lim_i Te_i = T$. Hence $T \in I(X)$, by the first part of the lemma.

Let us now provide several classes of examples of boundary pieces arising from various kinds of data: geometric, cohomological, and representation theoretic.

%%%%%
\subsection{Proximal pieces}

Consider a discrete group $\Gamma$ with a continuous action on a compact space $\Gamma \actson K$ and take a probability measure $\eta \in \Prob(K)$.

\begin{defn}
A point $\omega \in \Delta\Gamma$ (i.e. an ultrafilter on $\Gamma$) is called an {\it $\eta$-proximal} element if for all $h \in \Gamma$, we have $\lim_{g \to \omega} ((gh)\cdot \eta - g\cdot \eta ) = 0$, in the weak-* topology. We denote by $\partial_\eta\Gamma \subset \Delta \Gamma$ the set of $\eta$-proximal elements. 
\end{defn}

The term proximal refers to the fact that the action of $\Gamma$ on $\Prob(K)$ pushes the whole orbit $\Gamma \eta$ to a single point (namely $\lim_{g \to \omega} g\eta$) when going in the direction of $\omega$.

\begin{lem}\label{first facts}
The set $\partial_\eta\Gamma$ enjoys the following properties.
\begin{enumerate}
\item It is a closed subset of $\Delta\Gamma$.
\item It is left and right $\Gamma$-invariant.
\item If $\eta$ is $\Gamma$-invariant, then $\partial_\eta \Gamma = \Delta \Gamma$. Otherwise, $\partial_\eta \Gamma \subset \partial\Gamma$.
\end{enumerate}
\end{lem}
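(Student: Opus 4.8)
The plan is to recast $\eta$-proximality as a single equivariance condition for a continuous map out of $\Delta\Gamma$, after which all three assertions become formal. First I would introduce the boundary orbit map. Since the orbit map $\Gamma \to \Prob(K)$, $g \mapsto g\cdot\eta$, takes values in the weak-$*$ compact Hausdorff space $\Prob(K)$, the universal property of the Stone--\v{C}ech compactification extends it uniquely to a continuous map $\beta \colon \Delta\Gamma \to \Prob(K)$, given by the ultrafilter limit $\beta(\omega) = \lim_{g\to\omega} g\cdot\eta$. Writing $s\omega$ and $\omega h$ for the commuting extensions of left and right multiplication to $\Delta\Gamma$, I would record two facts, each following from uniqueness of continuous extensions: (a) $\beta(s\omega) = s\cdot\beta(\omega)$ for $s \in \Gamma$, because $g \mapsto (sg)\cdot\eta = s\cdot(g\cdot\eta)$ and the $s$-action on $\Prob(K)$ is weak-$*$ continuous; and (b) $\lim_{g\to\omega}(gh)\cdot\eta = \beta(\omega h)$ for $h \in \Gamma$, because $g \mapsto (gh)\cdot\eta$ is the orbit map precomposed with right multiplication by $h$.

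Next, using that subtraction $\Prob(K)\times\Prob(K) \to C(K)^*$ is weak-$*$ continuous, fact (b) yields $\lim_{g\to\omega}\big((gh)\cdot\eta - g\cdot\eta\big) = \beta(\omega h) - \beta(\omega)$. Thus $\omega$ is $\eta$-proximal exactly when $\beta(\omega h) = \beta(\omega)$ for every $h \in \Gamma$, which gives the clean description
\[ \partial_\eta\Gamma = \bigcap_{h\in\Gamma}\{\omega \in \Delta\Gamma : \beta(\omega h) = \beta(\omega)\}. \]

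From this I would read off the three statements. For (1), each set in the intersection is the equalizer of the continuous maps $\omega \mapsto \beta(\omega h)$ and $\beta$ into the Hausdorff space $\Prob(K)$, hence closed, and an arbitrary intersection of closed sets is closed. For (2), left-invariance of $\partial_\eta\Gamma$ follows from (a) and $(s\omega)h = s(\omega h)$, since then $\beta((s\omega)h) = s\cdot\beta(\omega h) = s\cdot\beta(\omega) = \beta(s\omega)$; right-invariance follows from $(\omega t)h = \omega(th)$, giving $\beta((\omega t)h) = \beta(\omega(th)) = \beta(\omega) = \beta(\omega t)$ for all $h$, where I use $th, t \in \Gamma$. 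For (3), if $\eta$ is $\Gamma$-invariant then $\beta \equiv \eta$ is constant, so the defining condition holds at every $\omega$ and $\partial_\eta\Gamma = \Delta\Gamma$; if $\eta$ is not invariant, then at a principal ultrafilter $g_0 \in \Gamma$ the condition $\beta(g_0 h) = \beta(g_0)$ reads $(g_0 h)\cdot\eta = g_0\cdot\eta$ for all $h$, forcing $\eta$ to be invariant, a contradiction, so no point of $\Gamma$ is $\eta$-proximal and $\partial_\eta\Gamma \subset \partial\Gamma$.

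The one genuinely delicate point is fact (b): correctly identifying the ultrafilter limit $\lim_{g\to\omega}(gh)\cdot\eta$ with the value of $\beta$ at the right-translated ultrafilter $\omega h$, and checking that differences of weak-$*$ ultrafilter limits of measures behave as expected. Once the reformulation in terms of $\beta$ is in place, the remaining work is purely formal.
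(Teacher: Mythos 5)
Your proof is correct and follows essentially the same route as the paper: both extend the orbit map $g \mapsto g\cdot\eta$ to a continuous map on $\Delta\Gamma$, identify $\partial_\eta\Gamma$ as the intersection of equalizers $\bigcap_{h\in\Gamma}\{\omega : \beta(\omega h)=\beta(\omega)\}$, and deduce (1) from closedness of equalizers, (2) from left equivariance and associativity of the right action, and (3) by evaluating the condition at constant versus principal points. Your write-up merely makes explicit some steps (facts (a), (b), and statement (3)) that the paper leaves implicit or calls obvious.
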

\begin{proof}
Consider the orbit map $g \in \Gamma \mapsto g\cdot \eta \in \Prob(K)$. Since $\Prob(K)$ is compact for the weak-* topology, this map extends to a continuous map $\sigma: \Delta\Gamma \to \Prob(K)$. Then $\partial_\eta\Gamma$ can be expressed as $\bigcap_{h \in \Gamma} \{\omega \, : \, \sigma(\omega h) = \sigma(\omega)\}$, which implies $(1)$. Moreover, if $\omega \in \partial_\eta\Gamma $ and $g \in \Gamma$ then for all $h \in \Gamma$, we have $\sigma(\omega gh) = \sigma(\omega) = \sigma(\omega g)$. So $\partial_\eta\Gamma$ is invariant under the right action. The left invariance follows from the fact that $\sigma(g\omega) = g\sigma(\omega)$ for all $g \in \Gamma$. So $(2)$ holds true. Statement $(3)$ is obvious.
\end{proof}

We will  refer to the boundary pieces of the form $\partial_\eta\Gamma$ as {\it proximal pieces}.
A key feature of this construction of boundary pieces is that properties of the initial action $\Gamma \actson K$ (for example, the existence of invariant measures or amenability) can be transferred to properties of the corresponding boundary piece. This is based on the following lemma.

\begin{lem}\label{lem: theta}
Assume that the measure $\eta$ is not $\Gamma$-invariant, so that $X := \partial_\eta\Gamma$ is indeed a boundary piece. Denote by $\lambda$ and $\rho$ the left and right actions of $\Gamma$ on $C(X)$, respectively. Then there exists a unital completely positive map $\theta: C(K) \to C(X)$ such that for all $g \in \Gamma$, $f \in C(K)$, we have
\[\theta(g . f) = \lambda_g(\theta(f)) \text{ and } \rho_g(\theta(f)) = \theta(f).\]
\end{lem}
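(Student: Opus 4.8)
The plan is to build $\theta$ directly out of the continuous extension $\sigma\colon \Delta\Gamma \to \Prob(K)$ of the orbit map $g \mapsto g\cdot\eta$ that was produced in the proof of Lemma~\ref{first facts}. Since $X = \partial_\eta\Gamma$ is a closed subset of $\Delta\Gamma$, the restriction $\sigma|_X\colon X \to \Prob(K)$ is continuous, and for each $f \in C(K)$ the assignment $\mu \mapsto \int_K f\,\mathrm d\mu$ is weak-$*$ continuous on $\Prob(K)$. I would therefore define
\[ \theta(f)(\omega) := \int_K f\, \mathrm d\sigma(\omega), \qquad \omega \in X, \]
and the composition of these two continuous maps shows $\theta(f) \in C(X)$. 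This gives a well-defined linear map $\theta\colon C(K)\to C(X)$; identifying $C(X)$ with $\ell^\infty(\Gamma)/I_0(X)$, it is simply the restriction to $X$ of the bounded function $g\mapsto \int_K f\,\mathrm d(g\cdot\eta)$ on $\Gamma$, viewed in $C(\Delta\Gamma)=\ell^\infty(\Gamma)$.

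Next I would verify that $\theta$ is u.c.p. Each $\sigma(\omega)$ is a probability measure, so $f\mapsto \int_K f\,\mathrm d\sigma(\omega)$ is a state on $C(K)$; hence $\theta$ is positive and sends $1$ to the constant function $1$, i.e.\ it is unital and positive. Complete positivity is then automatic, since the domain $C(K)$ is abelian and every positive unital map out of a commutative C$^*$-algebra is completely positive.

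It remains to check the two covariance relations, and this is where the equivariance properties of $\sigma$ recorded in the proof of Lemma~\ref{first facts} enter. For the right action, recall that by the very definition of $X=\partial_\eta\Gamma$ one has $\sigma(\omega g)=\sigma(\omega)$ for every $\omega\in X$ and $g\in\Gamma$; since $\rho_g$ acts on $C(X)$ by $(\rho_g\phi)(\omega)=\phi(\omega g)$, this immediately gives $\rho_g(\theta(f))(\omega)=\int_K f\,\mathrm d\sigma(\omega g)=\int_K f\,\mathrm d\sigma(\omega)=\theta(f)(\omega)$, so $\rho_g(\theta(f))=\theta(f)$. For the left action, I would use the covariance $\sigma(g\omega)=g\cdot\sigma(\omega)$ together with the change-of-variables identity $\int_K (g.f)\,\mathrm d\mu=\int_K f\,\mathrm d(g^{-1}\cdot\mu)$ for the $\Gamma$-actions on $C(K)$ and $\Prob(K)$; a one-line computation then shows that both $\theta(g.f)(\omega)$ and $\lambda_g(\theta(f))(\omega)=\theta(f)(g^{-1}\omega)$ equal $\int_K f\,\mathrm d(g^{-1}\cdot\sigma(\omega))$, giving $\theta(g.f)=\lambda_g(\theta(f))$.

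There is no serious analytic obstacle here: the construction of $\sigma$ has already done the substantial work. The only points requiring care are bookkeeping ones—fixing consistent conventions for the left and right $\Gamma$-actions on $\Gamma$, on $C(K)$, and on $\Prob(K)$ so that the covariance identities line up correctly—and the conceptual observation that the right-invariance of $\theta$ is not an extra computation but is exactly the proximality condition built into the definition of the boundary piece $\partial_\eta\Gamma$.
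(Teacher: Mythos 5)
Your proof is correct and is essentially the paper's own argument: the paper defines $\theta_0(f)(g):=\int_K f\,d(g\cdot\eta)$ and composes with the restriction map $\ell^\infty(\Gamma)\to C(X)$, which is exactly your map $\theta(f)(\omega)=\int_K f\,d\sigma(\omega)$ expressed via the continuous extension $\sigma$, and the two covariance identities are verified by the same limit/equivariance computations ($\sigma(g\omega)=g\cdot\sigma(\omega)$ for left covariance, and $\sigma(\omega h)=\sigma(\omega)$ — the defining proximality condition — for right invariance).
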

\begin{proof}
For $f \in C(K)$ and $g \in \Gamma$, define $\theta_0(f)(g) := \int_K f d(g. \eta)$. The map $\theta_0: C(K) \to \ell^\infty(\Gamma)$ obtained this way is unital and completely positive. Denote by $\theta$ the composition of $\theta_0$ with the restriction map $\ell^\infty(\Gamma) \simeq C(\Delta\Gamma) \to C(X)$. The following two computations imply the lemma: for all $f \in C(K)$, $\omega \in X = \partial_\eta\Gamma$ and $h \in \Gamma$, we have
\[\theta(h \cdot f)(\omega) = \lim_{g \to \omega} \theta_0(h \cdot f)(g) = \lim_{g \to \omega} \theta_0(f)(h^{-1}g) = \theta(f)(h^{-1}\omega),\]
\[\theta(f)(\omega h) = \lim_{g \to \omega} \theta(f)(gh) =  \lim_{g \to \omega} \int_K f d((gh)\cdot \eta) =  \lim_{g \to \omega} \int_K f d(g \cdot \eta) = \theta(f)(\omega).\qedhere\]
\end{proof}

\begin{lem}\label{transfer}
Assume that $\eta$ is not $\Gamma$ invariant and write $X := \partial_\eta\Gamma$. Denote by $C(X)^{\Gamma_r}$ the C*-algebra of right-$\Gamma$-invariant continuous functions on $X$. We have the following two facts:
\begin{itemize}
\item If there is no $\Gamma$-invariant probability measure on $K$, then there is no left-$\Gamma$-invariant state on $C(X)^{\Gamma_r}$,
\item If the action $\Gamma \actson K$ is topologically amenable, then the left $\Gamma$- action $\Gamma \actson C(X)^{\Gamma_r}$ is amenable. We refer to \cite[Section 4.3]{BO08} for the definition of amenable actions.
\end{itemize}
\end{lem}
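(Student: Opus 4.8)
The plan is to route everything through the unital completely positive (u.c.p.) map $\theta \colon C(K) \to C(X)$ produced in Lemma~\ref{lem: theta}. The first thing I would record is that $\theta$ in fact lands in $C(X)^{\Gamma_r}$: the identity $\rho_g(\theta(f)) = \theta(f)$ says precisely that $\theta(f)$ is right-$\Gamma$-invariant, while $\theta(g\cdot f) = \lambda_g(\theta(f))$ says that $\theta \colon C(K) \to C(X)^{\Gamma_r}$ is left-$\Gamma$-equivariant. Both bullets then become instances of transporting structure from $K$ across this equivariant u.c.p.\ map. For the first bullet I would argue by contradiction: if $\psi$ is a left-$\Gamma$-invariant state on $C(X)^{\Gamma_r}$, then $\psi\circ\theta$ is a state on $C(K)$, and for all $g\in\Gamma$, $f\in C(K)$,
\[(\psi\circ\theta)(g\cdot f) = \psi\big(\lambda_g\theta(f)\big) = \psi(\theta(f)) = (\psi\circ\theta)(f),\]
so $\psi\circ\theta$ is a $\Gamma$-invariant state on $C(K)$, i.e.\ (via Riesz representation) a $\Gamma$-invariant probability measure on $K$, contradicting the hypothesis. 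This part is immediate.

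For the second bullet I would first identify $C(X)^{\Gamma_r}=C(Y)$ by Gelfand duality, with $Y$ the compact Hausdorff spectrum carrying the $\Gamma$-action dual to $\lambda$; by the definition in \cite[Section~4.3]{BO08}, amenability of $\Gamma\actson C(X)^{\Gamma_r}$ is exactly topological amenability of $\Gamma\actson Y$, and I would push the amenability net forward through $\theta$. After the standard reduction to finitely supported values, topological amenability of $\Gamma\actson K$ supplies weak-$*$ continuous maps $\mu_n\colon K\to\Prob(\Gamma)$, say $\mu_n^x=\sum_h f_{n,h}(x)\,\delta_h$ with $f_{n,h}\in C(K)_+$ finitely supported in $h$ and $\sum_h f_{n,h}=1$, such that $\eps_n:=\sup_{x}\|g\cdot\mu_n^x-\mu_n^{gx}\|_1\to 0$ for each fixed $g$. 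I would set $\nu_n^y:=\sum_h\theta(f_{n,h})(y)\,\delta_h$, which is a weak-$*$ continuous map $\nu_n\colon Y\to\Prob(\Gamma)$ since the finitely many $\theta(f_{n,h})\in C(Y)_+$ satisfy $\sum_h\theta(f_{n,h})=\theta(1)=1$.

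The crux is the uniform estimate. Unwinding the dual action on $Y$ together with $\theta(g\cdot f)=\lambda_g\theta(f)$, I would check that the $h$-th coordinates of $g\cdot\nu_n^y$ and of $\nu_n^{gy}$ equal $\theta(x\mapsto (g\cdot\mu_n^x)(h))(y)$ and $\theta(x\mapsto \mu_n^{gx}(h))(y)$ respectively. Writing $\phi_{n,h}(x)=(g\cdot\mu_n^x)(h)-\mu_n^{gx}(h)$ and using positivity of $\theta$ in the form $|\theta(\phi)|\le\theta(|\phi|)$, valid pointwise for real $\phi$, I would bound, for every $y\in Y$,
\[\|g\cdot\nu_n^y-\nu_n^{gy}\|_1=\sum_h|\theta(\phi_{n,h})(y)|\le\theta\Big(\sum_h|\phi_{n,h}|\Big)(y)=\theta\big(x\mapsto\|g\cdot\mu_n^x-\mu_n^{gx}\|_1\big)(y)\le\eps_n,\]
the last step because $x\mapsto\|g\cdot\mu_n^x-\mu_n^{gx}\|_1$ is dominated by $\eps_n\cdot 1$ and $\theta$ is positive and unital. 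Hence $(\nu_n)$ witnesses topological amenability of $\Gamma\actson Y$, which is the claim. I expect the only delicate bookkeeping to be the reduction to finitely supported $\mu_n$ (so that $\theta$ commutes with the finite sum over $h$) together with the action-convention chase on $Y$; the single genuine mechanism is the positivity inequality $|\theta(\phi)|\le\theta(|\phi|)$, which is what lets uniform $\ell^1$-smallness on $K$ survive the passage through $\theta$ onto $Y$.
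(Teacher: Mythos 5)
Your proposal is correct and follows essentially the same route as the paper: both bullets are transported through the equivariant u.c.p.\ map $\theta$ of Lemma~\ref{lem: theta}, and your argument for the first bullet is verbatim the paper's. For the second bullet the paper is slightly more abstract---it observes that $\theta$ dualizes to a $\Gamma$-equivariant continuous map from the Gelfand spectrum of $C(X)^{\Gamma_r}$ into $\Prob(K)$ and then invokes the standard permanence facts (topological amenability of $\Gamma \actson K$ passes to $\Prob(K)$ and pulls back along equivariant continuous maps); your explicit construction of the means $\nu_n^y$ via the positivity inequality $|\theta(\phi)|\le\theta(|\phi|)$ is exactly what one obtains by unwinding those citations, so it is the same mechanism, merely made self-contained.
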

\begin{proof}
The map $\theta: C(K) \to C(X)$ given by Lemma \ref{lem: theta} ranges into $C(X)^{\Gamma_r}$ and is left-$\Gamma$-equivariant. So if there exists a left-$\Gamma$-invariant state $\varphi$ on $C(X)^{\Gamma_r}$, then $\varphi \circ \theta$ is a $\Gamma$-invariant state on $C(K)$, so there exists a $\Gamma$-invariant probability measure on $K$. This proves the first fact.

Note that $\theta$ induces a $\Gamma$-equivariant continuous map from the Gelfand spectrum $\tilde{X}$ of $C(X)^{\Gamma_r}$ to the state space $\cS(C(K))$, which is nothing but $\Prob(K)$. If the action of $\Gamma$ on $K$ is topologically amenable then so is the action on $\tilde{X}$. Hence $\Gamma \actson C(X)^{\Gamma_r}$ is amenable.
\end{proof}

The next lemma shows that every boundary piece is a proximal piece, and the involved action can be chosen so that the converse of the previous lemma holds true.

\begin{lem}\label{all proximal}
For every boundary piece $X \subset \partial\Gamma$ there exists a continuous action of $\Gamma$ on a compact space $K$, with a probability measure $\eta \in \Prob(K)$ such that $X = \partial_\eta \Gamma$. Moreover the action $\Gamma \actson K$ may be chosen so that:
\begin{itemize}
\item If there is no left $\Gamma$-invariant state on $C(X)^{\Gamma_r}$ then there is no $\Gamma$-invariant probability measure on $K$;
\item If the action $\Gamma \actson C(X)^{\Gamma_r}$ is amenable then $\Gamma \actson K$ is topologically amenable.
\end{itemize}
\end{lem}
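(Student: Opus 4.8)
The plan is to realize $X$ as the remainder of a suitable $\Gamma$-compactification of $\Gamma$ itself, with $\eta$ a point mass at the identity. Write $A := C(X)^{\Gamma_r}$ and let $\wh A$ be its Gelfand spectrum. Since the left and right $\Gamma$-actions on $\ell^\infty(\Gamma)=C(\Delta\Gamma)$ commute, left translation preserves right-invariance, so $\Gamma$ acts continuously on $\wh A$ from the left; dualizing $A\hookrightarrow C(X)$ yields an equivariant continuous surjection $q\colon X\to\wh A$ that collapses right $\Gamma$-orbits, i.e. $q(\omega h)=q(\omega)$ for all $\omega\in X$, $h\in\Gamma$. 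The heart of the construction is to glue $\wh A$ onto $\Gamma$ as a boundary: I would produce a left-$\Gamma$-invariant unital C*-subalgebra $B\subseteq\ell^\infty(\Gamma)$ with $c_0(\Gamma)\subseteq B$, whose image under the restriction $\pi_X\colon\ell^\infty(\Gamma)\to C(X)$ equals $A$, and which satisfies $B\cap I_0(X)=c_0(\Gamma)$. Then $K:=\wh B$ is a compactification of $\Gamma$ with $K\setminus\Gamma\cong\wh A$ equivariantly, the boundary map $\partial\Gamma\to\wh A$ extending $q$. I expect to build such a $B$ (equivalently, to extend $q$ to an equivariant map $\partial\Gamma\to\wh A$) using the injectivity/universal features of $\ell^\infty(\Gamma)=C(\Delta\Gamma)$ together with an equivariant lift of $A$ modulo $c_0(\Gamma)$.

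Granting the construction, set $\eta:=\delta_e$. The orbit map $g\mapsto g\cdot\eta=\delta_g$ extends, as in the proof of Lemma \ref{first facts}, to a continuous map $\sigma\colon\Delta\Gamma\to\Prob(K)$ with $\sigma(\omega)=\delta_{[\omega]}$, where $[\,\cdot\,]\colon\Delta\Gamma\to K$ is the canonical quotient (the identity on $\Gamma$, the boundary map on $\partial\Gamma$). Hence $\partial_\eta\Gamma=\{\omega:[\omega h]=[\omega]\ \forall h\in\Gamma\}$. For $\omega\in X$ we have $[\omega]=q(\omega)$ and $[\omega h]=q(\omega h)=q(\omega)$, so $X\subseteq\partial_\eta\Gamma$. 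For the reverse inclusion, a point $\omega\notin X$ lies in $\Gamma$ or in $\partial\Gamma\setminus X$; using that the right translation action of $\Gamma$ on $\Delta\Gamma$ is \emph{free} (so $\omega h\neq\omega$ for $h\neq e$), together with Urysohn separation in $\Delta\Gamma$ for interior points and the non-collapsing property built into $B$ off $X$, I would find $h$ with $[\omega h]\neq[\omega]$, giving $\partial_\eta\Gamma\subseteq X$. By Lemma \ref{first facts}(3), $\eta$ is not invariant, so $X=\partial_\eta\Gamma$ is a genuine boundary piece.

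For the two additional assertions I would exploit that the remainder of $K$ is exactly $\wh A$. A $\Gamma$-invariant probability measure on $K$ must vanish on the free orbit $\Gamma\subseteq K$ (a single infinite orbit carries no invariant mass), hence is supported on $\wh A$ and restricts to a left-invariant state on $A$; this is the contrapositive of the first bullet. For the second, $K=\Gamma\sqcup\wh A$ displays $\Gamma$ as an open orbit with trivial (hence amenable) stabilizer and $\wh A$ as the closed complement, so topological amenability of $\Gamma\actson\wh A$ (equivalently of $\Gamma\actson A$) propagates to $\Gamma\actson K$ because topological amenability is stable under extensions by amenable open and closed invariant pieces. The principal obstacle is the construction of $B$ in the first paragraph: one must \emph{simultaneously} arrange that the remainder is equivariantly $\wh A$ extending $q$ (forcing the collapse of right-orbits \emph{on} $X$) and that $B$ does not over-collapse (the right-orbit collapse locus being exactly $X$, so that $\partial_\eta\Gamma$ is not larger than $X$). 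The delicate half is the non-collapsing off $X$: a bi-equivariant retraction $\partial\Gamma\to X$ would spoil everything by forcing $\partial_\eta\Gamma=\Delta\Gamma$, so the gluing must be only left-equivariant, and I expect to control it via the Stone–Čech structure and the freeness of the right action.
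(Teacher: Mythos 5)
The core of your construction is exactly the step you flag as the ``principal obstacle,'' and it is a genuine gap --- in fact the requirement you impose is both unnecessary and, in general, unachievable. You ask for a left-invariant subalgebra $B\subseteq\ell^\infty(\Gamma)$ with $\pi_X(B)=A$ and $B\cap I_0(X)=c_0(\Gamma)$, equivalently a left-equivariant continuous extension of the quotient map $q\colon X\to\wh A$ to all of $\partial\Gamma$. No argument is given for its existence: injectivity of $\ell^\infty(\Gamma)$ only produces u.c.p.\ extensions, not the $*$-homomorphic (i.e.\ topological) extension you need, and extending a continuous map from a closed subspace into a general compact target is obstructed (this is not a Tietze situation, and the equivariance requirement makes it worse). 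More seriously, even granting such a $B$, the reverse inclusion $\partial_\eta\Gamma\subseteq X$ can simply fail for your $K=\Gamma\sqcup\wh A$: since your remainder is only $\wh A$, the quotient map sends \emph{every} point of $\partial\Gamma$ (not just of $X$) into $\wh A$, and the only functions available to detect that a right orbit off $X$ is not collapsed are the lifted copies of elements of $A=C(X)^{\Gamma_r}$, which are right-invariant functions on $X$ and carry no separating power off $X$. In the extreme case where $C(X)^{\Gamma_r}=\C 1$, your $K$ would be the one-point compactification of $\Gamma$, every right orbit in $\partial\Gamma$ would be collapsed, and $\partial_\eta\Gamma=\partial\Gamma\supsetneq X$. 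So the ``non-collapsing off $X$'' is not a delicate half to be controlled later; it is incompatible with your choice of remainder.

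The paper avoids both problems by making the opposite choice of $B$: it takes $B:=\phi^{-1}(A)$, the \emph{full} preimage of $A$ under the restriction map $\phi\colon\ell^\infty(\Gamma)\to C(X)$, so that the entire ideal $I_0(X)=\ker\phi$ is contained in $B$. Then no lifting or extension problem arises ($B$ is automatically a bi-invariant C*-algebra), and the spectrum $K$ of $B$ is $(\Delta\Gamma\setminus X)\sqcup\wh A$: the quotient $\pi\colon\Delta\Gamma\to K$ collapses right orbits on $X$ (because the right action on $A$ is trivial) but restricts to a homeomorphism on $\Delta\Gamma\setminus X$ (because $I_0(X)\subset B$ separates its points). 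With $\eta=\delta_{\pi(e)}$, the inclusion $X\subseteq\partial_\eta\Gamma$ is your computation, and the reverse inclusion is now automatic: for $\omega\in\partial\Gamma\setminus X$, freeness of the right translation action gives some $h$ with $\omega h\neq\omega$, both points lie in $\Delta\Gamma\setminus X$ by right invariance of $X$, and injectivity of $\pi$ there gives $\pi(\omega h)\neq\pi(\omega)$. Note also that with this $K$ the part complementary to $\wh A$ is $\Delta\Gamma\setminus X$ rather than the single orbit $\Gamma$, so the two bullets require more than your arguments: for the first, the paper observes that the hypothesis forces $\Gamma$ to be non-amenable and uses this to rule out invariant mass on all of $\pi(\Delta\Gamma\setminus X)$, not just on $\Gamma$; for the second, it proves nuclearity of $B\rtimes_r\Gamma$ via the exact sequence $0\to I_0(X)\rtimes_r\Gamma\to B\rtimes_r\Gamma\to A\rtimes_r\Gamma\to 0$, using exactness of $\Gamma$ and nuclearity of the uniform Roe algebra, rather than a general ``open amenable piece plus closed amenable piece'' permanence statement.
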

\begin{proof}
Denote by $\phi: \ell^\infty(\Gamma) \simeq C(\Delta\Gamma) \to C(X)$ the restriction map, by $A := C(X)^{\Gamma_r} \subset C(X)$ and by $B := \phi^{-1}(A) \subset \ell^\infty(\Gamma)$. Note that $\phi$ is $\Gamma \times \Gamma$-equivariant and $B$ is globally $\Gamma \times \Gamma$-invariant. By dualyzing the $\Gamma \times \Gamma$-embeddings $\ker(\phi) \subset B \subset \ell^\infty(\Gamma)$, we find a compact $\Gamma \times \Gamma$-space $K$ and a continuous $\Gamma \times \Gamma$-equivariant map $\pi: \Delta\Gamma \to K$ which restricts to a homeomorphism on $\Delta\Gamma \setminus X$ and such that $B \simeq C(K)$ and the embedding $B \subset \ell^\infty(\Gamma)$ corresponds to the map 
\[f \in C(K) \mapsto f \circ \pi \in \ell^\infty(\Gamma).\]
Since the right $\Gamma$-action on $A$ is trivial, one easily verifies that $\pi(\omega h) = \pi(\omega)$ for all $\omega \in X$, $h \in \Gamma$.

We are interested in the left action of $\Gamma$ on $K$, and the measure $\eta := \delta_{\pi(e)} \in \Prob(K)$, the Dirac mass at the image of the neutral element $e \in \Gamma$. For $\omega \in X$ and $h \in \Gamma$  we have
\[\lim_{g \to \omega} (gh)\cdot \eta = \lim_{g \to \omega} \delta_{\pi(gh)} = \delta_{\pi(\omega h)} = \delta_{\pi(\omega)} = \lim_{g \to \omega} g \cdot \eta.\]
This shows that $X \subset \partial_\eta\Gamma$. Conversely, if $\omega \in \partial\Gamma \setminus X$, we may take $h \in \Gamma$ such that $\omega h \neq \omega$. Then since $\pi_{\vert \Delta\Gamma \setminus K}$ is a homeomorphism, we have $\pi(\omega h) \neq \pi(\omega)$. The above computation then shows that $\lim_{g \to \omega} (gh)\cdot \eta \neq \lim_{g \to \omega} g \cdot \eta$. Hence $\omega \notin \partial_\eta\Gamma$. This proves that $X = \partial_\eta\Gamma$.

Now let us verify the two properties of this action.
If there is no left-$\Gamma$-invariant state on $C(X)^{\Gamma_r}$, then in particular, $\Gamma$ is non-amenable.
If $\mu$ is an invariant probability measure on $K$, then $\mu$ has zero mass on $\pi(\Delta \Gamma \setminus X)$, because $\Gamma$ is non-amenable and the restriction of $\pi$ to $\Delta \Gamma \setminus X$ is a $\Gamma$-equivariant homeomorphism. In particular, the state on $B = C(K)$ given by integration with respect to $\mu$ vanishes on $\ker(\phi)$. It thus induces a $\Gamma$-invariant state on $A$, which was excluded. We conclude that there is no such measure $\mu$.

Let us now assume that the action $\Gamma \actson C(X)^{\Gamma_r}$ is amenable, and show that the action on $K$ is amenable. By \cite[Theorem 4.4.3]{BO08}, it suffices to check that the reduced crossed product by the left action $B \rtimes_r \Gamma$ is nuclear. We will show that it is an extension of nuclear C*-algebras. 

Recall that $I_0(X) \subset \ell^\infty(\Gamma)$ denotes the ideal corresponding to the boundary piece $X$, i.e. $I_0(X) = C_0(\Delta\Gamma \setminus X)$. It is left and right $\Gamma$-invariant and hence the reduced crossed product by the left $\Gamma$-action $I_0(X) \rtimes_r \Gamma$ is an ideal inside the uniform Roe algebra $\ell^\infty(\Gamma)\rtimes_r \Gamma$. Since $\Gamma$ is exact \cite[Theorem 5.1.7]{BO08}, the uniform Roe algebra is nuclear and hence $I_0(X) \rtimes_r \Gamma$ is nuclear as well.

On the other hand we have an exact sequence $0 \to I_0(X) \to B \to A \to 0$ of left $\Gamma$ C*-algebras. Using again the fact that $\Gamma$ is exact, we get that the following sequence is exact
\[0 \to I_0(X) \rtimes_r \Gamma \to B \rtimes_r \Gamma \to A \rtimes_r \Gamma \to 0.\]
Since the action of $\Gamma$ on $A$ is amenable, the C*-algebra $ A \rtimes_r \Gamma$ is nuclear. We conclude from \cite[Proposition 10.1.3]{BO08} that $B \rtimes_r \Gamma$ is nuclear, as wanted.
\end{proof}

%%%%%
\subsection{Boundary pieces from arrays}

Consider a discrete group $\Gamma$ and a unitary representation $\pi: \Gamma \to \cU(H)$ into a Hilbert space $H$. We recall the following definition from \cite[Definition 1.1, Proposition 1.5]{CS13}. %The following definition is a variation of \cite[Definition 1.1]{CS13} and \cite[Definition 15.1.2]{BO08}.

\begin{defn}
A {\it two-sided array} of $\Gamma$ into $\pi$ is a map $b: \Gamma \to H$ such that 
\[\sup_{g \in \Gamma} \Vert b(sgt) - \pi_s(b(g))\Vert < \infty, \text{ for all } s,t \in \Gamma.\]
\end{defn}

Of course this notion is interesting only if $b$ is unbounded. If this is the case, we can construct a boundary piece.

\begin{lem}\label{array piece}
If $b: \Gamma \to H$ is an unbounded array then $X := \{\omega \in \Delta\Gamma \mid \lim_{g \to \omega} \Vert b(g) \Vert = +\infty\}$ is a non-empty boundary piece. Moreover there exists a unital completely positive map $\theta: B(H) \to C(X)$ such that
\[ \theta \circ \Ad(\pi(g)) = \lambda_g \circ \theta \text{ and } \rho_g \circ \theta = \theta, \text{ for all } g \in \Gamma.\]
\end{lem}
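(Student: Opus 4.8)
The plan is to handle the two assertions separately: that $X$ is a non-empty boundary piece (elementary), and the construction of $\theta$ (where the real content lies).

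For the first part I would view $g \mapsto \Vert b(g)\Vert$ as a map $\Gamma \to [0,+\infty]$ and extend it continuously to $f \colon \Delta\Gamma \to [0,+\infty]$, using compactness of $[0,+\infty]$. By definition $X = f^{-1}(\{+\infty\})$, which is closed. It is non-empty because $f$ is continuous on the compact space $\Delta\Gamma$ and agrees on the dense set $\Gamma$ with the \emph{unbounded} function $\Vert b\Vert$, so its image is a closed unbounded subset of $[0,+\infty]$ and hence contains $+\infty$. Since $f(g) = \Vert b(g)\Vert < \infty$ for $g \in \Gamma$, we also get $X \subset \partial\Gamma$. For invariance I would apply the defining array inequality in the two degenerate cases. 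Taking $s = e$ gives $\Vert b(gt) - b(g)\Vert \le C_{e,t}$, so $\Vert b(gt)\Vert \to \infty$ whenever $\Vert b(g)\Vert \to \infty$; evaluating $f$ along the ultrafilter yields $f(\omega t) = f(\omega)$, i.e. right invariance. Taking $t = e$ and using that $\pi_s$ is unitary (so $\Vert \pi_s(b(g))\Vert = \Vert b(g)\Vert$) gives $\Vert b(sg)\Vert \to \infty$ as well, whence $f(s\omega) = f(\omega)$ and left invariance. Thus $X$ is a boundary piece.

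For the map $\theta$, the idea is to normalize the array. Fix a unit vector $\zeta_0 \in H$ and set $\xi_g := b(g)/\Vert b(g)\Vert$ when $b(g) \neq 0$ and $\xi_g := \zeta_0$ otherwise, so each $\xi_g$ is a unit vector. I would define $\theta_0 \colon B(H) \to \ell^\infty(\Gamma)$ by $\theta_0(T)(g) := \langle T\xi_g, \xi_g\rangle$. For each $g$ this is a vector state, hence $\theta_0$ is a u.c.p. map into the abelian C*-algebra $\ell^\infty(\Gamma)$. Composing with the (unital $*$-homomorphic, hence u.c.p.) restriction map $\ell^\infty(\Gamma) \simeq C(\Delta\Gamma) \to C(X)$ produces $\theta$, whose value at $\omega \in X$ is $\theta(T)(\omega) = \lim_{g \to \omega}\langle T\xi_g, \xi_g\rangle$.

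The heart of the matter — and the step I expect to be the main obstacle — is the normalization estimate: for fixed $s,t \in \Gamma$ and $\omega \in X$, one has $\lim_{g \to \omega}\Vert \xi_{sgt} - \pi_s(\xi_g)\Vert = 0$. This follows from the array bound $\Vert b(sgt) - \pi_s(b(g))\Vert \le C_{s,t}$ together with the elementary fact that if $\Vert u - v\Vert \le C$ and $\Vert u\Vert,\Vert v\Vert \to \infty$ then $\Vert\, u/\Vert u\Vert - v/\Vert v\Vert\,\Vert \to 0$; here $\Vert \pi_s(b(g))\Vert = \Vert b(g)\Vert \to \infty$ along $\omega$. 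Granting this, the two intertwining relations are bookkeeping. For the right relation, take $s = e$: then $\Vert \xi_{gt} - \xi_g\Vert \to 0$ along $\omega$, so $\langle T\xi_{gt},\xi_{gt}\rangle - \langle T\xi_g,\xi_g\rangle \to 0$, which says exactly $\rho_g(\theta(T)) = \theta(T)$. For the left relation, note $g^{-1}\omega \in X$ by left invariance and write $(\lambda_g \theta(T))(\omega) = \theta(T)(g^{-1}\omega) = \lim_{k \to \omega}\langle T\xi_{g^{-1}k}, \xi_{g^{-1}k}\rangle$; applying the estimate with $s = g^{-1}$, $t = e$ gives $\Vert \xi_{g^{-1}k} - \pi_{g^{-1}}(\xi_k)\Vert \to 0$, so this limit equals $\lim_{k\to\omega}\langle \pi_g T \pi_g^* \xi_k, \xi_k\rangle = \theta(\Ad(\pi_g)(T))(\omega)$, using $\pi_{g^{-1}} = \pi_g^*$. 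Hence $\lambda_g \circ \theta = \theta \circ \Ad(\pi_g)$ and $\rho_g \circ \theta = \theta$. The one genuinely delicate point is that every ``$\to 0$'' above is a limit along the ultrafilter $\omega \in X$, precisely where $\Vert b(\cdot)\Vert \to \infty$, so the normalization estimate is valid exactly on the set relevant to evaluating $\theta$ at points of $X$.
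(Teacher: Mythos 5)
Your proof is correct and follows essentially the same route as the paper: normalizing the array to unit vectors $\xi_g = b(g)/\Vert b(g)\Vert$, defining $\theta_0(T)(g) = \langle T\xi_g,\xi_g\rangle$, composing with the restriction map $\ell^\infty(\Gamma) \to C(X)$, and deducing the equivariance relations from the estimate $\lim_{g\to\omega}\Vert \xi_{sgt} - \pi_s(\xi_g)\Vert = 0$ for $\omega \in X$. The only difference is that you spell out the details (non-emptiness of $X$, the $b(g)=0$ case, and the elementary normalization inequality) that the paper leaves implicit.
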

\begin{proof}
The set $X$ is obviously closed and contained in $\partial \Gamma$. The left and right invariance of $X$ follow from the fact that $b$ is a two sided array. 

For all $T \in B(H)$ and $g \in \Gamma$, set $\theta_0(T)(g) := \langle T \xi_g ,\xi_g \rangle$, with $\xi_g := \frac{b(g)}{\Vert b(g) \Vert}$. The resulting map $\theta_0: B(H) \to \ell^\infty(\Gamma)$ is unital and completely positive. Define $\theta$ as the composition of $\theta_0$ with the restriction map $\ell^\infty(\Gamma) \to C(X)$.
As in the proof of Lemma \ref{lem: theta}, the desired properties of $\theta$ easily follow from the fact that $\lim_{g \to \omega} \Vert \pi_s(\xi_g) - \xi_{sgt}\Vert = 0$, for all $\omega \in X$, and all $s,t \in \Gamma$.
\end{proof}

%Note that the existence of an unbounded two-sided array into a given representation $(\pi,H)$ is a non-trivial cohomological condition. 
As  emphasized in \cite{CS13}, cocycles and quasi-cocycles are examples of two-sided arrays.
 %According to \cite{CS13}, a group which satisfies this condition is said to belong to the class $\cQ\cH$ (or $\cQ\cH(\pi)$ if one wants to specify the representation $\pi$). As Chifan and Sinclair emphasize, cocycles and quasi-cocycles are examples of two sided arrays.

%%%%%
\subsection{Mixing pieces}

Although we won't need it in the sequel, we present here a very natural example of boundary piece, which is of independent interest.

Let $\pi: \Gamma \to \cU(H)$ be a unitary representation. Recall that $\pi: \Gamma \to \cU(H)$ is {\it weakly mixing} if $0$ is a weak operator topology accumulation point of $\pi(\Gamma)$, while $\pi$ is {\it mixing} if this is the unique accumulation point. The unit ball $(B(H) )_1$ is compact with respect to the weak operator topology and hence we may extend $\pi$ continuously to a map from $\Delta \Gamma$ into $(B(H))_1$. The set $X(\pi) = \pi^{-1}( \{ 0 \} ) \subset \partial\Gamma$ is easily seen to be a boundary piece which records all the directions in which $\pi$ is mixing. We call it the {\it mixing piece} of $\pi$.

\begin{lem}\label{lem:wotnbhd}
Let $\pi: \Gamma \to \cU(H)$ be a representation. Let $U$ be a neighborhood of $X(\pi)$. Then there exists a weak operator topology neighborhood $O$ of $0 \in B(H)$ such that $\pi^{-1}(O) \subset U$. 
\end{lem}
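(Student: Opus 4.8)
The plan is to exploit that both $\Delta\Gamma$ and the unit ball $(B(H))_1$, equipped with the weak operator topology, are compact Hausdorff spaces, and that the map $\pi: \Delta\Gamma \to (B(H))_1$ obtained by continuous extension is continuous; once this is in place, the statement reduces to a routine point/compact-set separation argument. First I would reduce to the case where $U$ is open: replacing $U$ by its interior, which still contains $X(\pi)$, it suffices to produce $O$ with $\pi^{-1}(O)$ contained in $\operatorname{int}(U) \subset U$. Setting $C := \Delta\Gamma \setminus U$, the set $C$ is closed in the compact space $\Delta\Gamma$, hence compact, and by continuity of $\pi$ its image $\pi(C)$ is a weak operator topology compact subset of $(B(H))_1$.

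Next I would observe that $0 \notin \pi(C)$. Indeed, if $\pi(\omega) = 0$ for some $\omega \in C$, then $\omega \in \pi^{-1}(\{0\}) = X(\pi) \subset U$, contradicting $\omega \in C = \Delta\Gamma \setminus U$. Since the weak operator topology is Hausdorff on $(B(H))_1$, a single point and a disjoint compact set can be separated by open sets: for each $T \in \pi(C)$ choose disjoint weak operator topology open sets separating $T$ from $0$, extract a finite subcover of $\pi(C)$ from the former, and intersect the corresponding neighborhoods of $0$. This yields a weak operator topology open neighborhood $O$ of $0$ with $O \cap \pi(C) = \emptyset$.

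Finally, $O \cap \pi(C) = \emptyset$ gives $\pi^{-1}(O) \cap C = \emptyset$, that is, $\pi^{-1}(O) \subset U$, as desired. I do not expect a genuine obstacle here; the only point requiring care is that the point/compact-set separation is carried out in the generally non-metrizable weak operator topology, where it remains valid purely because $(B(H))_1$ is weak operator topology compact (as recalled just before the lemma) and the weak operator topology is Hausdorff.
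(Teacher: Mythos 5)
Your proof is correct and follows essentially the same route as the paper: both arguments rest on the continuity of the extension $\pi\colon \Delta\Gamma \to (B(H))_1$, the compactness of $\Delta\Gamma \setminus U$, and the Hausdorffness of the weak operator topology. The only cosmetic difference is that you push $\Delta\Gamma \setminus U$ forward and separate $0$ from the compact image $\pi(C)$ inside $(B(H))_1$, whereas the paper performs the finite-subcover step directly in $\Delta\Gamma$ using closures of the sets $\pi^{-1}(O)$; the two formulations are interchangeable.
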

\begin{proof}
For each point $\omega \in \Delta \Gamma \setminus U$ there exists a WOT-neighborhood $O$ of $0$ in $(B(H))_1$ such that $\pi(\omega) \not\in \overline{O}$, and hence $\omega$ is not in the closure of $\pi^{-1}(O)$. By compactness of $\Delta \Gamma \setminus U$ it then follows that there is a WOT-neighborhood $\tilde O$ such that $\pi^{-1}(\tilde O) \subset U$. 
\end{proof}

\begin{lem}\label{lem:mixingsequence}
Let $\pi: \Gamma \to \cU(H)$ be a representation. Let $(c_n)$ be a uniformly bounded net in $L\Gamma$. The following conditions are equivalent:
\begin{enumerate}[(i)]
\item $(c_n)$ has full mass on $X(\pi)$\footnote{Here we view $(c_n)$ as a bounded net in $\ell^2(\Gamma)$ so Definition \ref{mass} makes sense.}.
\item For all other representation $(\rho,K)$, all $\xi, \eta \in H \ot K \ot \ell^2(\Gamma)$ and all uniformly bounded net $(d_n) \subset L\Gamma$ we have $\lim_{n \to \infty} \langle c_n \xi d_n, \eta \rangle = 0$.
\item $\lim_{n \to \infty} \langle c_n ( \xi \otimes \delta_e ) c_n^*, \eta \otimes \delta_e \rangle = 0$ for all $\xi, \eta \in H \ot \overline{H}$.
\item $\lim_{n \to \infty} \langle c_n ( \xi \otimes \delta_e ) c_n^*, \eta \otimes \delta_e \rangle = 0$ for all vectors $\xi, \eta \in H \ot \overline{H}$ of the form $\xi = \xi_0 \ot \overline {\xi_0}$, $\eta = \eta_0 \ot \overline {\eta_0}$, with $\xi_0,\eta_0 \in H$.
\end{enumerate}
\end{lem}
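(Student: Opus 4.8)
The plan is to establish the cycle of implications (i) $\Rightarrow$ (ii) $\Rightarrow$ (iii) $\Rightarrow$ (iv) $\Rightarrow$ (i). Two of these are immediate: (iii) is the special case of (ii) obtained by taking $(\rho,K)=(\overline\pi,\overline H)$, $d_n=c_n^*$, and the vectors $\xi\ot\delta_e,\eta\ot\delta_e$; and (iv) is the restriction of (iii) to simple tensors. So the real content lies in (iv) $\Rightarrow$ (i) and (i) $\Rightarrow$ (ii). Throughout I write $c_n=\sum_{s\in\Gamma}\widehat{c_n}(s)u_s$, so that $\sum_s|\widehat{c_n}(s)|^2=\|c_n\|_2^2$, and I use that for any representation $(\sigma,L)$ the associated $L\Gamma$-bimodule is $L\ot\ell^2\Gamma$ with $u_s\cdot(\zeta\ot\delta_g)\cdot u_t=\sigma(s)\zeta\ot\delta_{sgt}$.

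The computational core is a Fourier expansion. Using the bimodule formula for $\sigma=\pi\ot\overline\pi$ on $H\ot\overline H$, and the fact that only the diagonal terms survive the pairing against $\eta\ot\delta_e$, one gets
\[\langle c_n(\xi\ot\delta_e)c_n^*,\eta\ot\delta_e\rangle=\sum_{s\in\Gamma}|\widehat{c_n}(s)|^2\,\langle(\pi\ot\overline\pi)(s)\xi,\eta\rangle,\qquad \xi,\eta\in H\ot\overline H.\]
Specializing to $\xi=\xi_0\ot\overline{\xi_0}$, $\eta=\eta_0\ot\overline{\eta_0}$ and using $\langle\overline\pi(s)\overline{\xi_0},\overline{\eta_0}\rangle=\overline{\langle\pi(s)\xi_0,\eta_0\rangle}$, this becomes $\sum_s|\widehat{c_n}(s)|^2\,|\langle\pi(s)\xi_0,\eta_0\rangle|^2$. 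Thus condition (iv) is exactly the statement that $\lim_n\sum_s|\widehat{c_n}(s)|^2|\langle\pi(s)\xi_0,\eta_0\rangle|^2=0$ for all $\xi_0,\eta_0\in H$.

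For (iv) $\Rightarrow$ (i) I would argue by a Chebyshev-type estimate. By Lemma~\ref{lem:wotnbhd}, every neighborhood $U$ of $X(\pi)$ contains $\pi^{-1}(O)$ for some basic WOT-neighborhood $O=\{T:\ |\langle T\zeta_i,\zeta_i'\rangle|<\eps,\ i=1,\dots,k\}$ of $0$, so it suffices to control the mass of $c_n$ off $\pi^{-1}(O)$. Since $s\notin\pi^{-1}(O)$ forces $|\langle\pi(s)\zeta_i,\zeta_i'\rangle|\ge\eps$ for some $i$, one has
\[\sum_{s\notin\pi^{-1}(O)}|\widehat{c_n}(s)|^2\ \le\ \frac1{\eps^2}\sum_{i=1}^k\ \sum_{s\in\Gamma}|\widehat{c_n}(s)|^2\,|\langle\pi(s)\zeta_i,\zeta_i'\rangle|^2,\]
and the right-hand side tends to $0$ by (iv). Hence $(c_n)$ has full mass on $X(\pi)$.

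The main obstacle is the forward direction (i) $\Rightarrow$ (ii). The key point is that for fixed $\zeta,\zeta'\in H\ot K$ the matrix-coefficient function $F(s):=\langle(\pi\ot\rho)(s)\zeta,\zeta'\rangle$ lies in the ideal $I_0(X(\pi))$: for simple tensors it factors as $\langle\pi(s)\zeta_1,\zeta_1'\rangle\langle\rho(s)\zeta_2,\zeta_2'\rangle$, where the first factor vanishes along every $\omega\in X(\pi)$ while the second stays bounded, and the general case follows since $I_0(X(\pi))$ is norm closed. It is crucial here that $X(\pi)$ depends on $\pi$ alone, so that the arbitrary representation $\rho$ contributes only a bounded factor. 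Reducing to $\xi=\zeta\ot\delta_h$, $\eta=\zeta'\ot\delta_{h'}$ by density, expanding $\langle c_n\xi d_n,\eta\rangle=\sum_s\widehat{c_n}(s)\,\widehat{d_n}(h^{-1}s^{-1}h')\,F(s)$, and applying Cauchy--Schwarz gives
\[|\langle c_n\xi d_n,\eta\rangle|\ \le\ \Big(\sum_s|\widehat{c_n}(s)|^2|F(s)|^2\Big)^{1/2}\,\|d_n\|_2.\]
The factor $\|d_n\|_2\le\|d_n\|$ is uniformly bounded, and since $F\in I_0(X(\pi))$ and $(c_n)$ has full mass on $X(\pi)$, the first factor tends to $0$ (split the sum at the neighborhood $\{|F|<\eps\}$ of $X(\pi)$ and use full mass). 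This yields (ii) and closes the cycle. The only delicate bookkeeping is the density reduction to vectors of the form $\zeta\ot\delta_h$, which is routine given the uniform bound $\|c_n\|\,\|d_n\|$ on the bilinear forms $(\xi,\eta)\mapsto\langle c_n\xi d_n,\eta\rangle$.
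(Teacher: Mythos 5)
Your proposal is correct and follows essentially the same route as the paper: the same cycle of implications, the same Fourier-expansion identity $\sum_s|\widehat{c_n}(s)|^2|\langle\pi(s)\xi_0,\eta_0\rangle|^2$, the same Chebyshev-type estimate via Lemma~\ref{lem:wotnbhd} for $(iv)\Rightarrow(i)$, and the same Cauchy--Schwarz splitting over a neighborhood of $X(\pi)$ for $(i)\Rightarrow(ii)$. The only cosmetic differences are that the paper absorbs the auxiliary representation by noting $X(\pi)\subset X(\pi\ot\rho)$ and cites \cite{chifanpeterson} for the key estimate, whereas you prove the equivalent fact that matrix coefficients of $\pi\ot\rho$ lie in $I_0(X(\pi))$ and derive the estimate by hand.
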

\begin{proof}
Suppose $(i)$ holds. Note that for any representation $(\rho,K)$ we have $X(\pi) \subset X(\pi \ot \rho)$. So (i) implies that $(c_n)$ has full mass on $X(\pi \ot \rho)$. So in order to prove $(ii)$ we may as well replace $\pi$ with $\pi \ot \rho$ and assume that $(\rho,K)$ is trivial.
As $(c_n)$ and $(d_n)$ are uniformly bounded it suffices for $(2)$ to consider $\xi$ and $\eta$ of the form $\xi = \xi_0 \otimes \delta_e$, $\eta = \eta_0 \otimes \delta_e$ with $\xi_0, \eta_0 \in H$. Take $\eps > 0$ and let $O = \{ T \in (B(H) )_1 \mid | \langle T \xi_0, \eta_0 \rangle | < \varepsilon \}$. We then compute as in Lemma 2.5 of \cite{chifanpeterson}
\[| \langle c_n \xi d_n, \eta \rangle |
\leq \| \xi_0 \| \| \eta_0 \| \| d_n \|_2 \| P_{\pi^{-1}(O)^c}(c_n) \|_2 + \varepsilon \| c_n \|_2 \| d_n \|_2.\]
As $(c_n)$ has full mass on $X$ and as $\varepsilon > 0$ was arbitrary condition $(ii)$ then follows.

The implications $(ii) \Rightarrow (iii) \Rightarrow (iv)$ are obvious. Suppose now that $(iv)$ holds. Fix $\varepsilon > 0$ and take a neighborhood $U$ of $X(\pi)$. By Lemma~\ref{lem:wotnbhd} there is a weak operator topology neighborhood $O$ of $0$ so that $\pi^{-1}(O) \subset U$. We may take $O$ of the form 
\[O = \{ T \in (B(H))_1 \mid \sum_{i = 1}^k | \langle T \xi_i, \eta_i \rangle |^2 < \varepsilon \}.\]
Then from $(iv)$ we have $\lim_{n \to \infty} \sum_{i = 1}^k  \langle c_n (\xi_i \ot \overline{\xi_i} \otimes \delta_e) c_n^*, \eta_i \ot \overline{\eta_i} \otimes \delta_e \rangle = 0$, whereas, writing $c_n = \sum_{g \in \Gamma} c_{n,g}u_g$ for the Fourier expansion, we also have
\begin{align*}
\lim_{n \to \infty} \sum_{i = 1}^k  \langle c_n (\xi_i \ot \overline{\xi_i} \otimes \delta_e) c_n^*, \eta_i \ot \overline{\eta_i} \otimes \delta_e \rangle & = \lim_n \sum_{i = 1}^k \sum_{g \in \Gamma} \vert c_{n,g}\vert^2\vert \langle \pi(g) \xi_i, \eta_i \rangle \vert^2\\
& \geq \varepsilon \limsup_{n \to \infty} \| P_{\pi^{-1}(O)^c}(c_n) \|_2^2\\
& \geq \varepsilon \limsup_{n \to \infty} \| P_{U^c}(c_n) \|_2^2.
\end{align*}
Thus, $(c_n)$ has full mass on $X(\pi)$.
\end{proof}

The gain in $(iv)$ above compared to $(iii)$ is that the inner product $\langle u ( \xi \otimes \delta_e ) u^*, \eta \otimes \delta_e \rangle$ becomes non-negative for all $u$ as soon as $\xi$ and $\eta$ are as in $(iv)$. This will be used in the proof of the following standard weak mixing/compact dichotomy (see, e.g.\  \cite{Po01}).

\begin{prop}[Weak mixing/compact dichotomy]
Let $B \subset L\Gamma$ be a von Neumann subalgebra, and $\mathcal G \subset \mathcal U(B)$ a group which generates $B$ as a von Neumann algebra.  The following are equivalent:
\begin{enumerate}[(i)]
\item Some net of unitaries $(u_n) \subset \mathcal G$ has full mass on $X(\pi)$.
\item The $L\Gamma$-bimodule $H \ot \overline{H} \otimes \ell^2 \Gamma$ has no non-zero $B$-central vectors. 
\end{enumerate}
\end{prop}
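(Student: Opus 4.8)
The plan is to prove the two implications separately, using throughout the characterizations of full mass from Lemma~\ref{lem:mixingsequence}. Write $\cK := H \ot \overline{H} \ot \ell^2(\Gamma)$ for the bimodule in the statement; it is the one attached to $\pi \ot \opi$, so that conjugation $\zeta \mapsto v\zeta v^*$ by a unitary $v \in L\Gamma$ is an isometry of $\cK$. First I would record that a vector $\zeta \in \cK$ is $B$-central as soon as $v\zeta = \zeta v$ for all $v \in \mathcal G$: the set $\{a \in L\Gamma \mid a\zeta = \zeta a\}$ is a $\sigma$-weakly closed subalgebra (the bimodule actions being normal), hence equals $B = \mathcal G\dpr$ once it contains $\mathcal G$. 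For the implication $(i) \Rightarrow (ii)$, let $(u_n) \subset \mathcal G$ have full mass on $X(\pi)$ and suppose $\zeta \in \cK$ is $B$-central. Then $u_n \zeta u_n^* = \zeta$, so $\langle u_n\zeta u_n^*, \zeta\rangle = \|\zeta\|^2$ for all $n$; on the other hand, applying Lemma~\ref{lem:mixingsequence}\,$(ii)$ to the representation $(\opi, \overline{H})$ with $c_n = u_n$, the uniformly bounded net $d_n = u_n^*$, and $\xi = \eta = \zeta$, gives $\lim_n \langle u_n\zeta u_n^*, \zeta\rangle = 0$. Comparing, $\|\zeta\|^2 = 0$, so there is no non-zero $B$-central vector.

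For $(ii) \Rightarrow (i)$ I would argue by contraposition, constructing a non-zero central vector from the failure of $(i)$. For $\xi_0, \eta_0 \in H$ and $u \in \mathcal G$ set
\[ f_{\xi_0,\eta_0}(u) := \langle u(\xi_0 \ot \overline{\xi_0} \ot \delta_e) u^*, \eta_0 \ot \overline{\eta_0} \ot \delta_e\rangle = \sum_{g \in \Gamma} |\wh{u}(g)|^2 \, |\langle \pi(g)\xi_0, \eta_0\rangle|^2 \geq 0, \]
where $\wh{u}(g)$ denote the Fourier coefficients of $u$ and the second equality is the computation in the proof of Lemma~\ref{lem:mixingsequence}. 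By the equivalence $(i) \Leftrightarrow (iv)$ there, $(i)$ amounts to the existence of a net $(u_n) \subset \mathcal G$ with $f_{\xi_0,\eta_0}(u_n) \to 0$ for all $\xi_0,\eta_0 \in H$; directing finite subsets $F \subset H$ by inclusion and $\eps > 0$ by reverse order, such a net exists exactly when for every finite $F$ and every $\eps > 0$ some $u \in \mathcal G$ satisfies $f_{\xi_0,\eta_0}(u) < \eps$ for all $\xi_0,\eta_0 \in F$. Hence the failure of $(i)$ yields a finite $F \subset H$ and $\eps_0 > 0$ such that each $u \in \mathcal G$ admits $\xi_0,\eta_0 \in F$ with $f_{\xi_0,\eta_0}(u) \geq \eps_0$. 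Putting $\Omega := \sum_{\xi_0 \in F} \xi_0 \ot \overline{\xi_0} \ot \delta_e \in \cK$ and using that every $f_{\xi_0,\eta_0}$ is non-negative, I obtain
\[ \langle u\Omega u^*, \Omega\rangle = \sum_{\xi_0,\eta_0 \in F} f_{\xi_0,\eta_0}(u) \geq \eps_0 \quad \text{for all } u \in \mathcal G. \]

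Finally I would run the standard minimal-norm argument. Let $C \subset \cK$ be the norm-closed convex hull of $\{u\Omega u^* \mid u \in \mathcal G\}$ and $\zeta_0$ its unique element of minimal norm. For $v \in \mathcal G$ the isometry $\zeta \mapsto v\zeta v^*$ permutes the generating set (since $\mathcal G$ is a group and $v^* \in \mathcal G$), hence preserves $C$ and fixes its unique minimal-norm vector, so $v\zeta_0 v^* = \zeta_0$; by the first paragraph $\zeta_0$ is $B$-central. Moreover $\langle \cdot, \Omega\rangle \geq \eps_0$ holds on the generating set, hence on $C$ by convexity and continuity, so $\langle \zeta_0, \Omega\rangle \geq \eps_0 > 0$ and $\zeta_0 \neq 0$, contradicting $(ii)$. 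The only genuine subtlety — the main obstacle — is the bookkeeping that turns the net-theoretic statement $(i)$ into the uniform lower bound $\inf_{u \in \mathcal G}\langle u\Omega u^*, \Omega\rangle \geq \eps_0$: this uses condition $(iv)$ of Lemma~\ref{lem:mixingsequence} both to reduce full mass to the scalars $f_{\xi_0,\eta_0}$ and, crucially, to guarantee their non-negativity, which is exactly what lets a single large term dominate the full sum $\langle u\Omega u^*, \Omega\rangle$. The remaining steps are routine.
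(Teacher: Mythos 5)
Your proof is correct and follows essentially the same route as the paper: the forward direction is the same application of Lemma~\ref{lem:mixingsequence}$(ii)$, and the converse is the same contraposition --- negating $(i)$ via the equivalence with $(iv)$, using non-negativity of the quantities $\langle u(\xi_0\ot\overline{\xi_0}\ot\delta_e)u^*,\eta_0\ot\overline{\eta_0}\ot\delta_e\rangle$ to obtain a uniform lower bound over $\mathcal G$, and then taking the minimal-norm element of the closed convex hull of conjugates. The only cosmetic difference is that you pair the single diagonal vector $\Omega=\sum_{\xi_0\in F}\xi_0\ot\overline{\xi_0}\ot\delta_e$ against itself inside $H\ot\overline{H}\ot\ell^2(\Gamma)$, whereas the paper works with separate vectors $\oplus_i\,\xi_i\ot\overline{\xi_i}$ and $\oplus_i\,\eta_i\ot\overline{\eta_i}$ in the amplification $(H\ot\overline{H})^{\oplus k}\ot\ell^2(\Gamma)$ and extracts a non-zero coordinate at the end; your variant is slightly cleaner but not a genuinely different argument.
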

\begin{proof}
If some net of unitaries $(u_n) \subset \mathcal G$ has full mass on $X(\pi)$ then from condition $(ii)$ of Lemma~\ref{lem:mixingsequence} we see that $H \ot \oH \otimes \ell^2 \Gamma$ can have no non-zero $B$-central vectors. 

Conversely, assume that there is no net of unitaries $(u_n) \subset \mathcal G$ which has full mass on $X(\pi)$. 

{\bf  Claim.} There exist $\eps >0$ and vectors $\xi_1, \dots, \xi_k \in H$, $\eta_1,\dots,\eta_k \in H$ such that 
\[ \sum_{i = 1}^k  \langle u (\xi_i \ot \overline{\xi_i} \otimes \delta_e) u^*, \eta_i \ot \overline{\eta_i} \otimes \delta_e \rangle \geq \eps, \text{ for all } u \in \cG.\]

Indeed since each term in the above sum is non-negative, if we find $u \in \cG$ such that the reverse inequality holds, then $ \langle u (\xi_i \ot \overline{\xi_i} \otimes \delta_e) u^*, \eta_i \ot \overline{\eta_i} \otimes \delta_e \rangle < \eps$ for all $i$. So if the claim did not hold we could easily construct a net $(c_n)$ of elements of $\cG$ satisfiying $(iv)$ of Lemma \ref{lem:mixingsequence}. This would contradict our assumption.

Now, denote by $\xi := \oplus_{i=1}^k \xi_i \ot \overline{\xi_i}$ and $\eta := \oplus_{i = 1}^k \eta_i \ot \overline{\eta_i}$ in $(H \ot \oH)^{\oplus k}$. The claim amounts to
\[\langle u (\xi \otimes \delta_e) u^*, \eta \otimes \delta_e \rangle \geq \eps, \text{ for all } u \in \cG.\]
Denote by $\txi \in (H \otimes \oH)^{\oplus k} \ot \ell^2(\Gamma)$ the unique element of minimal norm in the convex closure of $(\{ u (\xi \otimes \delta_e) u^*\, , \, u \in \cG \})$. Then $\txi$ is $B$-central (since $\mathcal G$ generates $B$ as a von Neumann algebra) and satisfies $\langle \txi, \eta \ot \delta_e \rangle \geq \varepsilon$, hence $\txi \not= 0$. Identifying $(H \otimes \oH)^{\oplus k} \ot \ell^2(\Gamma)$ with $(H \otimes \oH \ot \ell^2(\Gamma))^{\oplus k}$, we find that some coordinate of $\txi$ is a non-zero $B$-central vector inside $H \otimes \oH \ot \ell^2(\Gamma)$.
\end{proof}

%%%%%%%%%%
\section{Properly proximal groups and Cartan subalgebras}

In this section, we exploit proximal pieces of a group $\Gamma$ to study its von Neumann algebra $L\Gamma$. This relies on the following definition.

\begin{defn}\label{patched cv}
We say that a discrete group $\Gamma$ is a {\it properly proximal group} if it admits a finite family of continuous actions on compact spaces $\Gamma \actson K_i$, $i \geq 0$, and probability measures $\eta_i \in \Prob(K_i)$, $i \geq 0$, such that:
\begin{itemize}
\item For all $i$, there is no $\Gamma$-invariant Borel probability measure on $K_i$;
\item $\bigcup_i \partial_{\eta_i}\Gamma = \partial \Gamma$.
\end{itemize}
\end{defn}

Let us mention that amenable groups are never properly proximal, since they obviously never satisfy the first condition above. As we will show, properly proximal groups are in fact never inner amenable (see Proposition \ref{inner amenable}).

\begin{rem}
One may define an, a priori, more general notion of proper proximality by using actions on arbitrary, not necessarily commutative, C$^*$-algebras. However, as we explain below, this leads to the same notion. More precisely, assume that a discrete group $\Gamma$ admits a finite family of continuous actions on C$^*$-algebras $\Gamma \actson A_i$, $i \geq 0$, and states $\varphi_i \in \cS(A_i)$, $i \geq 0$, such that:
\begin{itemize}
\item For all $i$, there is no $\Gamma$-invariant state on $A_i$;
\item $\bigcup_i \partial_{\varphi_i}\Gamma = \partial \Gamma$, where $\partial_{\varphi_i}\Gamma = \{\omega\in\partial\Gamma \mid \text{$\lim_{g\rightarrow\omega}(gh\varphi_i-g\varphi_i)=0$, $^*$-weakly, $\forall h\in\Gamma$}\}$.
\end{itemize}
Define $K_i := \cS(A_i)$ and $\eta_i=\delta_{\varphi_i}\in\text{Prob}(K_i)$, for $i\geq 0$. Then $K_i$ is compact in the weak-$^*$ topology, and the action $\Gamma\curvearrowright K_i$ is continuous. Since $\partial_{\eta_i}\Gamma = \partial_{\varphi_i} \Gamma$, we have $\bigcup_i\partial_{\eta_i}\Gamma = \partial\Gamma$. Moreover, there is no $\Gamma$-invariant probability measure on $K_i$, for all $i$. If $\eta\in\text{Prob}(K_i)$ were $\Gamma$-invariant, then $\varphi=\int_{K_i}\psi\;\text{d}\eta(\psi)$ would be a $\Gamma$-invariant state on $A_i$. This shows that $\Gamma$ is properly proximal.
\end{rem}

%%%%%
\subsection{Equivalent formulations}

The aim of this section is to prove the following result, which asserts among other things that proper proximality can always be observed with a single action. The main implication $(iv) \Rightarrow (iii)$ is due to Narutaka Ozawa. We warmly thank him for allowing us to include his argument here.

\begin{thm}\label{carac pp}
Consider a discrete countable group $\Gamma$, and let $X \subset \partial \Gamma$ be a boundary piece. The following facts are equivalent.
\begin{enumerate}[(i)]
\item There are continuous actions $\Gamma \actson K_i$, $i = 1, \ldots, k$ on compact spaces $K_i$ with probability measures $\eta_i \in {\rm Prob}(K_i)$ such that there is no $\Gamma$-invariant probability measure on any $K_i$ and such that $X \subset \cup_{i = 1}^k \partial_{\eta_i} \Gamma$.
\item There is a single continuous action $\Gamma \actson K$ on a compact space $K$ with a probability measure $\eta \in \Prob(K)$ such that there is no $\Gamma$-invariant probability measure on $K$ and $X = \partial_\eta \Gamma$.
\item There is no left-$\Gamma$-invariant state on $C(X)^{\Gamma_r}$;
\item There is no left-$\Gamma$-invariant state on $(C(X)^{**})^{\Gamma_r}$.
\end{enumerate}
In particular, if $X = \partial\Gamma$, all these conditions are equivalent to proper proximality of $\Gamma$.
\end{thm}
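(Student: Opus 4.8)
The plan is to establish the cycle $(iii)\Rightarrow(ii)\Rightarrow(i)\Rightarrow(iv)\Rightarrow(iii)$, which gives all four equivalences, and then to read off the final clause. Two of the forward arrows are immediate. For $(iii)\Rightarrow(ii)$ I would feed the boundary piece $X$ into Lemma~\ref{all proximal}: it produces a continuous action $\Gamma\actson K$ with $X=\partial_\eta\Gamma$, and its first bullet says exactly that the absence of a left-$\Gamma$-invariant state on $C(X)^{\Gamma_r}$ forces the absence of a $\Gamma$-invariant probability measure on $K$, which is $(ii)$. The arrow $(ii)\Rightarrow(i)$ is the trivial specialization $k=1$, $K_1=K$, $\eta_1=\eta$, using $X=\partial_\eta\Gamma\subset\partial_{\eta_1}\Gamma$. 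The arrow $(iv)\Rightarrow(iii)$ is treated last; it is the only serious one.

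The substance is in $(i)\Rightarrow(iv)$, where the finitely many actions must be merged, and the bidual is precisely what makes this possible. Write $X_i:=\partial_{\eta_i}\Gamma$. Since no $K_i$ carries an invariant measure, each $\eta_i$ is non-invariant, so $X_i$ is a boundary piece and Lemma~\ref{lem: theta} furnishes a left-equivariant, right-invariant u.c.p.\ map $\theta_i\colon C(K_i)\to C(X_i)^{\Gamma_r}$. Set $Y_i:=X\cap X_i$; these are closed, right-$\Gamma$-invariant, and cover $X$. The reason for passing to $C(X)^{**}$ is that the indicator projections $q_i:=\mathbf 1_{Y_i}$ now live there: by Notation~\ref{nota1} they are right-invariant central projections with $\bigvee_i q_i=\mathbf 1_X=1$, and the restriction $C(X)\to C(Y_i)$ identifies $C(Y_i)^{**}$ with the corner $q_iC(X)^{**}$ compatibly with both $\Gamma$-actions. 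Composing $\theta_i$ with the (left- and right-equivariant) restriction $C(X_i)\to C(Y_i)$ and this identification yields left-$\Gamma$-equivariant, right-invariant u.c.p.\ maps $\Theta_i\colon C(K_i)\to (C(X)^{**})^{\Gamma_r}$ with $\Theta_i(1)=q_i$.

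Now I would argue by contradiction. Suppose $\varphi$ is a left-$\Gamma$-invariant state on $(C(X)^{**})^{\Gamma_r}$. Then each $\varphi\circ\Theta_i$ is a positive functional on $C(K_i)$ of total mass $\varphi(q_i)$, and it is left-$\Gamma$-invariant: from $\Theta_i(g\cdot f)=\lambda_g\Theta_i(f)$ (note $\lambda_g\Theta_i(f)$ is still right-invariant, as the actions commute) and $\varphi\circ\lambda_g=\varphi$ we get $(\varphi\circ\Theta_i)(g\cdot f)=(\varphi\circ\Theta_i)(f)$. Orthogonalizing, the projections $e_1:=q_1$ and $e_i:=q_i\prod_{j<i}(1-q_j)$ are orthogonal, right-invariant, and satisfy $\sum_i e_i=\bigvee_i q_i=1$, so $\sum_i\varphi(q_i)\geq\sum_i\varphi(e_i)=1$ and hence $\varphi(q_{i_0})>0$ for some $i_0$. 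Normalizing, $\varphi(q_{i_0})^{-1}\,\varphi\circ\Theta_{i_0}$ is a $\Gamma$-invariant element of $\Prob(K_{i_0})$, contradicting hypothesis $(i)$. This proves $(iv)$.

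The hard part is the remaining arrow $(iv)\Rightarrow(iii)$, due to Ozawa. By contraposition one must extend a left-$\Gamma$-invariant state from $C(X)^{\Gamma_r}$ to the much larger algebra $(C(X)^{**})^{\Gamma_r}$ while \emph{preserving} left-invariance. The obstruction is genuine: taking right-$\Gamma$-fixed points does not commute with passing to the bidual, because $\Gamma$ is typically non-amenable and there is no averaging over the right action, so $C(X)^{\Gamma_r}$ need not be weak-$*$ dense in $(C(X)^{**})^{\Gamma_r}$ and a naive Hahn--Banach extension destroys invariance. The intended route is to first extend the given state to a (non-invariant) state on $C(X)^{**}$ and then to recover left-invariance on the fixed-point algebra through a weak-$*$ limit/averaging argument that leverages the invariance already present on $C(X)^{\Gamma_r}$; this is exactly the ingredient we borrow from Ozawa, and granting it the cycle closes. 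Finally, when $X=\partial\Gamma$ one has $\partial_{\eta_i}\Gamma\subset\partial\Gamma$ for every non-invariant $\eta_i$ by Lemma~\ref{first facts}$(3)$, so the inclusion $X\subset\bigcup_i\partial_{\eta_i}\Gamma$ in $(i)$ is automatically an equality; thus $(i)$ becomes verbatim Definition~\ref{patched cv}, and all four conditions are equivalent to $\Gamma$ being properly proximal.
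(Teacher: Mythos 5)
Your implications $(iii)\Rightarrow(ii)$, $(ii)\Rightarrow(i)$, and $(i)\Rightarrow(iv)$ are all correct. In fact your $(i)\Rightarrow(iv)$ is a clean variant of the paper's argument: instead of invoking Lemma~\ref{transfer} and restricting the hypothetical state to the corner $q_i(C(X)^{**})^{\Gamma_r}$ as the paper does, you push forward through the u.c.p.\ maps $\Theta_i$ and land directly on an invariant probability measure on some $K_{i_0}$, contradicting $(i)$; this also avoids the paper's slight abuse of identifying $q_iC(X)^{**}$ with $C(X_i)^{**}$ rather than $C(X\cap X_i)^{**}$.

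However, there is a genuine gap: you do not prove $(iv)\Rightarrow(iii)$, which is the entire substance of the theorem (it is exactly the implication attributed to Ozawa, and it is why the statement is a theorem rather than a remark). Writing ``this is exactly the ingredient we borrow from Ozawa, and granting it the cycle closes'' is circular --- that ingredient \emph{is} the claim you are asked to establish. Worse, the route you sketch (extend the invariant state from $C(X)^{\Gamma_r}$ to a state on $C(X)^{**}$, then ``recover left-invariance on the fixed-point algebra through a weak-$*$ limit/averaging argument'') is not how the implication works, and it is doubtful it can be made to work: as you yourself observe, $\Gamma$ is typically non-amenable, so there is no averaging procedure over either the left or the right action available to restore invariance after a Hahn--Banach extension. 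The paper's proof runs in the opposite direction and is quantitative: assuming $(iv)$, the absence of \emph{any} non-zero left-invariant functional on $A:=(C(X)^{**})^{\Gamma_r}$ implies, by Hahn--Banach, that the span of $\{x-g\cdot x\}$ is norm dense in $A$, so one can fix $g_1,\dots,g_d\in\Gamma$ and $x_1,\dots,x_d\in A$ with $\Vert 1-\sum_{k}(x_k-g_k\cdot x_k)\Vert<1/2$. This finite ``almost-coboundary certificate'' is then transported down to $C(X)^{\Gamma_r}$: one approximates each $x_k$ by elements of $C(X)$ using Goldstine's theorem and convex combinations (Lemma~\ref{Banach}), lifts to $\ell^\infty(\Gamma)$, and then patches along an increasing quasi-central approximate unit $(\alpha_n)$ of $I_0(X)$ (Lemma~\ref{quasicentral}) via telescoping sums $y_k=\sum_n(\alpha_{n+1}-\alpha_n)y_k^{i(n)}$, producing genuinely right-invariant elements $z_k\in C(X)^{\Gamma_r}$ satisfying $\Vert 1-\sum_k(z_k-g_k\cdot z_k)\Vert\leq 1/2$; this inequality rules out a left-invariant state on $C(X)^{\Gamma_r}$, giving $(iii)$. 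Without an argument of this kind (or some substitute), your proof establishes only the equivalence of $(i)$, $(ii)$, $(iii)$ together with $(iii)\Rightarrow(iv)$ --- wait, not even that: your cycle is $(iii)\Rightarrow(ii)\Rightarrow(i)\Rightarrow(iv)$, so with the last arrow missing none of the four conditions are proved equivalent; you only get the one-directional chain.
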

\begin{proof}
$(i) \Rightarrow (iv)$. Consider finitely many actions $\Gamma \actson K_i$ with measures $\eta_i \in \Prob(K_i)$, as in (i). For each $i$, set $X_i := \partial_{\eta_i}\Gamma$. By Lemma \ref{transfer}, we know that for each $i$, there is no left-$\Gamma$-invariant state on $C(X_i)^{\Gamma_r}$. In particular there is no $\Gamma$-invariant state on $(C(X_i)^{**})^{\Gamma_r}$. 

Denote by $p_i \in C(X)^{**}$ the support projection of the ideal $C_0(X \setminus X_i)$ and by $q_i := 1 - p_i$, so that $C(X_i)^{**} = q_iC(X)^{**}$. Since $X \subset \cup_i X_i$, we see that  $\vee q_i = 1$. Moreover, the projections $q_i$ are left and right $\Gamma$-invariant, and in particular, $(C(X_i)^{**})^{\Gamma_r} = q_i(C(X)^{**})^{\Gamma_r}$. If $\varphi$ is a left-$\Gamma$-invariant state on $(C(X)^{**})^{\Gamma_r}$, then there exists some $i$ such that $\varphi(q_i) \neq 0$. Then the restriction of $\varphi$ to $q_i(C(X)^{**})^{\Gamma_r}$ is a non-zero left $\Gamma$-invariant positive linear functional on $(C(X_i)^{**})^{\Gamma_r}$, which contradicts the previous paragraph.

$(iv) \Rightarrow (iii)$. Denote by $A := (C(X)^{**})^{\Gamma_r}$. If there is no $\Gamma$-invariant state on $A$ then there is no non-zero $\Gamma$-invariant linear functional at all. This follows for instance from the uniqueness of the polar decomposition of normal linear functionals on $A^{**}$. It therefore follows from the Hahn-Banach theorem that the linear span of $\{x - g \cdot x \mid x \in A, g \in \Gamma\}$ is norm dense inside $A$. We may thus find $g_1, \dots, g_d \in \Gamma$ and $x_1,\dots,x_d \in A$ such that 
\begin{equation}\label{almost 1}\Vert 1 - \sum_{k = 1}^d (x_k - g_k \cdot x_k)\Vert < 1/2.\end{equation}

The elements $x_k$ belong to $C(X)^{**}$, so we may find for each $k$, a net $(x_k^i)_{i \in I}$ in $C(X)$ which converges to $x_k$ in the weak-* topology of $C(X)^{**}$. By Goldstine's theorem, we may assume that for each $i$ and $k$, we have $\Vert x_k^i\Vert \leq \Vert x_k \Vert$. Since the elements $x_k$ are right-$\Gamma$-invariant, we have the weak-* convergence $\lim_i x_k^i - x_k^i\cdot g = 0$ for all $k$ and $g \in \Gamma$. Thus we may replace the $x_k^i$ by convex combinations to assume that this convergence holds in norm: $\lim_i \Vert x_k^i - x_k^i \cdot g \Vert = 0$, for all $k$ and $g \in \Gamma$. Better, we may further take convex combinations to assume that, in addition,
\[\Vert 1 - \sum_{k = 1}^d (x_k^i - g_k \cdot x_k^i)\Vert < 1/2, \text{ for all } i \in I.\]
This last assertion follows from a classical fact on Banach spaces recorded in Lemma \ref{Banach} below.

The task is now to combine the elements $x_k^i$ to produce elements $z_k \in C(X)$ which are actually right $\Gamma$-invariant and such that \eqref{almost 1} holds with $z_k$'s in the place of $x_k$'s.
Recall that $C(X)$ is naturally identified with $\ell^\infty(\Gamma)/I_0(X)$.

For each $i$, we may take lifts $y_k^i \in \ell^\infty(\Gamma)$ of $x_k^i$, $k = 1, \ldots, d$, such that $\| y_k^i \| \leq \| x_k \|$, and we may also take a lift $b^i \in \ell^\infty(\Gamma)$ of $\sum_{k = 1}^d (x_k^i - g_k \cdot x_k^i)$ so that $\| 1 - b^i \| < 1/2$. 

Let $B_n \subset \Gamma$ be an increasing sequence of finite sets such that $\Gamma = \cup_n B_n$. For each $n \geq 1$ we may find an index $i(n) \in I$ such that $\| x_k^{i(n)} - x_k^{i(n)} \cdot g \| < 2^{-n}$ for all $g \in B_n$, and $k = 1, \ldots, d$. 

By Lemma~\ref{quasicentral} below there exists an increasing sequence $\alpha_n \in I_0(X)$ so that $0 \leq \alpha_n \leq 1$, $\alpha_n \to 1$ pointwise, and such that for all $g \in B_n$ and $k = 1, \ldots, d$ we have
\begin{equation}\label{eq:1}
\| (1 - \alpha_n) (y_k^{i(n)} - y_k^{i(n)} \cdot g) \| < 2^{-n}; \ \ \ \ \| \alpha_n - \alpha_n \cdot g \| < 2^{-n};
\end{equation}
\begin{equation}\label{eq:2}
\| (1 - \alpha_n) ( b^{i(n)} - \sum_{k = 1}^d (y_k^{i(n)} - g_k \cdot y_k^{i(n)} ) ) \| < 2^{-n}; \ \ \ \ \
\| \alpha_n - g_k \cdot \alpha_n  \| < 2^{-n}.
\end{equation}

For $k = 1, \ldots, d$ we define $y_k := \sum_{n \geq 1} (\alpha_{n + 1} - \alpha_n) y_k^{i(n)}$, and we define $b := \alpha_1 + \sum_{n \geq 1} (\alpha_{n + 1} - \alpha_n) b^{i(n)}$. Note that $\| 1 - b \| \leq 1/2$, since $\alpha_1 + \sum_{n \geq 1} (\alpha_{n + 1} - \alpha_n) = 1$.

If $g \in B_m$, then from (\ref{eq:1}) we have 
\begin{align}
& \| ( \sum_{n \geq m} (\alpha_{n + 1} - \alpha_n) y_k^{i(n)} ) - ( \sum_{n \geq m} (\alpha_{n + 1} - \alpha_n) y_k^{i(n)} ) \cdot g  \| \nonumber  \\
& \leq \sum_{n \geq m} \|  (\alpha_{n + 1} - \alpha_n) -  (\alpha_{n + 1} - \alpha_n) \cdot g \| \| x_k \|
+ \sum_{n \geq m} \| (\alpha_{n + 1} - \alpha_n) ( y_k^{i(n)} - y_k^{i(n)} \cdot g ) \| \nonumber \\
& \leq2 \sum_{n \geq m} 2^{-n}  \| x_k \|  + \sum_{n \geq m} \| (1 - \alpha_n) ( y_k^{i(n)} - y_k^{i(n)} \cdot g ) \| \nonumber \\ 
&\leq 2^{-m + 2} ( \| x_k \| + 1). \nonumber 
\end{align}
Thus, $y_k - y_k \cdot g \in I_0(X)$ for all $g \in \Gamma$, $k = 1, \ldots, d$.

Similarly, from (\ref{eq:2}) we have
\begin{align}
& \|  \sum_{n \geq m}  (\alpha_{n + 1} - \alpha_n) b^{i(n)} - \sum_{k = 1}^d ( \sum_{n \geq m}   (\alpha_{n + 1} - \alpha_n) y_k^{i(n)} ) - g_k \cdot ( \sum_{n \geq m}   (\alpha_{n + 1} - \alpha_n) y_k^{i(n)} )  \| \nonumber \\
 & \leq \sum_{k = 1}^d \sum_{n \geq m} \|  (\alpha_{n + 1} - \alpha_n)  - g_k \cdot (\alpha_{n + 1} - \alpha_n) \| \| x_k \| \nonumber \\
&+  \sum_{n \geq m} \| (\alpha_{n + 1} - \alpha_n) ( b^{i(n)} - \sum_{k = 1}^d (y_k^{i(n)} - g_k \cdot y_k^{i(n)} ) ) \| \nonumber \\
&\leq 2 \sum_{n \geq m} 2^{-n} \sum_{k = 1}^d \| x_k \| + \sum_{n \geq m} \| (1 - \alpha_n) ( b^{i(n)} - \sum_{k = 1}^d (y_k^{i(n)} - g_k \cdot y_k^{i(n)} ) ) \|  \nonumber \\
&\leq 2^{-m + 2} ( 1 + \sum_{k = 1}^d \| x_k \|). \nonumber 
\end{align}
Thus, $b - \sum_{k = 1}^d y_k - g_k \cdot y_k \in I_0(X)$. 

Hence, for each k, if we denote by $z_k \in \ell^\infty(\Gamma)/ I_0(X)$ the projection of $y_k \in \ell^\infty(\Gamma)$, then each $z_k$ is right $\Gamma$-invariant and $\| 1 - \sum_{k = 1}^d (z_k - g_k \cdot z_k) \| \leq 1/2$. This then rules out the existence of a left-$\Gamma$-invariant state on $C(X)^{\Gamma_r}$.

$(iii) \Rightarrow (ii)$. This follows from Lemma \ref{all proximal}.

$(ii) \Rightarrow (i)$. This is trivial.
\end{proof}

In the previous proof, we used the following lemmas:

\begin{lem}\label{Banach}
If $X$ is a Banach space and $(x_i)$ is a net of elements in $X$ which converges *-weakly to an element $x \in X^{**}$, then we have $\inf\{ \Vert y \Vert \mid y \in \conv(\{x_i\}) \} \leq \Vert x \Vert$.
\end{lem}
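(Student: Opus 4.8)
===

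\textbf{Statement to prove (Lemma~\ref{Banach}).} If $X$ is a Banach space and $(x_i)$ is a net in $X$ converging weak-$*$ to some $x \in X^{**}$, then $\inf\{\|y\| : y \in \conv(\{x_i\})\} \leq \|x\|$.

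The plan is to prove the contrapositive via a separation (Hahn--Banach) argument. Suppose for contradiction that every convex combination of the $x_i$ has norm strictly greater than $\|x\|$; more precisely, set $C := \overline{\conv}(\{x_i\})$, the norm-closed convex hull inside $X$, and suppose that $\inf\{\|y\| : y \in C\} > \|x\|$, i.e.\ that $C$ is disjoint from the closed ball $B$ of radius $\|x\|$ (or radius slightly larger than $\|x\|$) centered at $0$ in $X$. The goal is to exhibit a contradiction with the assumption that $x$ lies in the weak-$*$ closure of $\{x_i\}$ in $X^{**}$.

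First I would separate the two disjoint closed convex sets $C$ and the ball $B$ in the Banach space $X$ by a real hyperplane: by the geometric Hahn--Banach theorem there exist $f \in X^*$ and $\alpha \in \R$ with $\operatorname{Re} f(y) \geq \alpha$ for all $y \in C$ and $\operatorname{Re} f(b) \leq \alpha$ for all $b \in B$. Since $B$ is the ball of radius $r \geq \|x\|$, taking the supremum over $b \in B$ gives $r\|f\| \leq \alpha$, so in particular $\alpha \geq \|x\|\,\|f\|$. On the other hand, the separation gives $\operatorname{Re} f(x_i) \geq \alpha$ for every $i$. Now I would transport this inequality to the bidual: viewing $f$ as a weak-$*$ continuous functional on $X^{**}$ (this is exactly the defining property of elements of the predual $X^*$ acting on $X^{**}$), the weak-$*$ convergence $x_i \to x$ yields $\operatorname{Re} f(x) = \lim_i \operatorname{Re} f(x_i) \geq \alpha \geq \|x\|\,\|f\|$. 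But $|f(x)| \leq \|f\|\,\|x\|$ by the definition of the norm on $X^{**}$, so $\operatorname{Re} f(x) \leq \|x\|\,\|f\|$, forcing equality throughout and, after a small margin in the choice of $r > \|x\|$, an outright contradiction.

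The only subtlety — and the main point to get right — is the choice of the separating ball and the strictness of the inequalities, so that the final estimate is a genuine contradiction rather than a non-strict equality. I would handle this by first fixing $\eps > 0$, assuming toward a contradiction that $\|y\| \geq \|x\| + \eps$ for all $y \in \conv(\{x_i\})$, and separating $C$ from the open ball of radius $\|x\| + \eps$; the strict inequality $r\|f\| = (\|x\|+\eps)\|f\| \leq \alpha \leq \operatorname{Re} f(x) \leq \|x\|\,\|f\|$ then forces $\|f\| = 0$, hence $f = 0$, which contradicts the nontriviality of the separating functional (one may normalize $\|f\|=1$). Letting $\eps \to 0$ gives the desired bound $\inf\{\|y\| : y \in \conv(\{x_i\})\} \leq \|x\|$. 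The entire argument is standard functional analysis; the one thing worth double-checking is that the weak-$*$ topology on $X^{**}$ referenced in the hypothesis is the $\sigma(X^{**}, X^*)$-topology, so that functionals in $X^*$ are indeed continuous and the passage $\lim_i f(x_i) = f(x)$ is justified.
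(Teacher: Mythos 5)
Your proof is correct, and its skeleton is the same as the paper's: assume the infimum exceeds $\Vert x\Vert$ by a definite margin, separate $\conv(\{x_i\})$ from an open ball by a Hahn--Banach functional, identify the supremum of the functional over the ball as the radius times its norm, and play this against the weak-* convergence $f(x_i)\to x(f)$. The difference is in the endgame. The paper takes a multiplicative margin $\Vert x\Vert(1+\eps)$ and closes by invoking Goldstine's theorem to place $(1+\eps)x$ in the weak-* closure of the ball, yielding $(1+\eps)\varphi(x)\le\varphi(x)$, hence $\varphi(x)=0$, contradicting the strictly positive lower bound coming from the separation; the multiplicative margin is vacuous when $x=0$, so the paper must dispose of that case separately at the outset. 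You instead take an additive margin $\Vert x\Vert+\eps$ and close with the trivial bidual estimate $\operatorname{Re}\, x(f)\le \vert x(f)\vert\le\Vert x\Vert\,\Vert f\Vert$, getting $(\Vert x\Vert+\eps)\Vert f\Vert\le\Vert x\Vert\,\Vert f\Vert$ and hence $f=0$, contradicting the nontriviality of the separating functional. Your version is a little cleaner: it needs neither Goldstine's theorem nor the case split at $x=0$, since the additive margin produces the contradiction uniformly. The two points you flagged are indeed the ones to keep explicit in a final write-up: separate from the \emph{open} ball, so that the geometric Hahn--Banach theorem applies and guarantees $f\neq 0$; and work with real parts throughout if the scalars are complex.
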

\begin{proof}
If $x = 0$, then $x \in X$ and the result is known. So let us assume that $x \neq 0$.
Assume that there is $\eps > 0$ such that $\inf\{\Vert y \Vert, y \in \conv(\{x_i\})\} \geq \Vert x \Vert(1 + \eps)$.
Then by Hahn-Banach separation theorem, there exists a non-zero linear functional $\varphi \in X^*$ that separates the open ball $B_X(0,\Vert x \Vert(1 + \eps))$ from $\conv(\{x_i\})$ in the following sense:
\begin{equation}\label{truc}\sup\{\varphi(a) \mid \Vert a \Vert < \Vert x \Vert(1 + \eps))\} \leq \inf\{\varphi(y) , y \in \conv(\{x_i\})\}.\end{equation}
Note that the left term in the above equation is nothing but $\Vert x \Vert(1+\eps)\Vert \varphi \Vert$, which is non-zero since $x$ and $\varphi$ are non-zero.
By Goldstine's theorem, we know that $x$ belongs to the weak* closure of $\{a \in X \mid \Vert a \Vert \leq \Vert x \Vert\}$. So $(1 + \eps)x$ belongs to the weak-* closure of $B_X(0,\Vert x \Vert(1 + \eps))$. In particular
\[(1+\eps)\varphi(x) \leq \inf\{\varphi(y) , y \in \conv(\{x_i\})\} \leq \varphi(x).\]
This implies that $\varphi(x) = 0$ and hence both terms in \eqref{truc} are equal to $0$, a contradiction.
\end{proof}

\begin{lem}\label{quasicentral}
Let $A$ be a unital $C^*$-algebra with a closed ideal $I \subset A$. Suppose $\Gamma$ is a countable group which acts on $A$ by $*$-automorphisms which preserve $I$. If $I_0 \subset I$ is a countable set, then there exists an increasing sequence $\alpha_n \in I$, with $0 \leq \alpha_n \leq 1$, such that $\| (1 - \alpha_n) a \| \to 0$ for all $a \in I_0$, and such that $\| \alpha_n - g \cdot \alpha_n \| \to 0$ for all $g \in \Gamma$. 
\end{lem}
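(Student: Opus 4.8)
The plan is to recast the desired asymptotic $\Gamma$-invariance of the approximate unit as \emph{quasicentrality with respect to a family of unitaries}, which is exactly the setting of the classical quasicentral approximate unit theorem of Arveson. The key observation is that if each automorphism $g\cdot(-)$ is implemented by a unitary $u_g$ in a larger $C^*$-algebra, then $\|\alpha - g\cdot\alpha\| = \|[\alpha,u_g]\|$, so asymptotic invariance under $\Gamma$ becomes asymptotic commutation with the $u_g$'s. The natural ambient algebra is the reduced crossed product $B := A\rtimes_r\Gamma$: since $A$ is unital, $B$ contains $A$ unitally and honest unitaries $u_g$ with $u_g a u_g^* = g\cdot a$, and it carries the canonical conditional expectation $E\colon B\to A$, $E(\sum_h a_h u_h)=a_e$. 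Because $I$ is $\Gamma$-invariant, $J := I\rtimes_r\Gamma$ is a closed two-sided ideal of $B$ which contains $I$ (hence $I_0$), and $E$ maps $J$ into $I$. The role of $E$ is to push a quasicentral approximate unit of $J$ back down into $I$.

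Concretely I would proceed as follows. First reduce to the separable case: set $B_0 := C^*\big(I_0\cup\{u_g : g\in\Gamma\}\big)\subseteq B$, which is separable as it is generated by a countable set, and put $J_1 := J\cap B_0$. Then $J_1$ is a closed (hence $\sigma$-unital) ideal of $B_0$ containing $I_0$. Applying the sequential quasicentral approximate unit theorem to the inclusion $J_1\trianglelefteq B_0$ of separable $C^*$-algebras yields an increasing sequence $(f_n)$ in $J_1\subseteq J$ with $0\le f_n\le 1$ that is an approximate unit for $J_1$ and satisfies $\|[f_n,b]\|\to 0$ for every $b\in B_0$, in particular for every $u_g$. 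Finally I set $\alpha_n := E(f_n)\in I$.

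The verification then splits into four routine checks, all powered by the bimodule and equivariance properties of $E$. Since $E$ is unital and positive, $0\le\alpha_n\le1$, and $\alpha_n\le\alpha_{n+1}$ because $f_n\le f_{n+1}$; moreover $\alpha_n\in I$ because $E(J)\subseteq I$. For $a\in I_0\subseteq I\subseteq A$, the fact that $E$ is an $A$-bimodule map fixing $A$ pointwise gives $(1-\alpha_n)a = E\big((1-f_n)a\big)$, so $\|(1-\alpha_n)a\|\le\|a-f_n a\|\to 0$ as $a\in J_1$. For the invariance, the equivariance $E\circ\Ad(u_g) = (g\cdot)\circ E$ gives $g\cdot\alpha_n = E(u_g f_n u_g^*)$, whence $\|\alpha_n - g\cdot\alpha_n\| = \|E(f_n - u_g f_n u_g^*)\| \le \|[f_n,u_g]\| \to 0$.

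I expect the main obstacle to be bookkeeping rather than conceptual: $A$ and $I$ are not assumed separable, so one must carry out the reduction to $B_0$ and $J_1$ carefully (using that $J\cap B_0$ is automatically an ideal of $B_0$ and is $\sigma$-unital) in order to invoke the \emph{sequential} form of the quasicentral approximate unit theorem, and one must check that the standard properties of the reduced-crossed-product expectation—$A$-bimodularity, contractivity, and $\Gamma$-equivariance—hold as used. An alternative, entirely self-contained route would bypass the crossed product and reprove the theorem directly: restrict to the separable $\Gamma$-invariant subalgebra $A_0 := C^*\big(\bigcup_{g}g\cdot I_0\big)\subseteq I$, observe that for any approximate unit $(e_\lambda)$ of $A_0$ one has $e_\lambda - g\cdot e_\lambda\to 0$ weakly (both nets converge $\sigma$-weakly to $1$ in $A_0^{**}$), and then use Hahn--Banach to pass from weak to norm convergence of convex combinations, extracting an increasing sequence by the usual diagonal argument; here the delicate points are that convex combinations of approximate-unit elements still act as a unit on $I_0$ and that the sequence can be arranged to be increasing.
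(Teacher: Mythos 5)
Your proof is correct, and its core idea is the same as the paper's: pass to the crossed product $A \rtimes \Gamma$ so that the automorphisms $g\cdot(-)$ become inner, note that $J = I \rtimes \Gamma$ is an ideal there (this uses only $\Gamma$-invariance of $I$, as you say), and invoke Arveson's quasicentral approximate unit theorem. Where you genuinely diverge is in how the approximate unit is brought back into $I$. The paper starts from an approximate unit $(\upsilon_\lambda)$ of $I$ itself, observes that it is automatically an approximate unit for $J$, and runs Arveson's convex-combination argument inside $A \rtimes \Gamma$; since convex combinations of elements of $I$ stay in $I$, the resulting quasicentral approximate unit already lies in $I$, so no conditional expectation is needed, and a subsequence is extracted at the end using countability of $\Gamma$ and $I_0$. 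You instead take a quasicentral approximate unit $(f_n)$ of the crossed-product ideal itself, after a separable reduction to $B_0 = C^*\bigl(I_0 \cup \{u_g : g \in \Gamma\}\bigr)$ that lets you quote the sequential form of the theorem, and then push it down to $I$ via the canonical expectation $E\colon A \rtimes_r \Gamma \to A$, using its $A$-bimodularity, positivity, contractivity and equivariance $E \circ \Ad(u_g) = (g\cdot) \circ E$. Both mechanisms are sound, and each buys something: your route treats the quasicentral theorem as a black box applied to an arbitrary ideal, and the monotonicity requirement ($\alpha_n$ increasing) comes transparently from the sequential theorem plus positivity of $E$ --- a point the paper handles rather loosely, since convex combinations and subnet extraction do not obviously preserve increasing-ness; the paper's route is shorter, avoids the separable reduction and the expectation entirely, but relies on remembering that Arveson's construction proceeds by convex combinations of the given approximate unit. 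Your closing alternative (weak-to-norm convergence via Hahn--Banach inside a separable invariant subalgebra) is essentially a re-proof of Arveson's theorem specialized to this setting, so the crossed-product argument is the better choice to write up.
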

\begin{proof}
This is essentially contained in the proof of Theorem 1 in \cite{A77} (see also Theorem I.9.16 in \cite{D96}). Fix an approximate unit $\{ \upsilon_\lambda \}_{\lambda \in \Lambda}$ for $I$, and note that this is also an approximate unit for $J = I \rtimes \Gamma$. The proof of Theorem 1 in \cite{A77} then shows that after passing to convex combinations we may obtain an approximate unit $\{ \alpha_\lambda \}_{\lambda \in \Lambda}$ of $I$ which is quasi-central in $A \rtimes \Gamma$. In particular it follows that $\| \alpha_\lambda - g \cdot \alpha_\lambda \| \to 0$ for all $g \in \Gamma$. 

Since $\Gamma$ and $I_0$ are countable we may then take a subsequence $\{ \alpha_n \}_{n \in \mathbb N}$ such that $\| (1 - \alpha_n) a \| \to 0$ for all $a \in I_0$, and such that $\| \alpha_n - g \cdot \alpha_n \| \to 0$ for all $g \in \Gamma$.
\end{proof}

%%%%%
\subsection{First examples and properties}

\begin{examp} \label{convergence}
Recall that a group $\Gamma$ is called a \emph{convergence group} if there exists a continuous action $\Gamma \actson K$ on a compact space $K$ having at least three points such that the action induced on the (locally compact) space of distinct triples of $K$ is proper. $\Gamma$ is said to be \emph{non-elementary} if it is infinite and the action on $K$ does not preserve (set-wise) a set with at most two elements.
Non-elementary convergence groups are properly proximal. 
\end{examp}
\begin{proof}
Take a continuous action $\Gamma \actson K$ on a compact space $K$ such that the action induced on the set of distinct triples of $K$ is proper. By \cite[Proposition 1.1]{Bo96} it follows that for any diffuse measure $\eta$ on $K$, we have $\partial\Gamma = \partial_\eta\Gamma$. So we need to check that there indeed exists a diffuse measure on $K$ and that there is no $\Gamma$-invariant measure on $K$. The latter fact follows easily from the fact that the action is non-elementary. To prove the existence of a diffuse measure, first note that the exists a Hausdorff compact perfect set inside $K$. Indeed, as mentioned in \cite[Section 2]{Bo96}, if $\Gamma$ is non-elementary then its limit set is perfect. So we can apply Lemma \ref{diffuse measure}.
\end{proof}

\begin{prop}
If $\Gamma$ admits a proper two-sided array into a non-amenable representation then it is properly proximal. 
\end{prop}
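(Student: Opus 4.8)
The plan is to verify condition (iii) of Theorem \ref{carac pp} for the boundary piece $X = \partial\Gamma$, which by the last sentence of that theorem is equivalent to proper proximality of $\Gamma$. So fix a non-amenable representation $\pi : \Gamma \to \cU(H)$ together with a proper two-sided array $b : \Gamma \to H$, and recall that a representation is amenable precisely when $B(H)$ carries an $\Ad(\pi(\Gamma))$-invariant state.

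First I would identify the boundary piece attached to $b$ by Lemma \ref{array piece}, namely $X = \{\omega \in \Delta\Gamma : \lim_{g \to \omega}\|b(g)\| = +\infty\}$, and show that properness forces $X = \partial\Gamma$. Indeed, properness of $b$ means that $\{g \in \Gamma : \|b(g)\| \le R\}$ is finite for every $R$, so $\|b(g)\| \to +\infty$ as $g$ leaves the finite subsets of $\Gamma$. Any $\omega \in \partial\Gamma$ lies outside the (clopen) closure of every finite set, so every net $g \to \omega$ eventually avoids each finite set; hence $\lim_{g\to\omega}\|b(g)\| = +\infty$ and $\omega \in X$. Combined with the inclusion $X \subset \partial\Gamma$ from Lemma \ref{array piece}, this gives $X = \partial\Gamma$.

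Next I would exploit the unital completely positive map $\theta : B(H) \to C(X)$ provided by Lemma \ref{array piece}, which satisfies $\theta \circ \Ad(\pi(g)) = \lambda_g \circ \theta$ and $\rho_g \circ \theta = \theta$ for all $g \in \Gamma$. The second relation shows that $\theta$ takes values in the right-$\Gamma$-invariant subalgebra $C(X)^{\Gamma_r}$. Now suppose, toward a contradiction, that there is a left-$\Gamma$-invariant state $\varphi$ on $C(X)^{\Gamma_r}$. Then $\psi := \varphi \circ \theta$ is a state on $B(H)$, and for every $g \in \Gamma$ and $T \in B(H)$,
\[
\psi(\Ad(\pi(g))(T)) = \varphi(\theta(\Ad(\pi(g))(T))) = \varphi(\lambda_g(\theta(T))) = \varphi(\theta(T)) = \psi(T),
\]
using the first intertwining relation and the left-invariance of $\varphi$. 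Thus $\psi$ is an $\Ad(\pi(\Gamma))$-invariant state on $B(H)$, i.e.\ $\pi$ is amenable, contradicting our hypothesis. Hence no such $\varphi$ exists, condition (iii) of Theorem \ref{carac pp} holds for $X = \partial\Gamma$, and $\Gamma$ is properly proximal.

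The argument is essentially formal once Lemma \ref{array piece} and Theorem \ref{carac pp} are available: the genuine inputs are the boundary piece and the equivariant completely positive map built from the array, together with the equivalence between proper proximality and the absence of a left-invariant state on $C(\partial\Gamma)^{\Gamma_r}$. The only step needing a small verification is that \emph{properness} upgrades the array piece to the whole of $\partial\Gamma$; the remainder is the bookkeeping that matches the left/right invariance of $\theta$ with the definition of an amenable representation. I do not expect a real obstacle here.
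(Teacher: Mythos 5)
Your proof is correct and takes essentially the same route as the paper's: both invoke Lemma \ref{array piece} to get the equivariant u.c.p.\ map $\theta: B(H) \to C(\partial\Gamma)^{\Gamma_r}$ (properness of the array making the associated boundary piece all of $\partial\Gamma$), and then compose a putative left-$\Gamma$-invariant state with $\theta$ to produce an $\Ad(\pi(\Gamma))$-invariant state on $B(H)$, contradicting non-amenability of $\pi$ via condition (iii) of Theorem \ref{carac pp}. The only difference is that you write out the routine verifications (that properness forces $X=\partial\Gamma$, and the invariance computation) which the paper leaves implicit.
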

\begin{proof}
Denote by $\pi$ a unitary representation of $\Gamma$ and assume that there exists a proper two-sided array $b$ into $\pi$. Since $b$ is proper, the corresponding boundary piece, as defined in Lemma \ref{array piece}, is equal to $\partial \Gamma$. Then from Lemma \ref{array piece} there exists a unital completely positive map $\theta : B(H) \to C(\partial\Gamma)^{\Gamma_r}$ which is equivariant, in the sense that $\theta \circ \Ad(\pi(g)) = \lambda_g \circ \theta$ for all $g \in \Gamma$. Assuming that $\varphi$ is a left $\Gamma$-invariant state on $C(\partial\Gamma)^{\Gamma_r}$, we obtain that $\varphi \circ \theta$ is an $\Ad(\pi(\Gamma))$-invariant state on $B(H)$, showing that $\pi$ is amenable. So if $\pi$ is non-amenable, condition $(iii)$ of Theorem \ref{carac pp} is satisfied, showing that $\Gamma$ is properly proximal.
\end{proof}

\begin{examp}
A group with a proper cocycle into a non-amenable representation is properly proximal.
\end{examp}

\begin{examp}\label{bi-exact example}
Recall from \cite[Section 15]{BO08} that a group $\Gamma$ is said to be \emph{bi-exact} if it is exact and admits a proper two-sided array into its left regular representation, see also \cite[Proposition 2.7]{PV12b}. Thus we see that non-amenable bi-exact groups are properly proximal.
Alternatively, bi-exactness is characterized by the property that the left $\Gamma$-action on $C(\partial\Gamma)^{\Gamma_r}$ is topologically amenable. So there is no $\Gamma$-invariant state as soon as $\Gamma$ is non-amenable. 
\end{examp}

The above examples show that the following classes of groups are properly proximal:
\begin{itemize}
\item Non-elementary hyperbolic groups (being both convergence groups and bi-exact);
\item Arbitrary free products (being convergence groups);
\item The wreath product $\Z^{(\Gamma)} \rtimes \Gamma$,  for any bi-exact group $\Gamma$ (being again bi-exact by \cite{Oz04}, although it is not a convergence group). 
\end{itemize}
All these classes of groups admit some hyperbolicity properties. Even though Theorem \ref{carac  pp} shows that only one action is needed to define properly proximal groups, the flexibility of allowing several distinct actions significantly increases the class of examples for which we can prove this property. For instance we will prove in the next section that all lattices in all real semi-simple Lie groups with trivial center (e.g.\ $\SL_n(\Z)$ for all $n \geq 2$) are properly proximal groups.

\begin{prop}\label{stability}
We have the following stability properties.
\begin{enumerate}
\item A direct product of finitely many properly proximal groups is again properly proximal;
\item A co-amenable subgroup (e.g.\ a finite index subgroup) of a properly proximal group is properly proximal.
\item The class of properly proximal groups  is stable under commensurability up to finite kernels.
\end{enumerate}
\end{prop}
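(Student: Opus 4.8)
The plan is to prove the three assertions separately, working directly from Definition~\ref{patched cv} (proximal pieces) for (1) and (2), and then combining (2) with the dual characterization (iii) of Theorem~\ref{carac pp} for the finite-index parts of (3). Throughout I use that a properly proximal group is non-amenable, hence infinite.

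\textbf{Part (1).} By induction it suffices to treat $\Gamma = \Gamma_1 \times \Gamma_2$. Fix witnessing data $\Gamma_j \actson K_i^{(j)}$, $\eta_i^{(j)}$ for each factor and let $\Gamma$ act on each $K_i^{(j)}$ through the projection $\pi_j : \Gamma \to \Gamma_j$, the other factor acting trivially. A $\Gamma$-invariant measure on $K_i^{(j)}$ is then $\Gamma_j$-invariant, so none exists. The key point for the covering condition is that every $\omega \in \partial\Gamma$ satisfies $\pi_1(\omega) \in \partial\Gamma_1$ or $\pi_2(\omega) \in \partial\Gamma_2$: if both $\pi_1(\omega) = g \in \Gamma_1$ and $\pi_2(\omega) = h \in \Gamma_2$ were principal, then $\{g\} \times \Gamma_2$ and $\Gamma_1 \times \{h\}$ would both lie in the ultrafilter $\omega$, forcing $\{(g,h)\} \in \omega$ and $\omega \in \Gamma$, a contradiction. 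Now for the pulled-back action, $g \cdot \eta^{(1)} = \pi_1(g)\cdot\eta^{(1)}$, so the map $g \mapsto (gh)\cdot\eta^{(1)}_{i_0} - g\cdot\eta^{(1)}_{i_0}$ factors through $\pi_1$; its ultralimit along $\omega$ is the $\eta^{(1)}_{i_0}$-proximality expression evaluated at $\pi_1(\omega)$. Hence $\pi_1^{-1}(\partial_{\eta^{(1)}_{i_0}}\Gamma_1) \subset \partial_{\eta^{(1)}_{i_0}}\Gamma$, and the pulled-back actions cover $\partial\Gamma$.

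\textbf{Part (2).} Let $\Lambda < \Gamma$ be co-amenable with $\Gamma$ properly proximal, witnessed by $\Gamma \actson K_i$, $\eta_i$, and restrict each action to $\Lambda$. No $\Lambda$-invariant measure survives: if $\nu \in \Prob(K_i)$ were $\Lambda$-invariant, then $g\Lambda \mapsto g\cdot\nu$ is a well-defined map $\Gamma/\Lambda \to \Prob(K_i)$, and averaging against a $\Gamma$-invariant mean on $\ell^\infty(\Gamma/\Lambda)$ produces a $\Gamma$-invariant measure on $K_i$, a contradiction. For covering, let $\iota : \Delta\Lambda \to \Delta\Gamma$ be the canonical extension of the inclusion; since $\Lambda$ is infinite, $\iota$ sends $\partial\Lambda$ into $\partial\Gamma$, and for any $F : \Gamma \to C$ into a compact space one has $\lim_{g \to \iota(\omega)} F(g) = \lim_{g \to \omega} F|_\Lambda(g)$ because both sides compute the continuous extension of $F|_\Lambda$ at $\omega$. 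Applying this to $F(g) = gh\cdot\eta_i - g\cdot\eta_i$ with $h \in \Lambda$ shows $\iota(\omega) \in \partial_{\eta_i}\Gamma \Rightarrow \omega \in \partial_{\eta_i}\Lambda$ for the restricted action. As $\partial\Gamma = \bigcup_i \partial_{\eta_i}\Gamma$ and $\iota(\partial\Lambda) \subset \partial\Gamma$, the restricted actions cover $\partial\Lambda$, so $\Lambda$ is properly proximal.

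\textbf{Part (3).} Commensurability up to finite kernels is generated by passing to finite-index subgroups, finite-index overgroups, and quotients/extensions by finite normal subgroups; since finite-index subgroups are co-amenable, (2) covers the first move. For a finite-index overgroup $\Gamma$ of a properly proximal $N \leq \Gamma$, choose right-coset representatives $\Gamma = \bigsqcup_j N s_j$; the sets $\overline{N s_j}$ are clopen in $\Delta\Gamma$, and restriction to $\iota(\partial N)$ gives a left-$N$-equivariant $*$-isomorphism $r : C(\partial\Gamma)^{\Gamma_r} \xrightarrow{\sim} C(\partial N)^{N_r}$ (a right-$\Gamma$-invariant function is determined by its values on $\iota(\partial N)$, and a right-$N$-invariant function extends by $f(\omega s_j) := h(\omega)$, consistency following from the bookkeeping $s_j s = n s_{j'}$). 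A left-$\Gamma$-invariant state on $C(\partial\Gamma)^{\Gamma_r}$ is left-$N$-invariant, so transporting it through $r$ contradicts characterization (iii) for $N$; hence $\Gamma$ is properly proximal. For the finite-kernel moves write $\Lambda = \Gamma/N$ with $N$ finite normal and $\bar q : \partial\Gamma \to \partial\Lambda$ the surjective extension of the quotient map. If $\Lambda$ is properly proximal, pulling back its witnessing actions along $\Gamma \to \Lambda$ works, since invariant measures would factor through $\Lambda$ and $\partial_{\eta_i}\Gamma = \bar q^{-1}(\partial_{\eta_i}\Lambda)$ gives covering. Conversely, if $\Gamma$ is properly proximal with data $\Gamma \actson K_i$, $\eta_i$, I pass to $K_i/N$: normality makes the $\Gamma$-action descend to $\Gamma/N \actson K_i/N$; the pushforward $\bar\eta_i$ of $\eta_i$ admits no invariant measure (an invariant measure lifts, uniformly along the finite $N$-orbits, to a $\Gamma$-invariant measure on $K_i$), and equivariance of $K_i \to K_i/N$ with weak-$*$ continuity of pushforward yields $\bar q(\partial_{\eta_i}\Gamma) \subset \partial_{\bar\eta_i}(\Gamma/N)$, so these actions cover $\partial(\Gamma/N)$.

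The routine ingredients are the averaging in (2) and the compatibility checks for pulled-back proximal pieces. The main obstacle is part (3), and within it the two finite-kernel directions: first, verifying that $r$ is genuinely an isomorphism of the invariant function algebras (the coset bookkeeping must be checked carefully), and second, the implication $\Gamma$ properly proximal $\Rightarrow \Gamma/N$ properly proximal, where characterization (iii) is not directly usable — one cannot average a quotient-invariant state up to $\Gamma$ without amenability — which forces the more hands-on passage through the quotient systems $K_i/N$.
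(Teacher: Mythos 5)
Your proof is correct, and while part (1) coincides with the paper's argument, parts (2) and (3) take genuinely different routes, so a comparison is worthwhile. For (2) the paper argues dually: by Theorem~\ref{carac pp}(iii) it suffices to rule out a left-$\Lambda$-invariant state on $C(\partial\Lambda)^{\Lambda_r}$; such a state, composed with the $\Lambda\times\Lambda$-equivariant restriction map $C(\partial\Gamma)^{\Gamma_r}\to C(\partial\Lambda)^{\Lambda_r}$, gives a left-$\Lambda$-invariant state on $C(\partial\Gamma)^{\Gamma_r}$, which co-amenability then averages up to a left-$\Gamma$-invariant state, a contradiction. You instead stay at the level of Definition~\ref{patched cv}: you restrict the witnessing actions themselves and use co-amenability to average invariant \emph{measures} rather than states, together with the (correct) compatibility $\lim_{g \to \iota(\omega)} F(g) = \lim_{g \to \omega} F|_\Lambda(g)$ for the inclusion $\iota:\Delta\Lambda\to\Delta\Gamma$. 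Both exploit the same averaging mechanism; yours is more elementary in that it never invokes Theorem~\ref{carac pp} for this part. For the finite-index overgroup step of (3), the paper first replaces the subgroup by its normal core (using (2)) and then builds two u.c.p.\ maps -- a conditional expectation $E: C(\partial\Gamma)^{\Lambda_r}\to C(\partial\Gamma)^{\Gamma_r}$ averaging over the finite quotient, and a left-$\Lambda$-equivariant embedding $\phi: C(\partial\Lambda)^{\Lambda_r}\to C(\partial\Gamma)^{\Lambda_r}$ coming from the coset decomposition -- and composes a putative invariant state with $E\circ\phi$. Your clopen-decomposition argument instead produces a genuine left-$N$-equivariant $*$-isomorphism $C(\partial\Gamma)^{\Gamma_r}\cong C(\partial N)^{N_r}$, and it needs no normality reduction at all; this is a cleaner and slightly stronger statement, and your coset bookkeeping ($s_j s = n s_{j'}$, hence right multiplication permutes the clopen pieces $\iota(\partial N)s_j$) does check out. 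Finally, for finite kernels the paper is slicker than you anticipated: writing $\Gamma' = \Gamma/F$, it observes that $C(\partial\Gamma')^{\Gamma'_r} = C(\partial\Gamma)^{\Gamma_r}$ with the left $\Gamma$-action factoring through $\Gamma'$, so characterization (iii) transfers verbatim in \emph{both} directions simultaneously. Your claimed obstruction (``one cannot average a quotient-invariant state up to $\Gamma$ without amenability'') is therefore not an obstruction in the paper's approach -- no averaging is needed, since a left-$\Gamma'$-invariant state is literally a left-$\Gamma$-invariant state on the same algebra. Your dynamical workaround via the quotient systems $K_i/N$, with the group-averaged lift $\nu(f) = \bar\nu\bigl(\tfrac{1}{|N|}\sum_{n\in N} f(n\,\cdot)\bigr)$ of an invariant measure, is nonetheless valid, just longer.
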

\begin{proof}
$(1)$. Consider two properly proximal groups $\Gamma_1$ and $\Gamma_2$ and denote by $\Gamma = \Gamma_1 \times \Gamma_2$. Extend the quotient maps $\Gamma \to \Gamma_i$ to a continuous maps $\pi_i: \Delta \Gamma \to \Delta \Gamma_i$, $i = 1,2$, on the Stone-\v{C}ech compactifications. Then we have $\partial \Gamma = \pi_1^{-1}(\partial \Gamma_1) \cup \pi_2^{-1}(\partial\Gamma_2)$ (going to infinity inside $\Gamma$ amounts to having at least one coordinate going to infinity).

Of course any action $\sigma$ of $\Gamma_i$ on a compact space $K$ gives rise to an action $\sigma \circ \pi_i$ of $\Gamma$ on $K$. Note that $\sigma \circ \pi_i$ admits a $\Gamma$-invariant measure if and only if $\sigma$ has a $\Gamma_i$-invariant measure. Moreover, for any measure $\eta \in \Prob(K)$ we have $\partial_\eta\Gamma = \pi_i^{-1}(\partial_\eta \Gamma_i)$, where the left-hand expression refers to the $\Gamma$-action $\pi_i \circ \sigma$ while the right-hand side refers to the $\Gamma_i$-action $\sigma$. So the statement holds true.

$(2)$. Assume that $\Gamma$ is properly proximal and take a co-amenable subgroup $\Lambda < \Gamma$. By Theorem \ref{carac pp}, there is no left-$\Gamma$-invariant state on $C(\partial\Gamma)^{\Gamma_r}$. The restriction map $C(\partial\Gamma) \to C(\partial\Lambda)$ is left and right $\Lambda$-equivariant, so the existence of a left $\Lambda$-invariant state on $C(Y_i)^{\Lambda_r}$ implies the existence of a left $\Lambda$-invariant state on $C(X_i)^{\Gamma_r}$. By co-amenability of the inclusion $\Lambda<\Gamma$, this would further imply the existence of a $\Gamma$-invariant state on $C(X_i)^{\Gamma_r}$, which is not the case. It follows that $\Lambda$ is properly proximal.

$(3)$. It follows from $(2)$ that a finite index subgroup of a properly proximal group is properly proximal. Conversely assume that $\Lambda < \Gamma$ is a finite index inclusion of groups with $\Lambda$ properly proximal. Replacing $\Lambda$ with a finite index subgroup if necessary, we may assume that it is normal inside $\Gamma$. 

First note that the inclusion $C(\partial\Gamma)^{\Gamma_r} \subset C(\partial\Gamma)^{\Lambda_r}$ admits a left-$\Gamma$-equivariant conditional expectation $E$. Indeed, denote by $F := \Gamma/\Lambda$ and observe that the right $\Gamma$-action $\rho$ on $C(\partial\Gamma)^{\Lambda_r}$ factorizes to an action of the finite group $F$, and that $C(\partial\Gamma)^{\Gamma_r}$ is exactly the subalgebra of fixed points for this action of $F$. So the averaging map $x \mapsto \sum_{g \in F} \rho_g(x)$ is a left-$\Gamma$-equivariant conditional expectation.

Choose a set $\widetilde{F} \subset \Gamma$ of representatives of the cosets of $\Lambda$ inside $\Gamma$. The equality $\ell^\infty(\Gamma) = \bigoplus_{g \in \widetilde{F}} \rho_g\ell^\infty(\Lambda)$ implies $C(\partial\Gamma) = \bigoplus_{g \in \widetilde{F}} \rho_gC(\partial\Lambda)$, and further $C(\partial\Gamma)^{\Lambda_r} = \bigoplus_{g \in \widetilde{F}} \rho_gC(\partial\Lambda)^{\Lambda_r}$, because each $g$ normalizes $\Lambda$. The ucp map $\phi: f \in C(\partial\Lambda)^{\Lambda_r} \mapsto \bigoplus_g \rho_g(f) \in C(\partial\Gamma)^{\Lambda_r}$ is left-$\Lambda$-equivariant.

To conclude we use that characterization from Theorem \ref{carac pp}.(iii). If there is a left-$\Gamma$-invariant state $\varphi$ on $C(\partial\Gamma)^{\Gamma_r}$, then $\varphi \circ E \circ \phi$ is left-$\Lambda$-invariant on $C(\partial\Lambda)^{\Lambda_r}$ contradicting the fact that $\Lambda$ is properly proximal. We thus deduce that proper proximality is stable under commensurability.

Take two discrete groups $\Gamma$ and $\Gamma'$ with a surjective homomorphism $\pi: \Gamma \to \Gamma'$ with finite kernel. Since $\pi$ has finite kernel, we have $\partial\Gamma = \pi^{-1}(\partial \Gamma')$, where we also denote by $\pi$ the continuous extension $\pi: \Delta\Gamma \to \Delta \Gamma'$. The restriction $\pi: \partial\Gamma \to \partial\Gamma'$ then gives an embedding $C(\partial\Gamma') \subset C(\partial\Gamma)$. In fact, we have the equality $C(\partial\Gamma') = C(\partial\Gamma)^F$, where $F = \ker(\pi)$. Moreover, the left and right $\Gamma$-actions leave the subalgebra $C(\partial\Gamma')$ globally invariant, on which these actions factor to $\Gamma'$-actions. In particular, we have $C(\partial\Gamma')^{\Gamma'_r} = C(\partial\Gamma)^{\Gamma_r}$, where the left-$\Gamma$-action restricted to $C(\partial\Gamma)^{\Gamma_r}$ factors to the left-$\Gamma'$-action on $C(\partial\Gamma')^{\Gamma'_r}$.
So there is a left-$\Gamma$-invariant state if and only if there is a left-$\Gamma'$-invariant state on $C(\partial\Gamma')^{\Gamma'_r}$. So we conclude from Theorem \ref{carac pp} that $\Gamma$ is properly proximal if and only if $\Gamma'$ is.
\end{proof}

\begin{prop}\label{inner amenable}
Properly proximal groups are not inner amenable.
\end{prop}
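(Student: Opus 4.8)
The plan is to argue by contradiction using the characterization of proper proximality furnished by Theorem~\ref{carac pp}: taking $X=\partial\Gamma$, the group $\Gamma$ is properly proximal if and only if there is no left-$\Gamma$-invariant state on $C(\partial\Gamma)^{\Gamma_r}$. So I will assume that $\Gamma$ is both properly proximal and inner amenable, and manufacture such an invariant state, contradicting condition (iii) of Theorem~\ref{carac pp}. Throughout I use inner amenability in its diffuse form: there is a conjugation-invariant mean $m$ on $\ell^\infty(\Gamma)$ vanishing on $c_0(\Gamma)$. Concretely, starting from a net of unit vectors $\xi_n\in\ell^2(\Gamma)$ converging weakly to $0$ and satisfying $\|\lambda_g\rho_g\xi_n-\xi_n\|_2\to 0$ for all $g\in\Gamma$, a weak-$*$ cluster point of the states $f\mapsto\langle f\xi_n,\xi_n\rangle$ on $\ell^\infty(\Gamma)$ is such an $m$: weak nullity of $(\xi_n)$ forces $m$ to annihilate $c_0(\Gamma)$, while asymptotic conjugation invariance makes $m$ invariant under the conjugation action of $\Gamma$. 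Since $m$ kills $c_0(\Gamma)$, it descends to a state, still denoted $m$, on $C(\partial\Gamma)=\ell^\infty(\Gamma)/c_0(\Gamma)$, invariant under the boundary extension of the conjugation action.

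The crux is the observation that the conjugation action is implemented by $\beta_g=\lambda_g\rho_g$, where $\lambda$ and $\rho$ denote the (commuting) left and right $\Gamma$-actions on $C(\partial\Gamma)$; indeed $\mathrm{Ad}(\lambda_g\rho_g)$ sends the multiplication operator by $f$ to multiplication by $h\mapsto f(g^{-1}hg)$. Consequently, on the fixed-point subalgebra $C(\partial\Gamma)^{\Gamma_r}$ of the right action we have $\beta_g f=\lambda_g(\rho_g f)=\lambda_g f$ for every $g\in\Gamma$. Because $\lambda$ and $\rho$ commute, $C(\partial\Gamma)^{\Gamma_r}$ is globally $\lambda$-invariant, so $m|_{C(\partial\Gamma)^{\Gamma_r}}$ is a well-defined state; and invariance of $m$ under $\beta$ gives $m(\lambda_g f)=m(\beta_g f)=m(f)$ for all $f\in C(\partial\Gamma)^{\Gamma_r}$ and $g\in\Gamma$. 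Thus $m|_{C(\partial\Gamma)^{\Gamma_r}}$ is a left-$\Gamma$-invariant state, contradicting Theorem~\ref{carac pp}(iii), and hence $\Gamma$ cannot be inner amenable.

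I expect the only genuinely delicate point to be pinning down the correct formulation of inner amenability: the restriction argument requires the conjugation-invariant mean to live on the boundary, i.e.\ to vanish on $c_0(\Gamma)$. This is essential rather than cosmetic, since a Dirac mass at a nontrivial \emph{central} element, such as $-\mathrm{Id}\in\SL_d(\Z)$, is conjugation invariant yet supported inside $c_0(\Gamma)$; excluding such trivial means is exactly what keeps the statement consistent with $\SL_d(\Z)$ being properly proximal. Once one commits to this at-infinity/diffuse version of inner amenability, the remaining ingredients, namely that the conjugation action descends to $C(\partial\Gamma)$ and coincides with the left action $\lambda$ on the right-invariant subalgebra, are routine verifications.
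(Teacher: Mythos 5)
Your proof is correct and follows essentially the same route as the paper's: pass the conjugation-invariant mean (vanishing on $c_0(\Gamma)$) to $C(\partial\Gamma)$, observe that the conjugation action coincides with the left action on $C(\partial\Gamma)^{\Gamma_r}$, and contradict condition (iii) of Theorem~\ref{carac pp}. Your explicit handling of the ``diffuse'' formulation of inner amenability (ruling out means concentrated on finite conjugacy classes such as $\{\pm\mathrm{Id}\}\subset\SL_d(\Z)$) is exactly the convention the paper uses implicitly, so there is no gap.
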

\begin{proof}
Assume that $\Gamma$ is an inner amenable amenable group. Then there exists a state $m: \ell^\infty(\Gamma) \to \C$ which is invariant under the conjugation action of $\Gamma$ and which vanishes on $c_0(\Gamma)$. So $m$ factorizes to a conjugation-invariant state on $C(\partial\Gamma) = \ell^\infty(\Gamma)/c_0(\Gamma)$. Its restriction to $C(\partial\Gamma)^{\Gamma_r}$ is then left-$\Gamma$-invariant. Applying Theorem \ref{carac pp}, we conclude that $\Gamma$ is not properly proximal.
\end{proof}

Proposition \ref{inner amenable} implies that the direct product of an infinite amenable group with an arbitrary group is never properly proximal. It also follows that an infinite direct sum of non-trivial groups is never properly proximal. Hence, proper proximality is not invariant under inductive limits.

%%%%%
\subsection{Linear properly proximal groups}
\label{linear}

In this section we study properly proximal linear groups.
Even if we are only interested in results for discrete countable groups, we will need to employ general Lie groups and algebraic groups. So before turning to the concrete examples of linear groups let us first mention a few facts about boundary pieces in locally compact groups.

Given a locally compact group $G$, we define its Stone-\v{C}ech compactification $\Delta G$ to be the Gelfand spectrum of the algebra $C_b(G)$ of bounded continuous functions on $G$ and its boundary $\partial G$ is then the spectrum of the quotient $C_b(G)/C_0(G)$, where $C_0(G)$ is the ideal of continuous functions going to $0$ at infinity.

Given a continuous action $G \actson K$ on a compact space and a measure $\eta \in \Prob(K)$, we may define $\partial_\eta G$ in a similar way as for the discrete case. However, in order to avoid the use of ultrafilters in this topological setting, we proceed as follows instead: since the action $G \actson K$ is continuous, the function $f_\eta: g \mapsto \int_K f(gx)d\eta$ is in $C_b(G)$ for all $f \in C(K)$. Then define
\[\partial_\eta G := \{ \omega \in \Delta G \mid \omega(f_\eta) = (\omega h)(f_\eta), \text{ for all } h \in G, f \in C(K)\}.\]

\begin{lem}\label{LC boundary}
Take a subgroup $\Gamma < G$ and denote by $\pi: \Delta \Gamma \to \Delta G$ the continuous map extending the embedding. Assume that $G$ acts continuously on a compact space $K$ and consider also the restricted $\Gamma$ action on $K$. Then for any $\eta \in \Prob(K)$, we have $\partial_\eta \Gamma \supset \pi^{-1}(\partial_\eta G)$. In the case where $\Gamma$ is discrete inside $G$, then $\pi$ is an embedding and the formula can be read as $\partial_\eta \Gamma \supset \partial_\eta G \cap \Delta \Gamma$.
\end{lem}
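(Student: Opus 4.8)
The statement is essentially a naturality check between the two definitions of the proximal boundary, so the plan is to reduce everything to the restriction map $C_b(G) \to \ell^\infty(\Gamma)$, $F \mapsto F|_\Gamma$, whose dual is $\pi$, and to carry both boundary conditions over to the Gelfand-spectrum language.

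First I would record the compatibility of the two transforms. For $f \in C(K)$ the function $f_\eta \in C_b(G)$ satisfies $f_\eta(g) = \int_K f\, d(g\cdot \eta)$, which for $g \in \Gamma$ is exactly $\theta_0(f)(g)$, the transform used to define $\partial_\eta\Gamma$ in the discrete setting (see Lemma~\ref{lem: theta}). Hence $(f_\eta)|_\Gamma = \theta_0(f)$, and by definition of $\pi$,
\[\pi(\omega)(f_\eta) = \omega\big((f_\eta)|_\Gamma\big) = \omega(\theta_0(f)) \quad \text{for all } \omega \in \Delta\Gamma.\]
Next I would check that $\pi$ is right $\Gamma$-equivariant: for $h \in \Gamma$ and $F \in C_b(G)$, writing $R_h$ for right translation by $h$, the functions $(R_hF)|_\Gamma$ and $R_h(F|_\Gamma)$ both send $g \in \Gamma$ to $F(gh)$ (using $h \in \Gamma$), so dualizing gives $\pi(\omega h) = \pi(\omega) h$.

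With these two observations the inclusion is immediate. Unwinding the weak-$*$ definition, a point $\omega \in \Delta\Gamma$ lies in $\partial_\eta\Gamma$ precisely when $\omega(\theta_0(f)) = (\omega h)(\theta_0(f))$ for all $h \in \Gamma$ and $f \in C(K)$, since testing $\lim_{g\to\omega}((gh)\cdot\eta - g\cdot\eta)=0$ against $f$ is the equality $\lim_{g\to\omega}(\theta_0(f)(gh) - \theta_0(f)(g)) = 0$. By the displayed identity and equivariance, the left- and right-hand sides of this condition equal $\pi(\omega)(f_\eta)$ and $(\pi(\omega)h)(f_\eta)$ respectively. Thus $\omega \in \partial_\eta\Gamma$ iff $\pi(\omega)(f_\eta) = (\pi(\omega)h)(f_\eta)$ for all $h \in \Gamma$ and $f \in C(K)$; and if $\pi(\omega) \in \partial_\eta G$ this holds even for all $h \in G$, hence in particular $\omega \in \partial_\eta\Gamma$. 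This yields $\partial_\eta\Gamma \supset \pi^{-1}(\partial_\eta G)$.

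For the refinement when $\Gamma$ is discrete in $G$, I would argue that $\pi$ is then a topological embedding, which amounts to surjectivity of the restriction map $C_b(G) \to \ell^\infty(\Gamma)$. Here I would use the standard fact that a discrete subgroup of a locally compact Hausdorff group is closed, so $\Gamma$ has no accumulation points in $G$; choosing a locally finite family of disjoint open sets $U_\gamma \ni \gamma$ with $U_\gamma \cap \Gamma = \{\gamma\}$ and Urysohn bumps $\phi_\gamma$ supported in $U_\gamma$, one extends any $a \in \ell^\infty(\Gamma)$ to $\sum_\gamma a(\gamma)\phi_\gamma \in C_b(G)$. Surjectivity makes $\pi$ injective and a homeomorphism onto its image, so that $\pi^{-1}(\partial_\eta G)$ reads as $\partial_\eta G \cap \Delta\Gamma$ under this identification. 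I expect this extension argument, rather than the formal computation above, to be the only genuinely non-formal point, the subtlety being to ensure the neighborhoods are locally finite so that the extension is continuous and bounded.
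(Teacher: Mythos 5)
Your proposal is correct and takes essentially the same approach as the paper. The paper likewise reduces the embedding claim to surjectivity of the restriction map $C_b(G)\to\ell^\infty(\Gamma)$ and extends functions via bump functions---using translates $\gamma U$ of a single neighborhood of $e$ and a single bump $F$, which produces your disjoint, locally finite family directly from the group structure---while the inclusion $\partial_\eta\Gamma\supset\pi^{-1}(\partial_\eta G)$ is dismissed there as ``easy,'' being exactly the formal duality computation you spell out.
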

\begin{proof}
Let us prove that if $\Gamma$ is discrete inside $G$ then $\pi$ is an embedding. Note that it is enough to prove that the restriction map $\pi_*: C_b(G) \to \ell^\infty(\Gamma)$ is surjective.  Since $\Gamma$ is discrete we may find a neighborhood $U$ of the trivial element $e$ such that for all distinct elements $\gamma, \gamma' \in \Gamma$, we have $\gamma U \cap \gamma'U = \emptyset$. Take an arbitrary function $F \in C_b(G)$ such that $F(e) = 1$ and $F$ is supported on $U$.
Then if $f \in \ell^\infty(\Gamma)$ one verifies that the function $\tilde f \in C_b(G)$ defined below satisfies $\pi_*(\tilde f) = f$:
\[\tilde f(g) = \begin{cases*}  f(\gamma)F(\gamma^{-1}g) & if $g \in \gamma U$, $\gamma \in \Gamma$, \\  0 & otherwise.\end{cases*}\]  
This proves that $\pi_*$ is surjective. All the other assertions of the lemma are easy.
\end{proof}

Let us now prove the main lemma about proximality in linear groups over local fields. The rest of this section uses heavily the language of algebraic groups. We refer to Section \ref{section SL} below for a direct argument in the special case of $\SL_d(\R)$.

\begin{lem}\label{simple case}
Consider an almost $k$-simple, connected, simply connected algebraic group $\G$ over a local field $k$. Then there are finitely many proper parabolic $k$-subgroups $\bP_i < \G$ and measures $\eta_i \in \Prob(\G(k)/\bP_i(k))$ such that $\partial G = \bigcup_i \partial_{\eta_i} G$.
\end{lem}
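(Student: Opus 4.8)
The plan is to show that $G:=\G(k)$ is proximal in every direction towards infinity when acting on its maximal partial flag varieties. We may assume $\G$ is $k$-isotropic (otherwise $G$ is compact, $\partial G=\emptyset$, and the statement is vacuous). I would fix a maximal $k$-split torus $S<\G$ of dimension equal to the $k$-rank $r$, a minimal parabolic $k$-subgroup containing it, and let $\alpha_1,\dots,\alpha_r$ be the simple relative roots. For each $i$ let $\bP_i<\G$ be the maximal proper parabolic $k$-subgroup corresponding to $\{\alpha_1,\dots,\alpha_r\}\setminus\{\alpha_i\}$, write $K_i:=\G(k)/\bP_i(k)$ with base point $o_i:=e\bP_i$, and take $\eta_i\in\Prob(K_i)$ to be the $\mathrm K$-invariant probability measure, where $\mathrm K<G$ is a good maximal compact subgroup (so $G=\mathrm K\bP_i(k)$ by Iwasawa). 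This $\eta_i$ has full support and lies in the unique $G$-quasi-invariant (Lebesgue) measure class on $K_i$; since $G$ admits no invariant probability measure on the nontrivial flag variety $K_i$, each $\eta_i$ is non-invariant, whence $\partial_{\eta_i}G\subset\partial G$ automatically. It therefore remains to prove the reverse inclusion $\partial G\subset\bigcup_i\partial_{\eta_i}G$.

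Using the Cartan decomposition $G=\mathrm KA^+\mathrm K$ (classical in the archimedean case, and from Bruhat--Tits theory in the non-archimedean one), let $\kappa:G\to A^+$ be the continuous Cartan projection and set $b_i(g):=\log|\alpha_i(\kappa(g))|_k\ge 0$. Since the simple relative roots form a basis of $X^*(S)\otimes\R$, a net $g\to\infty$ in $G$ is equivalent to $\max_j b_j(g)\to\infty$. I would then put $S_i:=\{g\in G:\ b_i(g)\ge\tfrac1r\max_j b_j(g)\}$: these are closed, $G=\bigcup_i S_i$, and on $S_i$ one has $\max_j b_j(g)\le r\,b_i(g)$, so $\kappa(g)$ stays bounded if and only if $b_i(g)$ does; hence a function decaying as $b_i(g)\to\infty$ decays at infinity along $S_i$. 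From $G=\bigcup_i S_i$ we get $\partial G\subset\bigcup_i\overline{S_i}$ (closures in $\Delta G$). By the definition of $\partial_{\eta_i}G$, a point $\omega$ lies in $\partial_{\eta_i}G$ exactly when $\omega(\phi^i_{f,h})=0$ for all $f\in C(K_i)$, $h\in G$, where $\phi^i_{f,h}(g):=\int f\,d(g\eta_i)-\int f\,d(gh\eta_i)\in C_b(G)$. Thus it suffices to show that each $\phi^i_{f,h}$ tends to $0$ at infinity along $S_i$: its continuous extension to $\Delta G$ then vanishes on $\overline{S_i}\cap\partial G$, so every $\omega\in\overline{S_i}\cap\partial G$ lies in $\partial_{\eta_i}G$, giving $\partial G\subset\bigcup_i\partial_{\eta_i}G$.

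The heart of the matter is this decay, which I would deduce from the contraction dynamics of $A^+$ on $K_i$. Embed $K_i$ $G$-equivariantly in $\mathbb{P}(V_i)$ via a $k$-rational representation $(\rho_i,V_i)$ whose highest weight is a positive multiple of the fundamental weight dual to $\alpha_i$ (Borel--Tits), so that $o_i$ is the highest-weight line. The key algebraic fact is that every weight of $V_i$ other than the highest has $\alpha_i$-coefficient at least one in its expansion $(\text{highest weight})-\sum_j n_j\alpha_j$; indeed the highest-weight vector is annihilated by the lowering operators attached to all $\alpha_j$ with $j\neq i$. Consequently, writing $g=k\,a\,k'\in\mathrm KA^+\mathrm K$, the operator $\rho_i(a)$ contracts the complement of the proper subvariety $Z_i:=\{\text{the highest-weight coordinate vanishes}\}$ towards $o_i$ at a rate governed by $e^{-b_i(g)}$, uniformly on compact subsets of $K_i\setminus Z_i$, and this uses only $b_i(g)\to\infty$ (no control on the other $b_j$). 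Since $\eta_i$ is Lebesgue class we have $\eta_i(Z_i)=0$, and as $k'$ ranges over the compact group $\mathrm K$ the families $\{k'\eta_i\}$ and $\{k'h\eta_i\}$ are weak-$*$ compact families of Lebesgue-class measures, hence uniformly tight away from $Z_i$ by a standard compactness argument. Combining this tightness, the uniform contraction, and the equicontinuity of $\{f\circ k:k\in\mathrm K\}$, both $\int f\,d(g\eta_i)=\int (f\circ k)\,d\big(\rho_i(a)_*(k'\eta_i)\big)$ and $\int f\,d(gh\eta_i)=\int (f\circ k)\,d\big(\rho_i(a)_*(k'h\eta_i)\big)$ converge, as $b_i(g)\to\infty$, to the common value $f(k\cdot o_i)$; their difference $\phi^i_{f,h}(g)$ therefore tends to $0$ along $S_i$, as required.

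The main obstacle is precisely this last \emph{uniform} contraction estimate: the representation-theoretic contraction itself is clean, but one must upgrade pointwise contraction to a statement uniform over the compact family of $\mathrm K$-translates $\{k'\eta_i\}$ and $\{k'h\eta_i\}$, which rests on a tightness argument for these continuously and compactly parametrized families of Lebesgue-class measures near the repelling subvariety $Z_i$. A secondary, more bookkeeping issue is to set up the relative root system, the Cartan and Iwasawa decompositions, and the $k$-rational fundamental representations uniformly over all local fields $k$, for which I would invoke Borel--Tits and Bruhat--Tits theory; in the special case $G=\SL_d(\R)$ all of this reduces to the elementary singular-value analysis carried out in Section~\ref{section SL}.
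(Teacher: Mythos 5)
Your proposal is correct, and its skeleton coincides with the paper's: reduce to the isotropic case, take the maximal proper parabolic $k$-subgroups $\bP_i$ attached to the simple relative roots, put Lebesgue-class measures $\eta_i$ on $\G(k)/\bP_i(k)$, run the Cartan decomposition, and observe that proximality towards $G/\bP_i(k)$ needs only $|\alpha_i(\kappa(g))|_k \to \infty$, the other simple roots being merely bounded below by $1$ on the positive chamber; then patch over $i$. Where you genuinely diverge is the implementation of the key contraction. The paper works intrinsically in the group: it parametrizes a co-null open subset of $G/P_i$ by the unipotent radical $V_i^-$ of the opposite parabolic (an open map, \cite[Lemma IV.2.2]{Ma91}), pushes forward a Haar-class measure from $V_i^-$, and proves (its Claim 1) that conjugation by $s \in S^+$ with $|\alpha_i(s)|_k$ large contracts compact subsets of $V_i^-$ into small neighborhoods of the identity, via the adjoint action on the Lie algebra of $V_i^-$ and the exponential map. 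You work extrinsically, embedding $G/P_i$ into projective space by a Tits strongly rational representation and using highest-restricted-weight estimates; note that the root-theoretic input is identical in both arguments (every weight below the highest involves $\alpha_i$ with coefficient at least $1$) — the paper applies it to the adjoint representation, you to a fundamental one. Your route gives an explicit contraction rate $e^{-b_i}$ in the style of Furstenberg and Benoist--Quint, but it must invoke Borel--Tits theory to know the highest restricted-weight space is a $k$-line with stabilizer exactly $\bP_i(k)$, whereas the paper stays entirely within the structure theory of parabolics it already quotes from Margulis. Two smaller differences: your covering of $G$ by the sectors $S_i$ and the decay-at-infinity formulation replace the paper's net/subnet argument converging to a fixed $\omega \in \Delta G$ (these are equivalent bookkeeping devices); and your uniform-tightness step for $\{k'h\eta_i\}_{k'}$ away from $Z_i$ is exactly the paper's ``routine argument'' that $\eta_i(k_n^{-1}C) > 1 - \eps$ for large $n$ (in fact, since your $\eta_i$ is invariant under the maximal compact subgroup, the family $\{k'\eta_i\}_{k'}$ is a single measure, simplifying one of your two terms). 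Finally, you explicitly check non-invariance of $\eta_i$ to get $\partial_{\eta_i}G \subset \partial G$, a point the paper leaves implicit and only addresses later via Furstenberg's lemma; that is a small but genuine improvement in completeness.
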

\begin{proof}
We use Cartan decomposition of simple groups over local fields as presented in \cite[Theorem I.(2.2.1)]{Ma91}

Denote by $d$ the $k$-rank of $\G$. If $d = 0$, then $\G(k)$ is compact and there is nothing to prove. Assume that $d \geq 1$ and denote by $\bS \subset \G$ a maximal $k$-split torus, and denote by $\Phi$ the corresponding set of roots and by $\Phi^+$ the set of positive roots with respect to some order. Denote by $\Phi_0 = \{\alpha_1,\dots,\alpha_d\} \subset \Phi^+$ the set of simple roots.

Set $\hat{k} := \{x \in \R \mid x \geq 1\}$ if $k \simeq \R$ or $\C$, and $\hat{k} = \{ \beta^{-n} \mid n \in \N \}$ if $k$ is non-archimedean (here $\beta$ is a uniformizer of $k$). Denote by $S^+ := \{ s \in \bS(k) \mid \chi(s) \in \hat{k} \text{ for all } \chi \in \Phi^+\}$. Then according to \cite[Theorem I.(2.2.1)]{Ma91}, there exists a compact subset $M \subset \G(k)$ such that $\G(k) = M S^+ M$.

For each $i$, denote by $\theta_i  := \Phi_0 \setminus \{\alpha_i\}$, and by $\bP_i = \bP_{\theta_i}$ the corresponding parabolic subgroup, see \cite[Section I.1.2]{Ma91} and by by $\V^-_i$ the unipotent radical of the opposite parabolic subgroup of $\bP_i$. For simplicity we set $G := \G(k)$, $S = \bS(k)$, $P_i := \bP_i(k)$, $V^-_i := \V^-_i(k)$.

The projection map $p_i: G \to G/P_i$ is continuous, and $G$-equivariant. We consider its restriction to $V^-_i$. 
Since $S \subset P_i$, we have
\begin{equation}\label{conjugacy} p_i(svs^{-1}) = p_i(sv) = s \cdot p_i(v), \text{ for all } s \in S, v \in V^-_i.\end{equation}
It follows from \cite[Lemma IV.2.2]{Ma91} that the restriction of $p_i$ to $V_i^-$ is an open map from $V_i^-$ into $G/P_i$. Moreover, if we take any probability measure $\nu_i \in \Prob(V^-_i)$ equivalent to the Haar measure, its push-forward $\eta_i \in \Prob(G_i/P_i)$ under $p_i$ is a quasi-invariant measure for the $G$-action.

The lemma will follow from the Cartan decomposition and the next three claims.
The first claim is in the spirit of \cite[Lemma II.3.1]{Ma91}.

{\bf Claim 1.} For any compact set $C \subset V_i^-$ and any neighbourhood $O$ of the neutral element $1_{V_i^-}$, there exists a constant $A > 0$ such that every $s \in S^+$ for which $\vert \alpha_i(s) \vert_{k} \geq A$, we have $sCs^{-1} \subset O$.

Since we may apply the exponential map, see \cite[Proposition I.1.3.3]{Ma91}, it suffices to prove the analogous statement for the Lie algebra $\fL$ consisting of the $k$-points of the Lie algebra of $\V_i^-$. See also \cite[Proposition I.2.1.1]{Ma91}. 

Note that $\fL$ is spanned by the eigenvectors in the adjoint representation corresponding to the negative roots in $\Phi$ which admit a non-trivial coefficient at $\alpha_i$. In other words, it is spanned by (finitely many) vectors $v_\alpha$, such that $\Ad(s)(v_\alpha) = \alpha(s)v_\alpha$ for all $s \in \bS(k)$, where $\alpha$ ranges over roots $\alpha = \alpha_1^{n_1} \cdots \alpha_d^{n_d}$ for some integers $n_1,\dots, n_d \leq 0$ and $n_i < 0$. 

Take a norm $\Vert \cdot \Vert_{\fL}$ on the vector space $\fL$ compatible a the absolute value $\vert \cdot \vert_k$ on $k$. By the previous paragraph, we may find a constant $A_1 > 0$ such that 
\[\Vert \Ad(s)(v) \Vert_{\fL} \leq A_1 \vert \alpha_i(s)^{-1} \vert_k \Vert v \Vert_{\fL}, \text{ for all } v \in \fL, s \in S^+.\]
Take a compact subset $C \subset \fL$ and a neighbourhood $O \subset \cL$ of $0 \in \fL$. We may assume that $C = \{v \in \fL \mid \Vert v \Vert_{\fL} \leq A_2\}$ and $O = \{v \in \fL \mid \Vert v \Vert_{\fL} \leq a\}$ for some constants $A_2 > 0$, $a > 0$. Setting $A := A_1A_2/a$, we see that $\Ad(s)(K) \subset O$ for all $s \in S^+$ satisfying $\vert \alpha_i(s)\vert_k > A$. This concludes the proof of Claim 1.

{\bf Claim 2.} For any net $s_n \in S^+$ such that $\vert \alpha_i(s_n)\vert_k \to \infty$, and all convergent nets $(h_n)_n$ and $(k_n)_n$ in $G$, with respective limits $h$ and $k$, we have the following weak-* convergence 
\[\lim_n (h_ns_nk_n)\cdot \eta_i = \delta_{hP_i}.\]

To prove this claim, it suffices to show that for any open neighborhood $U \subset G/P_i$ of $hP_i$, we have $\lim_n \eta_i((h_ns_nk_n)^{-1}U) = 1$. Since the $G$-action on $G/P_i$ is continuous, for any such $U$, we may find a neighborhood $U_0$ of $P_i \in G/P_i$ such that for all $n$ large enough, we have $h_nU_0 \subset U$. So in fact, it is sufficient to prove that for every neighborhood $U_0 \subset G/P_i$ of $P_i$ and for every $\eps > 0$ there exists $n_0$ large enough so that $\eta_i(k_n^{-1}s_n^{-1}U_0) \geq 1 - \eps$ for all $n \geq n_0$.

Fix such $U_0$ and $\eps$, and take a relatively compact, open set $\Omega \subset V_i^-$ with such that $\nu_i(k^{-1}p_i(\Omega)) > 1-\eps$. Applying Claim 1, we find $n_1$ large enough so that for all $n \geq n_1$, we have $s_n \cdot p_i(\Omega) \subset U_0$. The set $C := p_i(\Omega)$ is an open set in $G/P_i$ such that $\eta_i(k^{-1}C) > 1 - \eps$ and $C \subset s_n^{-1} U_0$ for all $n \geq n_1$.

Since $\eta_i(k^{-1}C) > 1 - \eps$, $C$ is open and the measure $\eta_i$ is regular, a routine argument shows that there exists $n_2$ such that $\eta_i(k_n^{-1}C) > 1 - \eps$ for all $n \geq n_2$. Thus for all $n \geq \max(n_1,n_2)$ we have 
\[\eta_i(k_n^{-1}s_n^{-1}U_0) \geq \eta_i(k_n^{-1}C) \geq 1 - \eps,\]
as wanted.

{\bf Claim 3.} For any unbounded net $s_n \in S^+$, there exists $i$ such that $\vert \alpha_i(s_n) \vert_k \to \infty$.

This follows from the fact that the map $\alpha_1 \times \cdots \times \alpha_d: \bS \to (\GL_1)^d$ is a $k$-isomorphism, which restricts to a homeomorphism $\bS(k)$ onto $(k^*)^d$ and maps $S^+$ into the subset $\{(x_1,\dots,x_d) \mid \vert x_i \vert \geq 1 \text{ for all }i\}$.

To conclude the proof, let $\omega \in \Delta G$ and take a net $(g_n)_n$ in $G$ which converges to $\omega$. Then for each $n$ we may write the Cartan decomposition of $g_n$: $g_n = h_ns_nk_n$, with $h_n,k_n \in M$ and $s_n \in S^+$. Taking a subnet if necessary, we may assume that $h_n$ and $k_n$ converge to elements $h$ and $k$ respectively. By Claim 3, we may find some $i$ such that $\lim_n \vert \alpha_i(s_n)\vert_k = +\infty$. Now we may apply Claim 2, and we conclude
\[(\omega t)(f_{\eta_i}) = \lim_n \int f d(g_nt) \cdot \eta_i = f(hP_i) = \omega(f_{\eta_i}) \text{  for all } f \in C(G/P_i), t \in G.\]
This shows that $\omega \in \partial_{\eta_i}G$.
\end{proof}

\begin{prop}\label{discrete}
Consider finitely many local fields $k_i$, and semi-simple connected $k_i$-groups $\G_i$. Set $G_i := \G_i(k_i)$ for each $i$ and take a discrete subgroup $\Gamma$ in $G := \Pi_i G_i$ whose projection on each $G_i$ is Zariski dense. Then $\Gamma$ is properly proximal. In particular, lattices in semi-simple algebraic groups over local fields are properly proximal.
\end{prop}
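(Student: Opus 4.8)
The plan is to derive the statement from Lemma~\ref{simple case}, after reducing each semisimple $\G_i$ to a product of its almost-simple, simply connected factors (for which that lemma applies), and then to transport the resulting covering of the boundary from the ambient group down to $\Gamma$ via Lemma~\ref{LC boundary}. First I would perform the algebraic reduction. Writing each $\G_i$ as an almost direct product of its $k_i$-almost-simple normal subgroups $\G_{i,j}$ and letting $\widetilde{\G}_{i,j}\to\G_{i,j}$ be the simply connected central covers, I set $\widetilde G:=\prod_{i,j}\widetilde{\G}_{i,j}(k_i)$. Composing these isogenies with the multiplication maps gives a map on points $\phi\colon\widetilde G\to G=\prod_i\G_i(k_i)$ which, over local fields, has finite kernel and open image of finite index (its cokernel injects into a finite product of Galois cohomology groups $H^1(k_i,\,\cdot\,)$). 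After replacing $\Gamma$ by the finite-index subgroup $\Gamma\cap\phi(\widetilde G)$ — harmless by Proposition~\ref{stability}(2),(3), and preserving Zariski density of each projection since the $\G_i$ are connected — I may assume $\Gamma\subset\phi(\widetilde G)$, so that $\widetilde\Gamma:=\phi^{-1}(\Gamma)$ is discrete in $\widetilde G$, surjects onto $\Gamma$ with finite kernel, and projects Zariski densely onto each factor (isogenies and projections to factors are dominant). By Proposition~\ref{stability}(3) it then suffices to prove $\widetilde\Gamma$ is properly proximal. Reindexing the pairs $(i,j)$ by $\ell$, I am thus reduced to $\widetilde G=\prod_\ell H_\ell$ with each $H_\ell=\widetilde{\G}_\ell(k_\ell)$ almost-simple and simply connected and each projection of $\widetilde\Gamma$ Zariski dense; anisotropic ($k_\ell$-rank $0$) factors are compact and may be discarded, as they never contribute points at infinity.

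Next, for each $\ell$ Lemma~\ref{simple case} produces finitely many proper parabolics $\bP_{\ell,i}<\widetilde{\G}_\ell$ and measures $\eta_{\ell,i}\in\Prob(H_\ell/P_{\ell,i})$ with $\partial H_\ell=\bigcup_i\partial_{\eta_{\ell,i}}H_\ell$. Viewing each $H_\ell/P_{\ell,i}$ as a $\widetilde G$-space through the projection $q_\ell\colon\widetilde G\to H_\ell$, the associated function $f_{\eta_{\ell,i}}$ on $\widetilde G$ is the pullback under $q_\ell$ of the corresponding function on $H_\ell$; using the equivariant extension $\Delta q_\ell\colon\Delta\widetilde G\to\Delta H_\ell$ one checks directly from the definition that $\partial_{\eta_{\ell,i}}\widetilde G=(\Delta q_\ell)^{-1}(\partial_{\eta_{\ell,i}}H_\ell)$. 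Since a point of $\Delta\widetilde G$ lies at infinity precisely when at least one coordinate does (a finite product), taking unions yields $\bigcup_{\ell,i}\partial_{\eta_{\ell,i}}\widetilde G=\bigcup_\ell(\Delta q_\ell)^{-1}(\partial H_\ell)\supset\partial\widetilde G$. Pulling back along the embedding $\pi\colon\Delta\widetilde\Gamma\hookrightarrow\Delta\widetilde G$ afforded by discreteness (Lemma~\ref{LC boundary}), and using $\pi^{-1}(\partial\widetilde G)\supset\partial\widetilde\Gamma$, I obtain $\bigcup_{\ell,i}\partial_{\eta_{\ell,i}}\widetilde\Gamma\supset\pi^{-1}\big(\bigcup_{\ell,i}\partial_{\eta_{\ell,i}}\widetilde G\big)\supset\partial\widetilde\Gamma$, so the proximal pieces cover the whole boundary.

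It remains to verify that there is no $\widetilde\Gamma$-invariant probability measure on any $K_{\ell,i}:=H_\ell/P_{\ell,i}$; granting this, Definition~\ref{patched cv} applies to the finite family $(K_{\ell,i},\eta_{\ell,i})$ and shows $\widetilde\Gamma$ properly proximal, whence $\Gamma$ is too. Since $\widetilde\Gamma$ acts through the Zariski-dense image $q_\ell(\widetilde\Gamma)<H_\ell$, this is exactly the statement that a Zariski-dense subgroup preserves no probability measure on a proper flag variety, for which I would invoke Furstenberg's lemma: embedding $H_\ell/P_{\ell,i}$ equivariantly into a projective space $\bP(V)$, the stabilizer in $H_\ell$ of any $\mu\in\Prob(\bP(V))$ is Zariski closed, hence all of $H_\ell$ once it contains a Zariski-dense subgroup; but the unimodular group $H_\ell$ admits no invariant probability measure on $H_\ell/P_{\ell,i}$ for a proper (non-unimodular) parabolic $P_{\ell,i}$, a contradiction. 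Finally, the ``in particular'' follows since, by the Borel density theorem, a lattice in $\prod_i\G_i(k_i)$ is Zariski dense and therefore projects Zariski densely onto each factor.

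I expect the main obstacle to be the first paragraph: controlling the passage to the simply connected almost-simple factors at the level of $k_i$-points (finiteness of kernel and index, discreteness of $\widetilde\Gamma$, and persistence of Zariski density through both the isogeny and the finite-index replacement) while keeping proper proximality invariant through Proposition~\ref{stability}, together with the bookkeeping showing $\partial_\eta$ of the product is the $\Delta q_\ell$-preimage of $\partial_\eta$ of a factor. By contrast, the no-invariant-measure step rests on Furstenberg's lemma, which I treat as a known external input rather than reproving.
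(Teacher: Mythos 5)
There is a genuine gap at the no-invariant-measure step, and it is not a technicality: your statement of Furstenberg's lemma is wrong. The stabilizer in $H_\ell$ of a probability measure on a projective space (or flag variety) is \emph{not} Zariski closed in general; the correct statement (e.g.\ \cite[Corollary 3.2.19]{Zi84}, which is what the paper invokes) is that such a stabilizer is a compact extension of an algebraic group, so one can only conclude non-existence of an invariant measure when the acting subgroup is Zariski dense \emph{and} has non-compact closure in the Hausdorff topology. Zariski density alone does not suffice, and your reduction never arranges unboundedness of the projections. Concretely, take $\Gamma = \SL_2(\Z)$ embedded diagonally in $\SL_2(\R)\times\SL_2(\Q_p)$ via $\SL_2(\Z)\subset\SL_2(\Z_p)$: this $\Gamma$ is discrete in the product and projects Zariski densely onto both factors, yet its projection to $\SL_2(\Q_p)$ lies in the compact group $\SL_2(\Z_p)$ and therefore \emph{does} preserve a probability measure on $\bP^1(\Q_p)$ (a compact open subgroup of $\SL_2(\Q_p)$ is Zariski dense but far from all of $\SL_2(\Q_p)$, which is exactly why ``the stabilizer is Zariski closed, hence everything'' fails). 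The paper's proof contains a step you omitted (its Step 3): one first discards the factors on which the projection $\Gamma_i$ has compact closure, by projecting onto the remaining factors --- this keeps $\Gamma$ discrete up to a finite kernel because the discarded coordinates range in a compact set, so Proposition \ref{stability} applies --- and only then invokes Furstenberg/Zimmer, now legitimately, since each surviving projection is Zariski dense and unbounded. (Alternatively one can check that for a bounded factor the pulled-back pieces $(\Delta q_\ell)^{-1}(\partial H_\ell)\cap\Delta\Gamma$ are empty, so those flag varieties can simply be dropped from your family without breaking the covering of $\partial\Gamma$; but some such step is indispensable.)

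A secondary, more technical problem is in your first reduction: over a local field of positive characteristic the image $\phi(\widetilde G)$ need not have finite index in $G$, since the cokernel of $\widetilde\G(k)\to\G(k)$ embeds in $H^1(k,Z)$ with $Z$ a possibly non-smooth finite center, and e.g.\ $H^1(k,\mu_p)\cong k^*/(k^*)^p$ is infinite when $\operatorname{char}k=p$. So replacing $\Gamma$ by $\Gamma\cap\phi(\widetilde G)$ is not harmless in general. The paper sidesteps this by passing to a finite field extension $\tilde k_i$ over which all preimages under the isogeny are rational (\cite[Corollary I.2.1.3]{Ma91}) and then taking $p^{-1}(\Gamma)$, which surjects onto all of $\Gamma$ with finite kernel; no finite-index claim about the image is needed. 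Your covering argument in the second paragraph, and the reduction of the lattice case via Borel density, are otherwise in line with the paper's proof (though for the lattice statement both you and the paper should really first project away $k$-anisotropic factors, on which a lattice need not be Zariski dense).
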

\begin{proof} 
Let us start with several reductions to simpler cases.

{\sc Step 1.} We may assume that each $\G_i$ is simply connected. 

Indeed, for each $i$ denote by $\widetilde{\G}_i$ the simply connected cover of $\G_i$ and by $p_i: \widetilde{\G}_i \to \G_i$ the corresponding central isogeny (see \cite[Proposition I.1.4.11]{Ma91}). Then $p_i$ is defined over $k_i$, hence $p_i^{-1}(G_i) \subset \widetilde{\G}_i(\tilde{k_i})$ for some finite extension $\tilde{k}_i$ of $k_i$, see \cite[Corollary I.2.1.3]{Ma91}. Setting $\widetilde{G}_i := \widetilde{\G}_i(\tilde{k_i})$, we see that the map $p := \Pi_i p_i: \Pi_i \widetilde{G}_i \to \Pi_i G_i$ has finite kernel. Hence by Proposition \ref{stability} it suffices to check that $p^{-1}(\Gamma)$ is properly proximal. So replacing $\Gamma$ with $p^{-1}(\Gamma)$, $\G_i$ with $\widetilde{\G}_i$ and $k_i$ with $\tilde{k}_i$ we have reduced to the case where each $\G_i$ is simply connected.

{\sc Step 2.} We may assume that each $\G_i$ is almost $k_i$-simple.

This is a direct consequence of the fact that a simply connected semi-simple $k$-group decomposes into a direct product of almost $k$-simple $k$-groups, \cite[Proposition I.1.4.10]{Ma91}.

For each $i$ denote by $\Gamma_i$ the image of $\Gamma$ inside $G_i$ under the projection map $\pi_i : G \to G_i$.

{\sc Step 3.} We may assume that each $\Gamma_i$ has non-compact closure with respect to the locally compact topology on $G_i$.

 Denote by $F$ the set of indices $i$ for which $\overline{\Gamma_i}$ is non-compact in the locally compact topology of $G_i$. Then the projection map $\pi: G \to G' := \Pi_{i \in F} G_i$ satisfies:
\begin{itemize}
\item $\pi(\Gamma)$ is discrete in $G'$ and
\item $\ker(\pi) \cap \Gamma$ is finite.
\end{itemize}
Thanks to Proposition \ref{stability}, the second property above shows that $\Gamma$ is properly proximal if and only if $\pi(\Gamma)$ is. So, after replacing $\Gamma$ with $\pi(\Gamma)$, we may assume without loss of generality that each $\Gamma_i$ has non-compact closure with respect to the locally compact topology.

In this simplified setting, let us now define the family of actions which witness that $\Gamma$ is properly proximal. 
By Lemma \ref{simple case}, for each $i$, we may find parabolic subgroups $\bP_{i,j}$ of $\G_i$ and probability measures $\eta_{i,j}$ on the homogeneous spaces $K_{i,j} := G_i/\bP_{i,j}(k_i)$ such that $\partial G_i = \bigcup_j \partial_{\eta_{i,j}} G_i$ for all $i$. Consider the corresponding actions of $G$ on $K_{i,j}$ obtained by composing with the projection maps $\pi_i$. Extend continuously each $\pi_i$ to a map between the Stone-\v{C}ech compactifications $\pi_i: \Delta G \to \Delta G_i$. We have the equalities
\[\partial G = \bigcup_i \pi_i^{-1}(\partial G_i) = \bigcup_{i,j} \pi_i^{-1}(\partial_{\eta_{i,j}} G_i) = \bigcup_{i,j} \partial_{\eta_{i,j}} G.\]
Since $\Gamma$ is discrete inside $G$, $\partial \Gamma \subset \partial G$ and Lemma \ref{LC boundary} shows that $\partial \Gamma = \bigcup_{i,j} \partial_{\eta_{i,j}}\Gamma$.
Moreover, since $\Gamma_i$ is Zariski-dense in $G_i$ and has non-compact closure inside $G_i$, it follows from Furstenberg's Lemma \cite{Fu76}  that there is no $\Gamma$-invariant probability measure on $K_{i,j}$ for all $i,j$ (\cite[Corollary 3.2.19]{Zi84} for a proof of this precise statement). This concludes the proof of the first statement. For the second statement, note that if $\Gamma < \G(K)$ is a lattice in a semi-simple algebraic group $\G$ over a local field $k$, then up to taking a finite index subgroup we may assume that $\G$ is $k$-connected. Then the result follows from the first part.
\end{proof}

Thanks to \cite[Proposition 3.1.6]{Zi84}, the previous proposition implies that lattices in connected semi-simple real Lie groups with trivial center are properly proximal.

\begin{cor}\label{trivial radical}
A finitely generated subgroup $\Gamma < \GL_d(\bQ)$ with trivial solvable radical is properly proximal.
\end{cor}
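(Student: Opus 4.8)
The plan is to deduce the statement from Proposition~\ref{discrete} by realizing a finite-index subgroup of $\Gamma$ as a discrete subgroup, with Zariski-dense projections, of a product of semisimple groups over local fields. First I would reduce to a number field: since $\Gamma$ is finitely generated, the entries of a finite generating set generate a subfield $K \subset \bQ$ that is finitely generated over $\Q$, and as $\bQ/\Q$ is algebraic, $K$ is a number field with $\Gamma < \GL_d(K)$. Next, let $\G \le \GL_d$ be the Zariski closure of $\Gamma$, an algebraic group defined over the perfect field $K$, and let $R := R(\G^0)$ be the solvable radical of its identity component; in characteristic $0$ it is connected, solvable, defined over $K$, and normal in $\G$, so $\mathbf{H} := \G^0/R$ is a connected semisimple $K$-group. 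Set $\Gamma_0 := \Gamma \cap \G^0$, a finite-index subgroup of $\Gamma$ whose Zariski closure is $\G^0$, and let $q : \G^0 \to \mathbf{H}$ be the quotient map. The crucial use of the hypothesis is that $\Gamma \cap R$ is a normal solvable subgroup of $\Gamma$, hence trivial; consequently $\ker(q|_{\Gamma_0}) = \Gamma_0 \cap R = \Gamma \cap R = \{1\}$, so $q$ restricts to an \emph{injective} homomorphism on $\Gamma_0$, and $q(\Gamma_0)$ is a finitely generated, Zariski-dense subgroup of $\mathbf{H}(K)$ that is isomorphic to $\Gamma_0$.

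The remaining task is to produce discreteness and invoke Proposition~\ref{discrete}. Fixing a faithful $K$-embedding $\mathbf{H} \hookrightarrow \GL_N$ and using that $q(\Gamma_0)$ is finitely generated, there is a finite set $S$ of places of $K$, containing all archimedean ones, such that every generator of $q(\Gamma_0)$ and its inverse have $S$-integral entries, i.e. $q(\Gamma_0) \subset \mathbf{H}(\OO_S)$. By the product formula $\OO_S$ is discrete in $\prod_{v \in S} K_v$, whence $\GL_N(\OO_S)$—and therefore its subgroup $q(\Gamma_0)$—embeds diagonally and discretely into $\prod_{v \in S} \GL_N(K_v)$, thus into the closed subgroup $\prod_{v \in S} \mathbf{H}(K_v)$. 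Each factor is the group of $K_v$-points of the connected semisimple $K_v$-group obtained by base change, and the projection of $q(\Gamma_0)$ to any factor is $q(\Gamma_0)$ itself, which is Zariski dense in $\mathbf{H}$. Proposition~\ref{discrete} then yields that $q(\Gamma_0)$ is properly proximal; since $q(\Gamma_0) \cong \Gamma_0$, the group $\Gamma_0$ is properly proximal, and finally $\Gamma$ is properly proximal by the finite-index stability recorded in Proposition~\ref{stability}. (When $\Gamma$ is infinite the quotient $\mathbf{H}$ is necessarily nontrivial, for otherwise $\Gamma_0 \subseteq \Gamma \cap R = \{1\}$ would force $\Gamma$ finite.)

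The main obstacle is the passage to the semisimple quotient $\G^0/R$: one needs the trivial-solvable-radical hypothesis precisely to ensure that $q$ is injective on $\Gamma_0$—so that proper proximality transfers back from $q(\Gamma_0)$ to $\Gamma_0$—while at the same time not losing discreteness. Since $q$ need not be proper, discreteness cannot be inherited directly; it is instead recovered from the $S$-integral structure of the finitely generated group $q(\Gamma_0)$, which is the key technical input. The supporting algebraic-group facts (that $R$ and $\mathbf{H}$ are defined over the number field $K$, and that $\Gamma \cap R$ captures the entire solvable obstruction) are standard in characteristic $0$.
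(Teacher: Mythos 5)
Your proof is correct and follows essentially the same route as the paper's: reduce to a number field, pass to the semisimple quotient of the Zariski closure of $\Gamma$ (using the trivial solvable radical exactly as you do, to make this quotient map injective on the group), obtain discreteness in a finite product of local-field points via finite generation and $S$-integrality of matrix entries, and then invoke Proposition \ref{discrete} together with the stability properties of Proposition \ref{stability}. The only cosmetic differences are that you pass to the Zariski identity component (via the finite-index subgroup $\Gamma_0$) before quotienting by the radical, whereas the paper quotients first and then passes to the connected component, and that you make the $S$-integer argument explicit where the paper phrases it through the finitely generated ring generated by the entries of $\Gamma$.
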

\begin{proof}
Denote by $R \subset \bQ$ the ring generated by the entries of the elements of $\Gamma$ and by $k$ the subfield of $\bQ$ generated by $R$. Since $\Gamma$ is finitely generated, so is $R$ and $k$ is a number field. Denote by $G < \GL_d(\bQ)$ the Zariski closure of $\Gamma$. Thus $G = \G(\bQ)$ for some algebraic group $\G$ which is defined over $k$ (because $\Gamma$ is Zariski dense inside $G$).

Since $\Gamma$ has trivial solvable radical, it does not intersect the solvable radical of $G$. So moding out by the solvable radical of $G$ if necessary, we may as well assume that $G$ is semi-simple. Moreover, since the Zariski-connected component of the identity has finite index in $G$, we may assume that $G$ is Zariski-connected. So $\G$ is a connected semi-simple $k$-group.

Since $R$ is finitely generated, there are finitely many places $k_\nu$, $\nu \in \cS$ of $k$ such that the diagonal embedding of $R$ into $\Pi_\nu k_\nu$ is discrete. Since $\Gamma$ is contained in $\G(R)$, the diagonal embedding of $\Gamma$ into the product $\Pi_\nu \G(k_\nu)$ is discrete.

For all $\nu$, $\G(k_\nu)$ is semi-simple, so there exists a $k_\nu$-map with finite kernel from $\G(k_\nu)$ onto a product of almost simple $k_\nu$-groups. Again, since $\Gamma$ has trivial solvable radical, this map is injective on $\Gamma$. So we arrive at the situation of Proposition \ref{discrete}, giving the result.
\end{proof}

\begin{que} Let us ask now a few related questions.
\begin{enumerate}[(1)]
\item In the above Corollary, can the finite generation assumption on $\Gamma$ be removed? Is this the case if in the definition of proper proximality we allow countably many pieces $X_i$ (instead of only finitely many)? Note that this softer version of proper proximality would still be good enough for our applications. 
\item Can Corollary \ref{trivial radical} be extended to (finitely generated) subgroups of $\GL_d(\R)$ with trivial solvable radical? For example, is $\SL_d(\Z[t])$ properly proximal? Related to this question, we point out the reference \cite{GHW05} which proves a generalization of the discrete embedding $R \subset \Pi_\nu k_\nu$ used in the previous corollary.
\item As we saw before, properly proximal groups are non-inner amenable. In view of Tucker-Drob's results \cite[Theorem 13 and 14]{TD15}, one may wonder if the converse holds for linear groups: is any non-inner amenable (finitely generated) linear group properly proximal? In particular, does this hold for subgroups of $\GL_d(\bQ)$?
\end{enumerate}
\end{que}

%%%%%
\subsection{Von Neumann algebraic results on properly proximal groups}

\begin{thm}\label{main tech}
Assume that $\Gamma$ is a properly proximal group. Consider a trace preserving action $\sigma: \Gamma \actson (Q,\tau)$ on a tracial von Neumann algebra. Denote $M:=Q\rtimes\Gamma$

Then any weakly compact von Neumann subalgebra $P \subset M$ such that $\cN_M(P)\dpr$ contains $L\Gamma$ admits a corner that embeds into $Q$ inside $M$. %in the sense of Popa.
\end{thm}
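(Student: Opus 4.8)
The plan is to distill the weak compactness data into a single functional on Popa's basic construction $\langle M,e_Q\rangle$ and to read the dichotomy ``$P\prec_M Q$, or else $\Gamma$ is not properly proximal'' off its restriction to $Me_QM$. I identify $L^2M\cong L^2Q\ot\ell^2\Gamma$ via $au_g\leftrightarrow a\ot\delta_g$, so that $e_Q=1\ot p_e$ (with $p_g$ the rank-one projection onto $\delta_g$) and $u_ge_Qu_g^*=1\ot p_g$. For $f\in\ell^\infty(\Gamma)$ I write $m_f:=1\ot M_f\in B(L^2M)$ for the associated diagonal multiplication operator. Then $m_{\delta_g}=u_ge_Qu_g^*$, the projections $\{u_ge_Qu_g^*\}_{g}$ sum to $1$ in the strong topology, so $m_f\in\langle M,e_Q\rangle$ for every $f$; crucially $m_f\in Me_QM$ whenever $f$ is finitely supported, and $u_gm_fu_g^*=m_{g\cdot f}$, where $g\cdot f$ is the left translate.

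First I would invoke weak compactness of the inclusion $P\subset\cN_M(P)\dpr$ to produce, as in \cite{OP10a}, a net of unit vectors $\eta_n\in L^2P\ot\overline{L^2P}$ that is almost $P$-central, almost $\cN_M(P)$-invariant, and satisfies $\langle(x\ot1)\eta_n,\eta_n\rangle\to\tau(x)$ for $x\in P$. Fixing an ultrafilter $\omega$, I define the state $\Phi(T):=\lim_{n\to\omega}\langle(T\ot1)\eta_n,\eta_n\rangle$ on $\langle M,e_Q\rangle$, with $T\ot1$ viewed as left multiplication on $L^2M\ot\overline{L^2M}\cong\mathrm{HS}(L^2M)$. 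Three properties then have to be checked: (a) $\Phi$ is $P$-central, which follows from almost $P$-centrality of $\eta_n$ (giving $(v^*\ot1)\eta_n\approx(1\ot\bar v)\eta_n$ for $v\in\mathcal U(P)$) together with the commutation of $T\ot1$ with $1\ot\bar v$; (b) $\Phi$ is normal on $M$, indeed $\Phi|_M=\tau\circ E_P$, because $\eta_n$ is supported in $L^2P\ot\overline{L^2P}$ and $E_P$ is $P$-bimodular, so that $e_P(x\ot1)e_P$ acts as $E_P(x)$ on the relevant subspace; (c) $\Phi\circ\Ad(u)=\Phi$ for all $u\in\cN_M(P)\dpr$, in particular for $u=u_g$ since $L\Gamma\subset\cN_M(P)\dpr$.

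With $\Phi$ in hand I would argue by cases on $\Phi|_{Me_QM}$. If $\Phi$ does not vanish on $Me_QM$, then $\Phi$ is a $P$-central state on $\langle M,e_Q\rangle$ that is normal on $M$ and non-zero on $Me_QM$, so Popa's intertwining criterion (Lemma~\ref{intertwining}) yields $P\prec_M Q$, which is the conclusion. To rule out the opposite case, suppose $\Phi|_{Me_QM}=0$ and set $\psi(f):=\Phi(m_f)$ for $f\in\ell^\infty(\Gamma)$. Then $\psi$ is a state with $\psi(\mathbf 1)=\Phi(1)=1$; it is left-$\Gamma$-invariant because $\psi(g\cdot f)=\Phi(u_gm_fu_g^*)=\Phi(m_f)=\psi(f)$ by (c); and it vanishes on $c_0(\Gamma)$, since $m_f\in Me_QM$ for finitely supported $f$ and $\Phi|_{Me_QM}=0$. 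Hence $\psi$ descends to a left-$\Gamma$-invariant state on $C(\partial\Gamma)=\ell^\infty(\Gamma)/c_0(\Gamma)$, and restricting to the left-$\Gamma$-invariant subalgebra $C(\partial\Gamma)^{\Gamma_r}$ gives a left-$\Gamma$-invariant state there, contradicting proper proximality of $\Gamma$ via the equivalence $(iii)$ of Theorem~\ref{carac pp}. Thus only the first case survives, and $P\prec_M Q$.

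The main obstacle is property (c): the weak compactness data only supplies almost-invariance of $\eta_n$ under the genuine normalizing unitaries in $\cN_M(P)$, whereas the hypothesis places $L\Gamma$ only inside the generated von Neumann algebra $\cN_M(P)\dpr$, and it is exactly the $\Ad(u_g)$-invariance of $\Phi$ that powers the left-$\Gamma$-invariance of $\psi$. Upgrading the weak compactness of $\cN_M(P)\actson P$ to $\cN_M(P)\dpr\actson P$, so that $\Phi$ is $\Ad(\cN_M(P)\dpr)$-invariant, is the delicate point, handled by the convexity and Kaplansky-density argument of \cite{OP10a}. By comparison, the remaining verifications---that the same net yields a $\Phi$ which is simultaneously $P$-central, normal on $M$, and $\Ad(L\Gamma)$-invariant, and the bookkeeping identity $m_f=\sum_g f(g)\,u_ge_Qu_g^*$ that bridges boundary multiplication with $Me_QM$---are routine.
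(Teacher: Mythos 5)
Your skeleton (either the state survives on $Me_QM$ and Popa's criterion gives $P\prec_M Q$, or it dies there and descends to a left-invariant state on the boundary) is the right one, but the proof collapses at property (c), and this is not a repairable technicality: property (c) is provably false whenever $\Gamma$ is non-amenable. Indeed, suppose some state $\Phi$ on $\langle M,e_Q\rangle$ satisfied $\Phi\circ\Ad(u_g)=\Phi$ for all $g\in\Gamma$. Since $JQJ\subset B(L^2(Q))\ovt\ell^\infty(\Gamma)$ is diagonal, the algebra $1\ot\ell^\infty(\Gamma)$ lies in $(JQJ)'=\langle M,e_Q\rangle$, and your own identity $u_gm_fu_g^*=m_{g\cdot f}$ shows that $f\mapsto\Phi(m_f)$ would be a left-translation-invariant mean on $\ell^\infty(\Gamma)$, i.e.\ $\Gamma$ would be amenable. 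Properly proximal groups are never amenable, so no state with property (c) exists, no matter how the net $\eta_n$ or the ultrafilter is chosen. The reason no ``convexity and Kaplansky-density argument'' can rescue this: $\Ad(u)$-invariance is quadratic in $u$ and $\Phi$ is not normal on $\langle M,e_Q\rangle$, so the genuine invariance your vectors do provide for $u\in\cN_M(P)$ (which holds because $(u^*\ot 1)\eta_n\approx(1\ot\bar u)\eta_n$ and $T\ot 1$ commutes with $1\ot\bar u$) cannot be pushed to $\cU(\cN_M(P)\dpr)$ by any density argument; and, as just shown, the conclusion one would be pushing towards is false.

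The paper's proof is built exactly to dodge this obstruction, and the two features you dropped are both essential. First, the order is reversed: one assumes $P\nprec_M Q$ at the outset, so that by Lemma \ref{intertwining} the state $\varphi$ vanishes on $Q\ot\K(\ell^2\Gamma)$, and this vanishing is an \emph{input} to the invariance argument, not an afterthought. Second, one never proves $\Ad$-invariance of $\varphi$; instead one proves the bilinear identity $\varphi(ux)=\varphi(xu)$ for $u\in\cN_M(P)$ and $x$ in the C$^*$-algebra $C$ generated by $\{u_g\}_{g\in\Gamma}$ and $1\ot B$, where $B=\pi^{-1}\bigl(C(\partial\Gamma)^{\Gamma_r}\bigr)$ is only the part of $\ell^\infty(\Gamma)$ that is right-invariant modulo $c_0(\Gamma)$. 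The proof of that identity starts from the weak-compactness invariance $\varphi\circ\Ad(uJuJ)=\varphi$, approximates $Ju^*J$ strongly by elements of $D=C^*(JQJ,1\ot\rho(\Gamma))$, and uses $[C,D]\subset Q\ot\K(\ell^2\Gamma)$, on which $\varphi$ vanishes; this commutation is precisely where right-invariance of $B$ is needed, since $1\ot M_f$ commutes with $1\ot\rho_h$ modulo compacts exactly when $f-\rho_h(f)\in c_0(\Gamma)$, which fails for general $f\in\ell^\infty(\Gamma)$. Because the identity $\varphi(ux)=\varphi(xu)$ is \emph{linear} in $u$, it does extend from $\cN_M(P)$ to $\cN_M(P)\dpr\supset L\Gamma$ by $\Vert\cdot\Vert_2$-continuity (normality of $\varphi|_M$) and Kaplansky density; taking $x=fu_g^*$ with $f\in 1\ot B$ then gives left-$\Gamma$-invariance of $\varphi$ on $1\ot B$ only, which contradicts proper proximality via Theorem \ref{carac pp}(iii) while remaining perfectly consistent with non-amenability of $\Gamma$. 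In short: your step (c) is impossible, and the fix is not an upgrade of weak compactness but the conditional, linear-in-$u$ trace identity on the right-invariant boundary subalgebra.
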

\begin{proof}
%Denote by $M := Q \rtimes \Gamma$. 
Recall that $L^2(M) \simeq L^2(Q) \ot \ell^2\Gamma$ and that with this identification, $M$ is generated by $Q \ot 1$ and by the unitaries $u_g := \sigma_g \ot \lambda_g$, $g \in \Gamma$. Then $JMJ$ is generated by $JQJ$ and $1 \ot \rho(\Gamma)$, and $JQJ \subset B(L^2(Q)) \ovt \ell^\infty(\Gamma)$.

Assume that $P \subset M$ is a weakly compact inclusion such that $L\Gamma \subset \cN_M(P)\dpr$.  Then there exists a state $\varphi: B(L^2(M)) \to \C$ with the following properties:
\begin{enumerate}[(i)]
\item $\varphi$ is the canonical (normal) trace on $M$ and $JMJ$;
\item $\varphi(xT) = \varphi(Tx)$ for all $T \in B(L^2(M))$, $x \in P$;
\item $\varphi(uJuJT) = \varphi(TuJuJ)$ for all $T \in B(L^2(M))$ and all $u \in \cN_{M}(P)$.
\end{enumerate}

Assume that $P \nprec_M Q$. Then by Lemma \ref{intertwining}, (ii) above implies that $\varphi$ vanishes on $Q \ot \K(\ell^2\Gamma)$. 
Denote by $\pi:  \ell^\infty(\Gamma) \to \ell^\infty(\Gamma)/c_0(\Gamma) \simeq C(\partial\Gamma)$ the canonical projection, and by $B := \pi^{-1}(C(\partial\Gamma)^{\Gamma_r})$. 

Denote by $C := C^*(1 \ot B, \{u_g \mid g \in \Gamma\}) \subset B(L^2(M))$ and by $D := C^*(JQJ,1 \ot \rho(\Gamma)) \subset JMJ$. As explained before, $D$ is strongly dense inside $JMJ$. Note that $[x,y] \in Q \ot \K(\ell^2\Gamma)$ for all $x \in C$ and all $y \in D$.

{\bf Claim.} For all $x \in C$ and all $u \in \cN_M(P)$, we have $\varphi(ux) = \varphi(xu)$.

{\it Proof of the claim.} 
Fix such $x$ and $u$. Applying property (iii) above to the elements $u$ and $T = xJu^*J$ gives that $\varphi(uJuJxJu^*J) = \varphi(xu)$. Take a net $y_k \in D$ which converges to $Ju^*J$ in the strong topology of $JMJ$. Since $\varphi_{|JMJ}$ is the normal trace on $JMJ$, denoting $\|y\|_{\varphi}=\sqrt{\varphi(y^*y)}$ for $y\in {B(L^2(M))}$ and applying the Cauchy-Schwarz inequality, we get the following two computations:
\[\lim_k \vert \varphi(uJuJxJu^*J) - \varphi(uJuJxy_k)\vert \leq \lim_k \Vert (uJuJx)^* \Vert_{\varphi} \Vert Ju^*J - y_k \Vert_{\varphi} = 0.\]
\[\lim_k \vert \varphi(ux) - \varphi(uJuJy_kx)\vert = \lim_k \varphi(JuJ(Ju^*J - y_k)ux) \vert \leq \lim_k \Vert JuJ - y_k^*\Vert_{\varphi}\Vert ux\Vert_\varphi = 0.\]

As we observed, for all $k$, $xy_k - y_kx \in Q \ot \K(\ell^2\Gamma)$. Since $\varphi$ vanishes on $Q \ot \K(\ell^2\Gamma)$, we get that  $\vert \varphi(uJuJy_kx) -  \varphi(uJuJxy_k)\vert \leq \Vert xy_k - y_kx \Vert_\varphi = 0$. Therefore, we may combine the two computations above and get
\[\varphi(xu) = \varphi(uJuJxJu^*J) = \lim_k  \varphi(uJuJxy_k) = \lim_k  \varphi(uJuJy_kx) = \varphi(ux).\]
This proves the claim.

Since $\varphi$ is tracial on $M$, we obtain that  $\varphi(ux) = \varphi(xu)$, for all $x \in C$ and $u \in \cN_M(P)\dpr$. In particular, if $L\Gamma \subset \cN_M(P)\dpr$, we obtain $\varphi(u_gfu_g^*) = \varphi(f)$ for all $g \in \Gamma$ and $f \in 1 \ot \cB(\partial\Gamma)^{\Gamma_r}$. Since $u_gfu_g^* = \lambda_g(f)$ for all $g \in \Lambda$ and $f \in 1 \ot B$. We deduce that the restriction of $\varphi$ to $1 \ot B$ is a left $\Gamma$-invariant state, which vanishes on $1 \ot c_0(\Gamma)$. Hence it factors to a left invariant state on $C(\partial\Gamma)^{\Gamma_r}$, which contradicts the fact that $\Gamma$ is properly proximal, by Theorem \ref{carac pp}.
\end{proof}

Under the additional assumption that the properly proximal group $\Gamma$ is weakly amenable, we may use Popa and Vaes' result \cite[Theorem 5.1]{PV12a} to deduce Cartan rigidity results. Specifically, we obtain the following theorem.

\begin{thm}\label{C-rigid2}
Assume that $\Gamma$ is properly proximal and weakly amenable. Consider a trace preserving action $\Gamma \actson (Q,\tau)$ on a tracial von Neumann algebra. Denote $M:=Q\rtimes\Gamma$.

Then for any amenable von Neumann subalgebra $P \subset M$ such that $\cN_M(P)\dpr$ contains $L\Gamma$ we have that $P \prec_M Q$. In particular, $\Gamma$ is Cartan-rigid.
\end{thm}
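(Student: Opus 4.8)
The plan is to deduce this from Theorem~\ref{main tech}: that theorem already yields $P \prec_M Q$ as soon as one knows that the inclusion $P \subset M$ is weakly compact in the sense of Definition~\ref{WC} (i.e. that the conjugation action $\cN_M(P) \actson P$ is weakly compact), together with $L\Gamma \subset \cN_M(P)\dpr$. Since the latter containment is part of the hypothesis, the entire point is to upgrade the amenability of $P$ to this weak compactness, and this is exactly where the weak amenability of $\Gamma$ enters.

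First I would observe that an amenable $P$ is in particular amenable relative to $Q$ inside $M$. Together with the weak amenability of $\Gamma$, this places us in the setting of \cite[Theorem 5.1]{PV12a}, whose conclusion is precisely that the inclusion $P \subset \cN_M(P)\dpr$ is weakly compact: there is a state $\varphi$ on $B(L^2 P)$ with $\varphi_{|P} = \tau$ that is invariant under $\Ad(u)$ for every $u \in \cU(P) \cup \cN_M(P)$. The mechanism, going back to \cite{OP10a}, is that weak amenability supplies a net of finitely supported Herz--Schur multipliers on $\Gamma$ with uniformly bounded completely bounded norms converging pointwise to $1$; extending the associated maps to $M$ and averaging them against a hypertrace for the amenable algebra $P$ produces the required normalizer-invariant state. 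I would simply invoke \cite[Theorem 5.1]{PV12a} for this step.

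With weak compactness of $P \subset M$ established and the standing assumption $L\Gamma \subset \cN_M(P)\dpr$, Theorem~\ref{main tech} applies directly and provides a corner of $P$ embedding into $Q$, i.e. $P \prec_M Q$. For the ``in particular'' statement, I would specialize to $Q = L^\infty(X,\mu)$ coming from a free ergodic p.m.p. action $\Gamma \actson (X,\mu)$, so that $M$ is a II$_1$ factor, and let $A \subset M$ be an arbitrary Cartan subalgebra. Then $A$ is abelian, hence amenable, and $\cN_M(A)\dpr = M \supset L\Gamma$; the main assertion gives $A \prec_M L^\infty(X,\mu)$. Since $A$ and $L^\infty(X,\mu)$ are both Cartan subalgebras of $M$, this intertwining upgrades to a unitary conjugacy $uAu^* = L^\infty(X,\mu)$ for some $u \in \cU(M)$, by the standard fact that one Cartan subalgebra embedding into another forces unitary conjugacy (see \cite{Po01,PV12a}). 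Hence $L^\infty(X,\mu)$ is the unique Cartan subalgebra of $M$ up to unitary conjugacy, which is the $\cC$-rigidity of $\Gamma$.

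The main obstacle is the weak-compactness step. The delicate point is to manufacture a single state on $B(L^2 P)$ that is simultaneously invariant under $\cU(P)$ (which comes for free from amenability of $P$ via a hypertrace) and under the entire normalizer $\cN_M(P)$; it is exactly the uniform completely bounded bound coming from weak amenability that lets one reconcile these two invariances. In the present write-up this difficulty is entirely absorbed into the citation of \cite[Theorem 5.1]{PV12a}, while the remaining steps are formal.
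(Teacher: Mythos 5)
Your reduction to Theorem~\ref{main tech} hinges entirely on the claim that \cite[Theorem 5.1]{PV12a} asserts weak compactness of the inclusion $P \subset M$, and that claim is false; this is a genuine gap, not a citation slip. What \cite[Theorem 5.1]{PV12a} produces is not a state on $B(L^2(P))$ that is exactly $\Ad$-invariant under $\cU(P)\cup\cN_M(P)$ and restricts to the trace, but a net of \emph{normal} states $\omega_i$ on the much smaller algebra $\cN = N \ovt L\Gamma$, where $N$ is generated by $B$ and $P^{\op}$ acting on the bimodule $L^2(M)\ot_A L^2(P)$, and the invariance under the normalizer is only \emph{approximate}: $\Vert \omega_i\circ\Ad(\pi(u)\theta(\bar u)) - \omega_i\Vert \to 0$. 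The mechanism you sketch --- averaging Herz--Schur multipliers against a hypertrace of $P$ --- is the argument of \cite{OP10a}, and it requires the multipliers to have completely bounded norm tending to $1$, i.e.\ weak amenability with Cowling--Haagerup constant $1$ (CMAP); when the uniform bound is some $\Lambda > 1$, the averaged functionals need not be positive and no invariant state is produced. Whether weak amenability with constant $>1$ forces amenable subalgebras to be weakly compact is exactly the difficulty that \cite{PV12a,PV12b} had to circumvent, and it is not resolved by their Theorem 5.1. So your argument only covers the CMAP case, while the theorem is stated for arbitrary weakly amenable $\Gamma$.

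The paper's own organization reflects this obstruction: if weak amenability did upgrade amenability to weak compactness, Theorem~\ref{C-rigid} would be an immediate corollary of Theorem~\ref{wc cartan} and Proposition~\ref{PV prop} would be redundant. Instead, the paper proves Proposition~\ref{PV prop} --- a statement about tensor products $M = B \ovt L\Gamma$ --- by running the proper-proximality argument directly on the state $\Omega(T) = \lim_i \langle T\xi_i,\xi_i\rangle$ built from the PV12a net, using only normality, $A$-centrality, and the approximate normalizer-invariance (its Claim 2 is proved ``as in Theorem~\ref{main tech}'' but is not an application of that theorem), and then deduces Theorem~\ref{C-rigid2} via Popa's comultiplication trick: the dual coaction $\Delta\colon M \to M \ovt L\Gamma$, $\Delta(au_g) = au_g\ot u_g$, applied with $B:=M$ and $A:=\Delta(P)$, converts the crossed product problem into the tensor product one where \cite[Theorem 5.1]{PV12a} is usable. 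This reduction is entirely absent from your proposal and cannot be replaced by an appeal to Theorem~\ref{main tech}. (Your final step --- upgrading $A \prec_M L^\infty(X)$ to unitary conjugacy of Cartan subalgebras via \cite[Theorem A.1]{Po01} --- is correct and matches the paper.)
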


In order to be able to apply \cite[Theorem 5.1]{PV12a}, we have to first establish the following statement about tensor products (as opposed to arbitrary crossed-products). 

\begin{prop}\label{PV prop}
Consider a group $\Gamma$ which is both properly proximal and weakly amenable. Consider also any tracial von Neumann algebra $B$ and set $M := B \ovt L\Gamma$.

Let $A \subset M$ be an amenable von Neumann subalgebra and assume that for any $g \in \Gamma$, there exists a unitary $w_g \in \cU(B)$ such that $w_g \ot u_g$ belongs to $\cN_M(A)\dpr$. %\footnote{This does not mean that $w_g \ovt u_g$ itself normalizes $A$.}. %Here we denoted by $u_g \in L\Gamma$, $g \in \Gamma$, the canonical group unitaries.

Then $A \prec_M B$.
\end{prop}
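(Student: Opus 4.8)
The plan is to mirror the proof of Theorem~\ref{main tech}, viewing $M = B \ovt L\Gamma$ as the crossed product $B \rtimes \Gamma$ for the trivial action (so that $Q = B$), and to replace the role played there by $L\Gamma$ with the cocycle-twisted unitaries $v_g := w_g \ot u_g \in \cN_M(A)\dpr$. The one genuinely new ingredient, compared with Theorem~\ref{main tech} where weak compactness is a hypothesis, is that here weak compactness must be produced: since $\Gamma$ is weakly amenable and $A$ is amenable (hence amenable relative to $B$), the Ozawa--Popa machinery \cite{OP10a} shows that the inclusion $A \subset M$ is weakly compact. As in the proof of Theorem~\ref{main tech}, this furnishes a state $\varphi$ on $B(L^2(M))$ which restricts to the canonical trace on $M$ and on $JMJ$, which is $A$-central, and which satisfies $\varphi(uJuJ\,T) = \varphi(T\,uJuJ)$ for all $T \in B(L^2(M))$ and $u \in \cN_M(A)$.

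Suppose towards a contradiction that $A \nprec_M B$. Identifying $\langle M, e_B\rangle$ with $B \ovt B(\ell^2\Gamma)$ and the ideal generated by $Me_BM$ with $B \ot \K(\ell^2\Gamma)$, the characterization of intertwining (Lemma~\ref{intertwining}) together with the $A$-centrality of $\varphi$ forces $\varphi$ to vanish on $B \ot \K(\ell^2\Gamma)$. Next, exactly as in Theorem~\ref{main tech}, set $\cB := \pi^{-1}(C(\partial\Gamma)^{\Gamma_r}) \subset \ell^\infty(\Gamma)$, where $\pi: \ell^\infty(\Gamma) \to C(\partial\Gamma)$ is the quotient map, and put $C := C^*(1 \ot \cB, \{v_g : g \in \Gamma\})$ and $D := C^*(JBJ, 1 \ot \rho(\Gamma))$, so that $D$ is strongly dense in $JMJ$. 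The crucial point is that the cocycle twist is harmless for the commutator estimate $[C,D] \subset B \ot \K(\ell^2\Gamma)$: indeed $v_g \in M$ commutes with $D \subset JMJ$; the operator $1 \ot f$ (for $f \in \cB$) commutes with $JBJ \subset B(L^2(B)) \ot 1$ since they act on different tensor legs; and $[1 \ot f, 1 \ot \rho_h] = 1 \ot [f,\rho_h]$ lies in $1 \ot \K(\ell^2\Gamma)$ because the boundary value of $f$ is right-$\Gamma$-invariant. With this in hand, the ``claim'' in the proof of Theorem~\ref{main tech} goes through verbatim: using the property $\varphi(uJuJT)=\varphi(TuJuJ)$, approximating $Ju^*J$ in the strong topology by elements of $D$, and using that $\varphi$ vanishes on $B \ot \K(\ell^2\Gamma)$, one obtains $\varphi(ux) = \varphi(xu)$ for all $x \in C$ and $u \in \cN_M(A)$, hence for all $u \in \cN_M(A)\dpr$ by traciality.

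Finally I would extract the contradiction with proper proximality. Applying the previous identity to $u = v_g \in \cN_M(A)\dpr$ and $x = (1 \ot f)v_g^* \in C$ gives $\varphi(v_g(1 \ot f)v_g^*) = \varphi(1 \ot f)$. Here the twist cancels: $v_g(1 \ot f)v_g^* = w_gw_g^* \ot \lambda_g f \lambda_g^* = 1 \ot \lambda_g(f)$, so the state $f \mapsto \varphi(1 \ot f)$ on $\cB$ is invariant under the left translation action of $\Gamma$. Since $1 \ot c_0(\Gamma) \subset 1 \ot \K(\ell^2\Gamma)$, on which $\varphi$ vanishes, this state descends to a left-$\Gamma$-invariant state on $\cB / c_0(\Gamma) = C(\partial\Gamma)^{\Gamma_r}$, contradicting proper proximality of $\Gamma$ via Theorem~\ref{carac pp}; therefore $A \prec_M B$. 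I expect the main obstacle to be the weak-compactness step of the first paragraph --- pinning down that weak amenability of $\Gamma$ together with (relative) amenability of $A$ yields a state $\varphi$ on $B(L^2(M))$ with exactly properties (i)--(iii) of Theorem~\ref{main tech} --- whereas the heart of the matter is the verification that the $B$-valued cocycle $(w_g)$ obstructs neither the commutator estimate nor the final cancellation $v_g(1 \ot f)v_g^* = 1 \ot \lambda_g(f)$, which is precisely what allows the boundary argument of Theorem~\ref{main tech} to run in this twisted tensor-product setting.
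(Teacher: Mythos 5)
Your second and third paragraphs (the commutator estimate, the claim that $\varphi(ux)=\varphi(xu)$ for normalizers, and the final cancellation $v_g(1\ot f)v_g^* = 1\ot\lambda_g(f)$ producing a left-invariant state on $C(\partial\Gamma)^{\Gamma_r}$) do match the endgame of the paper's proof, which runs its Claims 1 and 2 in exactly this spirit. But there is a genuine gap at the foundation: your first step asserts that weak amenability of $\Gamma$ together with amenability of $A$ yields, via \cite{OP10a}, weak compactness of the inclusion $A\subset M$, i.e.\ a state $\varphi$ on $B(L^2(M))$ with properties (i)--(iii) of Theorem~\ref{main tech}. This is not what \cite{OP10a} proves: the Ozawa--Popa weak compactness theorem requires the \emph{complete metric} approximation property, i.e.\ weak amenability with Cowling--Haagerup constant $1$, and its proof breaks down for constant $>1$ (which is allowed here --- e.g.\ lattices in $Sp(n,1)$ are weakly amenable with constant $2n-1$). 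Indeed, if weak compactness of amenable subalgebras were available under mere weak amenability, Proposition~\ref{PV prop} would be an immediate corollary of Theorem~\ref{main tech} (with $Q=B$ and trivial action), and the paper would have no reason to prove it separately; the existence of this proposition is itself the signal that the shortcut is unavailable.

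What the paper does instead is invoke \cite[Theorem 5.1]{PV12a}, which is precisely the substitute for weak compactness valid at arbitrary Cowling--Haagerup constant. That theorem produces a net of \emph{normal} states $\omega_i$ on $\cN := N \ovt L\Gamma$, where $N$ is generated by $B$ and $P^{\op}$ (with $P=\cN_M(A)\dpr$) acting on $L^2(M)\ot_A L^2(P)$, satisfying approximate invariance under $\Ad(\pi(u)\theta(\bar u))$ for $u\in\cN_M(A)$. Converting the $\omega_i$ to their canonical implementing vectors $\xi_i$ in the standard form $H\ot\ell^2(\Gamma)$ of $\cN$, one obtains a limit state $\Omega$ on $B(H\ot\ell^2(\Gamma))$ whose trace, centrality and conjugation-invariance properties hold only relative to the subalgebra $B\ovt B(\ell^2(\Gamma))$, and are phrased through both embeddings $\pi$, $\theta$ and the modular conjugation $\cJ$. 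This is strictly weaker than what weak compactness would provide, but it is enough: identifying $B\ovt B(\ell^2(\Gamma))$ with $\langle M,e_B\rangle$, Lemma~\ref{intertwining} gives your Claim 1, and the rest of your argument (commutators into $B\ot\K(\ell^2\Gamma)$, the trace-like identity for normalizers, and the cocycle cancellation contradicting Theorem~\ref{carac pp}) then goes through. So the repair is not a refinement of your first paragraph but its replacement by the Popa--Vaes machinery; as written, the proof does not stand.
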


Before proving Proposition \ref{PV prop} let us deduce Theorem \ref{C-rigid2} from it.

\begin{proof}[Proof of Theorem \ref{C-rigid2}]
As in the statement of the theorem, set $M := Q \rtimes \Gamma$ and take $P \subset M$ an amenable subalgebra such that $\cN_M(P)\dpr$ contains $L\Gamma$.

Denote by $\Delta: M \to M \ovt L\Gamma$ the dual co-action of the action $\Gamma \actson Q$. Namely $\Delta$ is the *-homomorphism characterized by the formula
\[\Delta(au_g) := au_g \ot u_g, \quad \text{for all  } a \in Q, g \in \Gamma.\]
Set $B := M$ and $A := \Delta(P) \subset B \ovt L\Gamma$. By assumption the unitaries $\Delta(u_g) = u_g \ot u_g$ belong to the von Neumann algebra generated by the normalizer of $A$. So the proposition applies and shows that $A \prec_{B \ovt L\Gamma} B$. It is routine to check that this last fact implies that $P \prec_M Q$, proving the theorem.
\end{proof}

\begin{proof}[Proof of Proposition \ref{PV prop}]
Since $\Gamma$ is weakly amenable we may apply \cite[Theorem 5.1]{PV12a}. Let us introduce the corresponding notation. Recall that $M = B \ovt L\Gamma$ and $A \subset M$ is amenable. Denote $P := \cN_M(A)\dpr$, and recall that there are unitaries $w_g \in B$ such that $w_g \ot u_g \in P$ for all $g \in \Gamma$. Define $N$ as the von Neumann algebra acting on $L^2(M) \ot_A L^2(P)$ generated by $B$ and $P^{\op}$ and define $\cN := N \ovt L\Gamma$. Consider the two natural embeddings
\[\pi: a \ot u_g \in M \mapsto a \ot u_g \in \cN \quad \text{and} \quad \theta: y^{\op} \in P^{\op} \mapsto y^{op} \ovt 1 \in \cN.\]
It follows that $\cN$ is generated by the two commuting subalgebras $\pi(M)$ and $\theta(P^{\op})$. %Note that it would be equivalent to define $\cN$ as the von Neumann algebra generated by $M$ and $P^{\op}$ on $L^2(\langle M,e_B\rangle) \ot_A L^2(P)$.
With this notation, \cite[Theorem 5.1]{PV12a} tells us that there exists a net of normal states $\omega_i \in \cN_*$ such that 
\begin{itemize}
\item $\omega_i(\pi(x)) \to \tau(x)$ for all $x \in M$;
\item $\omega_i(\pi(a)\theta(\bar{a})) \to 1$ for all $a \in \cU(A)$ (here $\bar{a}$ denotes $(a^{\op})^*$);
\item $\Vert \omega_i \circ \Ad(\pi(u)\theta(\bar{u})) - \omega_i \Vert \to 0$ for all $u \in \cN_M(A)$.
\end{itemize}
Let us denote by $H$ a standard Hilbert space for $N$ with the corresponding anti-unitary involution $J: H \to H$. Then $\cN$ is standardly represented on $H \ot \ell^2(\Gamma)$, the associated anti-unitary involution $\cJ$ being defined by $\cJ(\xi \ot \delta_g) = J\xi \ot \delta_{g^{-1}}$, $\xi \in H$, $g \in \Gamma$.

For all $i$ denote by $\xi_i \in H \ot \ell^2(\Gamma)$ the canonical positive (unit) vector implementing the normal state $\omega_i$. Then as observed in \cite[Section 6]{PV12a}, the above properties of the states $\omega_i$ translate into properties of the vectors $\xi_i$ (see \cite[(6.1)-(6.3)]{PV12a}):
\begin{itemize}
\item $\langle\pi(x)\xi_i,\xi_i\rangle \to \tau(x)$ for all $x \in M$;
\item $\Vert\pi(a)\theta(\bar{a})\xi_i-\xi_i\| \to 0$ for all $a \in \cU(A)$;
\item $\Vert \pi(u)\theta(\bar{u})\cJ \pi(u)\theta(\bar{u})\cJ\xi_i - \xi_i \Vert \to 0$ for all $u \in \cN_M(A)$.
\end{itemize}

Further, define a state 
\[\Omega : T \in B(H \ot \ell^2(\Gamma)) \mapsto \lim_i \langle T\xi_i,\xi_i \rangle \in \C.\]
Note that by definition, $H$ is the standard space of $N = B \vee \theta(P^{\op})$. Then we see that the subalgebra $B \ovt B(\ell^2(\Gamma)) \subset B(H \ot \ell^2(\Gamma))$ commutes with $\theta(P^{op})$ and with $\cJ\theta(P^{\op})\cJ$. The state $\Omega$  is then easily seen to satisfy the following properties
\begin{enumerate}
\item $\Omega(\pi(x)) = \tau(x) = \Omega(\cJ\pi(x)\cJ)$ for all $x \in M$;
\item $\Omega(\pi(a)T) = \Omega(\theta(a^{\op})T) = \Omega(T\theta(a^{\op})) = \Omega(T\pi(a))$ for all $T \in B \ovt B(\ell^2(\Gamma))$, $a \in A$;
\item $\Omega(\text{Ad}(\pi(u)\cJ\pi(u)\cJ) (T)) = \Omega(\text{Ad}(\theta(u^{\op})\cJ \theta(u^{op})\cJ)(T)) = \Omega(T)$, for all $u \in \cN_M(A)$ and all $T \in B \ovt B(\ell^2(\Gamma))$.
\end{enumerate}
For the sake of a contradiction, assume that $A \nprec_M B$.

{\bf Claim 1.} $\Omega$ vanishes on $B \ot_{\min} \K(\ell^2(\Gamma))$.

Let us forget about the $\cJ$-map for now and only look at conditions $(1)$ and $(2)$ above. Then the algebra $B \ovt B(\ell^2(\Gamma))$ is isomorphic to the basic construction $\langle M,e_B\rangle$ and the embedding $\pi$ corresponds exactly to the canonical embedding $M \subset \langle M,e_B\rangle$. Then $\Omega$ becomes an $A$-central state on $\langle M,e_B\rangle$, which is tracial on $M$. The claim then follows from Lemma \ref{intertwining}.

Now we continue the proof in a similar fashion to the proof of Theorem \ref{main tech}. Denote by $\pi:  \ell^\infty(\Gamma) \to \ell^\infty(\Gamma)/c_0(\Gamma) \simeq C(\partial\Gamma)$ the canonical projection, and by $A := \pi^{-1}(C(\partial\Gamma)^{\Gamma_r})$.

%The restriction of $\Omega$ to $1 \ot \ell^\infty(\Gamma)$ defines a measure $\mu \in \Prob(\Delta\Gamma)$ which is supported on $\partial \Gamma$.
%By assumption $\Gamma$ is virtually properly proximal hence there is no left $\Gamma$-invariant state on $\cB(\partial\Gamma)^{\Gamma_r}$.
%Extend $\Omega$ to a normal state on $(B(H\otimes\ell^2(\Gamma)))^{**}$, still denoted by $\Omega$. Then $(B \ovt B(\ell^2(\Gamma)))^{**}\subset (B(H\otimes\ell^2(\Gamma)))^{**}$, and condition $(3)$ holds for every $T\in (B \ovt B(\ell^2(\Gamma)))^{**}$ and $u \in \cN_M(A)$.

We will check that the restriction of $\Omega$ to $1 \ot A$ is left $\Gamma$-invariant, contradicting proper proximality by Claim 1 and Theorem \ref{carac pp}.

Denote by $C$ the C*-subalgebra of $B \ovt B(\ell^2(\Gamma))$ generated by $\pi(M)$ and by $1 \ot A$. One checks that $\Vert [x,y]\Vert_\varphi = 0$ for all $x \in C$ and $y \in D:= \cJ\pi(B \ot_{\min} C^*_r(\Gamma))\cJ$. Then one proves the following claim exactly as in the proof of Theorem \ref{main tech}.

{\bf Claim 2.} For all $x \in C$ and all $u \in \cN_M(A)$ we have $\Omega(x\pi(u))=\Omega(\pi(u)x) $.

From this claim it follows that $\Omega(1 \ot \lambda_g(f)) = \Omega((w_g \ot u_g)(1 \ot f) (w_g \ot u_g)^*) = \Omega(1 \ot f)$ for all $g \in \Gamma$ and $f \in \cB(\partial\Gamma)^{\Gamma_r}$. This concludes the proof.
\end{proof}

Observe now that Theorem \ref{wc cartan} follows from Theorem \ref{main tech} and Theorem \ref{C-rigid} follows from Theorem \ref{C-rigid2}, thanks to the fact that two Cartan subalgebras $A,B$ in a given tracial von Neumann algebra $M$ are unitary conjugate if and only if $A \prec_M B$, \cite[Theorem A.1]{Po01}.

%%%%%%%%%%
\section{Bi-exactness towards a boundary piece}
Proper proximality for groups $\Gamma$ requires the existence of boundary pieces $X\subset\partial\Gamma$ which satisfy the following weak, global property: the left $\Gamma$-action on $C(X)^{\Gamma_r}$ does not admit an invariant state.  In this section, we investigate a stronger property for boundary pieces, called bi-exactness, obtained by assuming that the above action is amenable. This is motivated by a result from \cite[Section 15]{BO08} showing that $\Gamma$ is bi-exact precisely when the left $\Gamma$-action on $C(\partial\Gamma)^{\Gamma_r}$ is amenable. The results in this section are direct adaptations of \cite[Section 15]{BO08}. Nevertherless, we will recall most of the proofs for the convenience of the reader.

\begin{defn}\cite{AD87} \label{amenable}
A continuous action $\Gamma \actson K$ on a compact space is said to be \emph{amenable} if there exists a net of continuous maps $P_n: K \to \Prob(\Gamma)$ such that
\[\lim_n \sup_{x \in K} \Vert P_n(gx) - g \cdot P_n(x)\Vert_1 = 0, \text{ for all } g \in \Gamma.\]
\end{defn}

\begin{defn}
Given a group $\Gamma$ and a boundary piece $X \subset \partial \Gamma$, we say that $\Gamma$ is \emph{bi-exact towards $X$} if the left $\Gamma$-action on the Gelfand spectrum of $C(X)^{\Gamma_r}$ is amenable.
\end{defn}

\begin{rem}
In \cite[Section 15]{BO08} the notion of bi-exactness {\it relative} to a family of subgroups $\cG$ of $\Gamma$ is considered. One can check that this notion is equivalent to bi-exactness towards the boundary piece $X(\cG)$ given in Example \ref{relative piece}.
\end{rem}

Let us provide several equivalent formulations of this directional version of bi-exactness. This is reminiscent of \cite[Proposition 15.2.3]{BO08} and related facts. We provide a slightly different argument, which does not rely on Choi-Effros lifting theorem, nor on Voiculescu's theorem.

\begin{thm}\label{bi-exact equivalence}
Take a countable group $\Gamma$ with a non-empty boundary piece $X \subset \partial \Gamma$. The following assertions are equivalent.
\begin{enumerate}[(i)]
\item $\Gamma$ is bi-exact towards $X$;
\item There exists an amenable action $\Gamma \actson K$ and $\eta \in \Prob(K)$ such that $X \subset \partial_\eta \Gamma$;
\item $\Gamma$ is exact and there exists a map $\mu: \Gamma \to \Prob(\Gamma)$ such that 
\[\lim_{g \to \omega} \Vert \mu(sgt) - s \cdot \mu(g)\Vert_1 = 0, \text{ for all } s,t \in \Gamma, \omega \in X.\]
%\item $\Gamma$ is exact and there exists a two-sided array into the left regular representation $b : \Gamma \to \ell^2(\Gamma)$ whose boundary piece (see Lemma \ref{array piece}) contains $X$.
%\item The left-right action $\Gamma \times \Gamma \actson X$ is amenable.
\end{enumerate}
\end{thm}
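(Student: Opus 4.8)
The plan is to prove the cycle $(i)\Rightarrow(ii)\Rightarrow(iii)\Rightarrow(i)$. The first implication is essentially already contained in Lemma~\ref{all proximal}: bi-exactness towards $X$ means exactly that the left $\Gamma$-action on $C(X)^{\Gamma_r}$ is amenable, so the second bullet of Lemma~\ref{all proximal} produces a continuous action $\Gamma\actson K$ and a measure $\eta\in\Prob(K)$ with $X=\partial_\eta\Gamma$ for which $\Gamma\actson K$ is topologically amenable. This is precisely $(ii)$ (note that $(ii)$ imposes no invariance requirement on $\eta$).

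For $(ii)\Rightarrow(iii)$, first observe that an amenable action on a nonempty compact space forces $\Gamma$ to be exact. To produce $\mu$, let $P_n\colon K\to\Prob(\Gamma)$ be the maps of Definition~\ref{amenable} and set $\mu_n(g):=\int_K P_n\,\mathrm d(g\cdot\eta)\in\Prob(\Gamma)$, the barycentre of $P_n$ against the orbit measure. Two estimates drive the argument. Averaging the defining inequality of Definition~\ref{amenable} gives $\sup_g\|\mu_n(sg)-s\cdot\mu_n(g)\|_1\le\sup_{x\in K}\|P_n(sx)-s\cdot P_n(x)\|_1\to0$. On the other hand, for fixed $n$ the set $P_n(K)$ is norm-compact in $\ell^1(\Gamma)$, hence has uniformly small tails; combining this with the weak-$*$ convergence $(gt)\cdot\eta-g\cdot\eta\to0$ (valid since $X\subset\partial_\eta\Gamma$) and the continuity of each coordinate $x\mapsto P_n(x)(\gamma)$ yields $\lim_{g\to\omega}\|\mu_n(gt)-\mu_n(g)\|_1=0$ for all $\omega\in X$. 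Thus each $\mu_n$ is right-invariant towards $X$ and almost left-equivariant. The remaining task is to patch the $\mu_n$ into a single $\mu\colon\Gamma\to\Prob(\Gamma)$, which I would do exactly as in the proof of $(iv)\Rightarrow(iii)$ in Theorem~\ref{carac pp}: choose a quasi-central approximate unit $\alpha_n\in I_0(X)$ for the two-sided $\Gamma$-action (Lemma~\ref{quasicentral}) and set $\mu:=\sum_n(\alpha_{n+1}-\alpha_n)\mu_{i(n)}$ for a rapidly growing sequence $i(n)$, the telescoping estimates showing that $g\mapsto\|\mu(sgt)-s\cdot\mu(g)\|_1$ lies in $I_0(X)$, i.e. tends to $0$ along every $\omega\in X$.

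For $(iii)\Rightarrow(i)$, the idea is to feed $\mu$ through the exactness of $\Gamma$. Since $\Gamma$ is exact, its left translation action on $\Delta\Gamma$ is amenable, so there are maps $\zeta_n\colon\Gamma\to\Prob(\Gamma)$ with $\sup_g\|\zeta_n(sg)-s\cdot\zeta_n(g)\|_1\to0$ for every $s$. Convolving, put $\nu_n(g):=\sum_{h}\mu(g)(h)\,\zeta_n(h)\in\Prob(\Gamma)$. A direct computation shows that $\nu_n$ is again full mass and inherits both asymptotic properties towards $X$: one has $\|\nu_n(gt)-\nu_n(g)\|_1\le\|\mu(gt)-\mu(g)\|_1\to0$ as $g\to\omega\in X$, and $\limsup_{g\to\omega}\|\nu_n(sg)-s\cdot\nu_n(g)\|_1\le\sup_h\|\zeta_n(sh)-s\cdot\zeta_n(h)\|_1\to0$. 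By the first inequality the coordinate functions $g\mapsto\nu_n(g)(\gamma)$ restrict to elements of $C(X)^{\Gamma_r}=C(\tilde X)$ and hence descend to continuous maps from $\tilde X$ into the positive part of the unit ball of $\ell^1(\Gamma)$ which are almost $\Gamma$-equivariant; after correcting the (almost invariant) mass defect these yield the net required by Definition~\ref{amenable}, that is $(i)$. Equivalently, one may phrase all of $(iii)\Rightarrow(i)$ as nuclearity of $C(X)^{\Gamma_r}\rtimes_r\Gamma$, building the requisite u.c.p. approximations of the identity out of the $\nu_n$ together with nuclearity of the uniform Roe algebra.

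The main obstacle is the same delicate passage in both non-trivial directions: converting data indexed by $\Gamma$ (asymptotic towards $X$ and of full mass on $\Gamma$) into a single honest object on the compact space $\tilde X$ — a single map $\mu$ in $(ii)\Rightarrow(iii)$, and genuine continuous amenability maps on $\tilde X$ in $(iii)\Rightarrow(i)$. The difficulty is that mass can escape to infinity, so a naive boundary limit of $\nu_n$ produces only sub-probability measures on $\tilde X$; exactness of $\Gamma$ is exactly what is needed to reabsorb this escaping mass, and the assembly is carried out through the quasi-central approximate-unit argument rather than through the Choi--Effros lifting and Voiculescu theorems used in \cite[Section~15]{BO08}.
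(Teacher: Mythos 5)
Your implications $(i)\Rightarrow(ii)$ and $(ii)\Rightarrow(iii)$ essentially coincide with the paper's proof: $(i)\Rightarrow(ii)$ is Lemma \ref{all proximal}, and for $(ii)\Rightarrow(iii)$ the paper builds the same barycentres $\mu_n(g)=\int_K P_n(gx)\,d\eta$ and proves exactly your two estimates (norm-compactness of $P_n(K)$ gives uniformly small tails, and weak-$*$ proximality gives right almost-invariance towards $X$). The only divergence is the assembly step: the paper invokes \cite[Exercise 15.1.1]{BO08} \emph{mutatis mutandis}, whereas you run the quasi-central approximate unit argument from the proof of $(iv)\Rightarrow(iii)$ of Theorem \ref{carac pp}. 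That substitution is legitimate and arguably more self-contained, with one small repair: $\sum_n(\alpha_{n+1}-\alpha_n)(g)\,\mu_{i(n)}(g)$ has total mass $1-\alpha_1(g)$, so you must add a term such as $\alpha_1(g)\,\mu_{i(1)}(g)$ to land in $\Prob(\Gamma)$; since $\alpha_1\in I_0(X)$, this correction vanishes along $X$ and is harmless.

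The genuine gap is in $(iii)\Rightarrow(i)$. Your two inequalities for $\nu_n(g)=\sum_h\mu(g)(h)\,\zeta_n(h)$ are correct, but the descent step fails. The coordinate functions $g\mapsto\nu_n(g)(\gamma)$ do define elements of $C(X)^{\Gamma_r}$, yet the resulting map from $\tilde X$ into the positive part of the unit ball of $\ell^1(\Gamma)$ is only sub-probability valued, and in the typical situation it is identically zero: the measures $\mu(g)$ escape to infinity as $g\to\omega\in X$ (think of a free group with $\mu(g)$ supported on the geodesic from $e$ to $g$), and if $\zeta_n(h)$ is supported near $h$ then $\nu_n(g)(\gamma)=\sum_h\mu(g)(h)\zeta_n(h)(\gamma)\to 0$ for every fixed $\gamma$. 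One cannot ``correct the mass defect'' of the zero measure, and convolving with the property A kernels $\zeta_n$ does nothing to prevent this escape, so the sentence ``exactness of $\Gamma$ is exactly what is needed to reabsorb this escaping mass'' is a hope, not an argument. The paper's route is genuinely different at precisely this point and is designed to dodge the escape of mass: it forms the \emph{unital} positive map $\mu_*:\ell^\infty(\Gamma)\to\ell^\infty(\Gamma)$, $\mu_*(f)(g)=\int f\,d\mu(g)$, and composes with restriction to obtain a unital, positive, left-equivariant map $\psi:\ell^\infty(\Gamma)\to C(X)^{\Gamma_r}$. Unitality is the whole point: dualizing $\psi$ gives a weak-$*$ continuous equivariant map from $\tilde X$ into the state space of $\ell^\infty(\Gamma)$, i.e.\ into $\Prob(\Delta\Gamma)$, where mass that escapes in $\Gamma$ is retained as mass on the corona rather than lost. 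Exactness of $\Gamma$ says precisely that $\Gamma\actson\Delta\Gamma$ is amenable, and amenability then transfers through this map to $\Gamma\actson\tilde X$ exactly as in Lemma \ref{transfer}, which is $(i)$. If you wish to keep your convolution picture, the repair is to take the boundary limits of the $\nu_n(g)$ as measures on $\Delta\Gamma$ rather than on $\Gamma$; but then the kernels $\zeta_n$ do no work and you have reproduced the paper's argument.
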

\begin{proof}
$(i) \Rightarrow (ii)$. This follows from Lemma \ref{all proximal}. 

$(ii) \Rightarrow (iii)$. Assuming that condition $(ii)$ holds, we will first prove, for each finite subset $E \subset \Gamma$ and each $\eps > 0$, the existence of a map $\mu: \Gamma \to \Prob(\Gamma)$ satisfying the following conditions:

\begin{enumerate}[(a)]
\item $\sup_{g \in \Gamma} \Vert \mu(sg) - s \cdot \mu(g)\Vert_{1} < \eps$, for all $s \in E$;
\item For all $t \in E$, the set $\{ g \in \Gamma \, \vert \, \Vert\mu(gt) - \mu(g)\Vert_1 \geq \eps\}$ is small relative to $X := \partial_\eta\Gamma$, in the sense that its closure inside the Stone-\v{C}ech compactification $\Delta\Gamma$ does not intersect $X$.
\end{enumerate}

Fix a finite set $E \subset \Gamma$ and $\eps > 0$.
Consider maps $P_n: K \to \Prob(\Gamma)$ as in Definition \ref{amenable} and define for all $n$, $\mu_n: \Gamma \to \Prob(\Gamma)$ by the formula $\mu_n(g) := \int_K P_n(gx) d\eta$. By definition of the net $P_n$, $\mu_n$ satisfies (a) for $n$ large enough. Let us check that $\mu_n$ satisfies condition (b) for every $n$.

Fix $n$. Since $K$ is compact and $P_n$ is continuous, we may find a finite set $F$ such that 
\[\sup_{x \in K} \Vert P_n(x) - P_n(x)_{|F} \Vert_1 < \eps/3.\]
Here we wrote $P_n(x)_{|F}$ to denote the restriction of the measure $P_n(x)$ to the set $F$. Note that we may view any measure on $F$ as an element in a finite dimensional vector space, namely the dual of $\C^F$. 

Fix $t \in \Gamma$ and denote by $A_t := \{g \in \Gamma \, \vert \, \Vert\mu_n(gt) - \mu_n(g)\Vert_1 \geq \eps\}$. Assume by contradiction that $X \cap \overline{A_t} \subset \Delta\Gamma$ is non-empty. Then it contains an element $\omega$.
By the triangle inequality, we have for all $g \in A_t$,
\begin{equation}\label{At}\left\Vert \int_K P_n(gtx)_{|F}d\eta - \int_K P_n(gx)_{|F}d\eta \right\Vert_{1} \geq \eps/3.\end{equation}
Now, since the function $f: x \in K \mapsto P_n(x)_{|F} \in (\C^F)^*$ is a continuous function into a finite dimensional vector space, and since $\lim_{g \to \omega} (gt)\eta - g\eta = 0$ (weakly), we have the convergence $\lim_{g \to \omega} \Vert \int_K f d(gt\eta) - \int_K f d(g\eta)\Vert_1 = 0$. This clearly contradicts \eqref{At}

Having established the existence of the map $\mu$, we may now apply, {\it mutatis mutandis}, the procedure described in \cite[Exercise 15.1.1]{BO08} to deduce $(iii)$.

$(iii) \Rightarrow (i)$. Fix $\mu$ as in $(iii)$ and consider the map $\mu_*: \ell^\infty(\Gamma) \to \ell^\infty(\Gamma)$ defined by $\mu_*(f): g \mapsto \int_\Gamma fd\mu(g)$. From the properties of $\mu$ it follows that the composition $\psi := \phi \circ \mu_*$ of $\mu_*$ with the restriction map $\phi: \ell^\infty(\Gamma) \to C(X)$ is a positive map which satisfies $\psi(s \cdot f \cdot t) = s \cdot \psi(f)$. In particular, $\psi$ ranges into $C(X)^{\Gamma_r}$ and it is left equivariant. Since $\Gamma$ is exact, the action $\Gamma \actson \ell^\infty(\Gamma)$ is amenable, and so is the left action $\Gamma \actson C(X)^{\Gamma_r}$.
\end{proof}

We also mention the following generalization of \cite[Lemma 15.1.4]{BO08}, which relates bi-exactness to property AO. The proof is the same.

\begin{lem}\label{ucp lift}
Consider an exact group $\Gamma$ and a boundary piece $X \subset \partial \Gamma$. Then $\Gamma$ is bi-exact towards $X$ if and only if there exists a u.c.p. map 
\[\theta: C^*_\lambda(\Gamma) \ot_{\min} C^*_\rho(\Gamma) \to B(\ell^2\Gamma)\]
such that $\theta(x \ot y) - xy \in \K(\Gamma;X)$ (see Definition \ref{compact}) for all $x \in C^*_\lambda(\Gamma)$, $y \in C^*_\rho(\Gamma)$.
\end{lem}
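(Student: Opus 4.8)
The statement is an equivalence, so I would prove the two implications separately. The substantive content is the construction of the map $\theta$ out of the dynamical amenability encoded by bi-exactness towards $X$; the converse reverses this construction using the exactness of $\Gamma$. Throughout I would lean on the equivalence already established in Theorem \ref{bi-exact equivalence}: bi-exactness towards $X$ is the same as the existence of a map $\mu:\Gamma\to\Prob(\Gamma)$ with $\lim_{g\to\omega}\|\mu(sgt)-s\cdot\mu(g)\|_1=0$ for all $s,t\in\Gamma$ and all $\omega\in X$ (an amenable action on a compact space forces $\Gamma$ to be exact, so the hypotheses match).

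For the forward implication (bi-exactness towards $X$ produces $\theta$), start from such a $\mu$ and set $\xi(g):=\mu(g)^{1/2}\in\ell^2\Gamma$, a family of unit vectors. The Powers--St\o rmer inequality $\|\mu^{1/2}-\nu^{1/2}\|_2^2\le\|\mu-\nu\|_1$ converts the $\ell^1$-estimates on $\mu$ into $\ell^2$-estimates, giving $\|\lambda_s\xi(g)-\xi(sgt^{-1})\|_2\to 0$ as $g\to\omega\in X$. Define the isometry $V:\ell^2\Gamma\to\ell^2\Gamma\ot\ell^2\Gamma$ by $V\delta_g=\xi(g)\ot\delta_g$, and let $\pi$ be the representation determined by $\pi(\lambda_s\ot\rho_t)=\lambda_s\ot\lambda_s\rho_t$. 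This $\pi$ is genuinely defined on the minimal tensor product: by Fell's absorption principle the unitary $W(\delta_a\ot\delta_c)=\delta_a\ot\delta_{a^{-1}c}$ conjugates $\pi$ to the standard faithful representation $\lambda_s\ot\rho_t\mapsto\lambda_s\ot\rho_t$. Hence $\theta:=V^*\pi(\cdot)V$ is a unital completely positive map $C^*_\lambda(\Gamma)\ot_{\min}C^*_\rho(\Gamma)\to B(\ell^2\Gamma)$. A direct computation gives $\theta(\lambda_s\ot\rho_t)\delta_g=\langle\lambda_s\xi(g),\xi(sgt^{-1})\rangle\,\delta_{sgt^{-1}}$, so that $\theta(\lambda_s\ot\rho_t)-\lambda_s\rho_t$ is the operator $\delta_g\mapsto c(g)\delta_{sgt^{-1}}$ with $c(g)=\langle\lambda_s\xi(g),\xi(sgt^{-1})\rangle-1$. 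Since $|c(g)|\le\|\lambda_s\xi(g)-\xi(sgt^{-1})\|_2\to 0$ as $g\to\omega\in X$, the set $\{g:|c(g)|\ge\eps\}$ is small relative to $X$, and a short argument with Definition \ref{mass} shows this error lies in $\K(\Gamma;X)$. Finally, the relation $\theta(x\ot y)-xy\in\K(\Gamma;X)$ extends from the generators to all $x\in C^*_\lambda(\Gamma)$, $y\in C^*_\rho(\Gamma)$ by separate norm-continuity (using that $\theta$ is contractive and $\K(\Gamma;X)$ is norm-closed), fixing $y=\rho_t$ and varying $x$, then fixing $x$ and varying $y$.

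For the converse, assume $\theta$ exists and $\Gamma$ is exact, and aim to show that the left $\Gamma$-action on the Gelfand spectrum of $C(X)^{\Gamma_r}$ is amenable. The matrix coefficients $(g,h)\mapsto\langle\theta(\lambda_s\ot\rho_t)\delta_g,\delta_h\rangle$ play the role of $\mu$ in reverse: the relation $\theta(\lambda_s\ot\rho_t)\equiv\lambda_s\rho_t$ modulo $\K(\Gamma;X)$ says precisely that these coefficients concentrate on the graph $g\mapsto sgt^{-1}$ with weight tending to $1$ as $g\to X$, which is the defining approximate right-invariance towards $X$. Using exactness of $\Gamma$, equivalently the amenability of $\Gamma\actson\Delta\Gamma$, which supplies finitely supported $\nu_n:\Gamma\to\Prob(\Gamma)$ that are asymptotically left-equivariant, I would average the data read off from $\theta$ against the $\nu_n$ to produce honest probability-measure-valued maps $\Gamma\to\Prob(\Gamma)$ obeying both the left-equivariance and the right-invariance towards $X$ required by Theorem \ref{bi-exact equivalence}. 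This follows \cite[Section 15]{BO08} verbatim and yields bi-exactness towards $X$.

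The main obstacle is the tensor-norm bookkeeping in the forward direction: a priori the commuting pair $(\lambda,\rho)$ only yields a representation of the maximal tensor product, and it is exactly the passage to the minimal tensor product that carries the dynamical information. The clean resolution is the Fell-absorption identification above, which realises $\pi$ as a conjugate of the standard faithful representation of $C^*_\lambda(\Gamma)\ot_{\min}C^*_\rho(\Gamma)$ and thereby makes $\theta$ automatically completely positive on the minimal tensor product, avoiding the Choi--Effros lifting theorem. In the converse the delicate point is dual: turning the abstract completely positive lift $\theta$ back into genuine equivariant $\Prob(\Gamma)$-valued maps, where exactness of $\Gamma$ is used to repair the lack of exact equivariance. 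One minor extra point, should one prefer to work with a net version of $\mu$ rather than the single map of Theorem \ref{bi-exact equivalence}, is to take a point-weak$^*$ limit of the resulting $\theta_n$ along an ultrafilter and check the limit still satisfies the compactness-towards-$X$ condition.
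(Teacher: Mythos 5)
Your forward implication is correct and is essentially the paper's own argument (the paper defers to \cite[Lemma 15.1.4]{BO08}, whose proof is exactly this): extract $\mu$ from Theorem \ref{bi-exact equivalence}(iii), pass to $\xi(g)=\mu(g)^{1/2}$ via Powers--St{\o}rmer, build the isometry $V$, get minimal-tensor-norm continuity of $\pi$ from Fell absorption, observe that the error at $(\lambda_s,\rho_t)$ is $\lambda_s\rho_t$ times multiplication by a function vanishing on $X$, hence lies in $\K(\Gamma;X)$, and extend by linearity, contractivity and density.

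The converse, however, has a genuine gap. The only property of $\theta$ that your sketch ever uses is that the matrix coefficients of each individual operator $\theta(\lambda_s\otimes\rho_t)$ concentrate on the graph $\{(g,sgt^{-1})\}$ with weight tending to $1$ as $g\to X$. But the operators $\lambda_s\rho_t$ themselves have exactly this property, with weight identically $1$, for every group and every boundary piece; so this data carries no information at all. Since every subsequent step of your argument (the averaging against the maps $\nu_n$ supplied by exactness) manipulates only this data, the argument would run unchanged with $\lambda_s\rho_t$ in place of $\theta(\lambda_s\otimes\rho_t)$ and would therefore prove that every exact group is bi-exact towards every boundary piece. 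This is false: $\Gamma=\F_2\times\F_2$ is exact, but it is not bi-exact towards $X=\partial\Gamma$ (that would make $L(\F_2\times\F_2)\cong L\F_2\ovt L\F_2$ solid by Ozawa's theorem, and the relative commutant of the diffuse subalgebra $L\F_2\otimes 1$ is $1\otimes L\F_2$, which is not amenable).

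What any correct proof of this direction must use, and yours never does, is that $\theta$ is completely positive on the \emph{minimal} tensor product. Via Stinespring, this says that modulo $\K(\Gamma;X)$ the commuting pair $(\lambda,\rho)$ is simultaneously implemented by a $\Gamma\times\Gamma$-representation weakly contained in the bi-regular representation; equivalently, one has the joint norm control $\mathrm{dist}\bigl(\sum_i\lambda_{s_i}\rho_{t_i},\,\K(\Gamma;X)\bigr)\le\bigl\|\sum_i\lambda_{s_i}\otimes\rho_{t_i}\bigr\|_{\mathrm{min}}$ across several pairs $(s_i,t_i)$ at once, not coefficient asymptotics of one operator at a time. (For the maximal tensor norm the corresponding map exists trivially, namely multiplication itself, which is another way to see that single-operator data cannot suffice.) Converting this weak containment into honest probability measures $\mu(g)\in\Prob(\Gamma)$ as in Theorem \ref{bi-exact equivalence}(iii) is the hard half of the lemma, and it is also where your positivity problem sits: the off-diagonal coefficients $\langle\theta(\lambda_s\otimes\rho_t)\delta_g,\delta_h\rangle$ are not positive and do not organize into elements of $\Prob(\Gamma)$, so there is no measure-valued ``data read off from $\theta$'' to feed into the \cite[Exercise 15.1.1]{BO08}-type patching, which takes probability-measure-valued, right-invariant-towards-$X$ input. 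The paper's intended proof (that of \cite[Lemma 15.1.4]{BO08}) handles this implication through property (AO) and weak-containment/lifting arguments, not by the elementary averaging you describe.
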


As a corollary, we deduce the following solidity type result.

\begin{thm}\label{relatively solid}
Consider a group $\Gamma$ with a boundary piece $X \subset \partial\Gamma$. Assume that $\Gamma$ is bi-exact towards $X$.

For any net of unitaries $(u_n) \subset \cU(L\Gamma)$ with positive mass on $X$ (viewed as a net in $\ell^2\Gamma$), the relative commutant $(u_n)' \cap L\Gamma$ has an amenable direct summand.
\end{thm}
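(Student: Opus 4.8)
The plan is to manufacture an $N$-hypertrace out of the net $(u_n)$ together with the bi-exactness data, where $N := (u_n)' \cap L\Gamma$. Write $M = L\Gamma$, represented on $L^2M = \ell^2\Gamma$ in standard form with modular conjugation $J$ (so $Ju_gJ = \rho_g$), and recall the criterion I intend to verify: $N$ admits a nonzero amenable direct summand as soon as there is a state $\Phi$ on $B(L^2M)$ which is $\Ad(u)$-invariant for every $u \in \cU(N)$ and whose restriction to $N$ is a nonzero normal positive functional; the central support $z \in \cZ(N)$ of $\Phi_{|N}$ then gives an amenable corner $Nz$. So the whole task is to produce such a $\Phi$.

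First I would fix a free ultrafilter and set $\Phi_0(T) = \lim_n \langle T\widehat{u_n}, \widehat{u_n}\rangle$, a state on $B(L^2M)$. The only input used so far is that each $u \in \cU(N)$ commutes with every $u_n$, whence $(uJuJ)\widehat{u_n} = \widehat{u u_n u^*} = \widehat{u_n}$ and therefore $\Phi_0 \circ \Ad(uJuJ) = \Phi_0$ for all $u \in \cU(N)$. To convert the \emph{positive mass} hypothesis into normality I would compress by the boundary projection $q_X \in \ell^\infty(\Gamma)^{**}$ of Notation \ref{nota1}, setting $\Phi := \Phi_0^{**}(q_X \,\cdot\, q_X)$. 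Since $q_X$ commutes with $\lambda(\Gamma)$ and $\rho(\Gamma)$, it commutes with $M$, with $JMJ$, and hence with every $uJuJ$, so $\Phi$ remains $\Ad(uJuJ)$-invariant; moreover $a \mapsto \Phi(\lambda(a)) = \Phi_0^{**}(q_X\lambda(a)q_X)$ is normal on $N$ by weak-$*$ continuity of $\Phi_0^{**}$ and of two-sided multiplication by $q_X$ in the bidual. Choosing the approximate unit $(e_U)$ of $I_0(X)$ so that each $e_U$ vanishes on a neighbourhood $W_U \downarrow X$ of $X$, positive mass gives
\[ \Phi(1) = \Phi_0^{**}(q_X) = \inf_U \Phi_0(1-e_U) \geq \inf_U \lim_n \|P_{W_U}\widehat{u_n}\|^2 \geq \eps^2 > 0, \]
so $\Phi_{|N}$ is a nonzero normal functional. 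Finally $\Phi$ vanishes on $\K(\Gamma;X)$: for such $T$ one has $\|T(1-e_U)\| \to 0$ exactly as in Lemma \ref{au}, hence $Tq_X = 0$ in the bidual and $\Phi(T^*T) = 0$.

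It remains to upgrade the invariance $\Phi \circ \Ad(uJuJ) = \Phi$ to the invariance $\Phi \circ \Ad(u) = \Phi$ demanded by the criterion; equivalently, since $\Ad(uJuJ) = \Ad(u)\circ\Ad(JuJ)$, it suffices to prove
\[ \Phi \circ \Ad(JuJ) = \Phi \qquad \text{for all } u \in \cU(N). \]
This is where bi-exactness towards $X$ enters, and I expect it to be the main obstacle. As $\Phi$ kills $\K(\Gamma;X)$, it descends to a state $\dot\Phi$ on $B(\ell^2\Gamma)/\K(\Gamma;X)$; by Lemma \ref{ucp lift} the multiplication map $x \ot y \mapsto xy + \K(\Gamma;X)$ is a well-defined $*$-homomorphism $m \colon C^*_\lambda(\Gamma) \ot_{\min} C^*_\rho(\Gamma) \to B(\ell^2\Gamma)/\K(\Gamma;X)$, so $\omega := \dot\Phi \circ m$ is a state with $\omega(x \ot y) = \Phi(xy)$. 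Approximating $u \in \cU(N) \subset M$ weakly by elements of $C^*_\lambda(\Gamma)$ (and $JuJ$ by the corresponding elements of $C^*_\rho(\Gamma)$), the relation $\Phi\circ\Ad(uJuJ) = \Phi$ translates into invariance of $\omega$ under the diagonal conjugation $\Ad(u) \ot \Ad_J(u)$ on the two legs. The amenability of the left $\Gamma$-action on the spectrum of $C(X)^{\Gamma_r}$ (Theorem \ref{bi-exact equivalence}), realised through the asymptotically invariant means coming from the maps $\mu\colon \Gamma \to \Prob(\Gamma)$ of Theorem \ref{bi-exact equivalence}(iii), is precisely what should let one average the right-leg conjugation away and conclude that $\omega$, and hence $\Phi$, is invariant under $\Ad(JuJ)$ separately. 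The delicate point I would expect to cost the most work is carrying out this averaging through the min-tensor/quotient identifications while controlling the passage from the C$^*$-algebras $C^*_\lambda(\Gamma), C^*_\rho(\Gamma)$ to the weak closures containing $N$ and $JNJ$; this mirrors the corresponding argument in \cite[Section 15]{BO08}.

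Granting this, $\Phi$ is a state on $B(L^2M)$ that is $\Ad(u)$-invariant for every $u \in \cU(N)$ and normal and nonzero on $N$. Taking $z \in \cZ(N)$ to be the central support of $\Phi_{|N}$, the compression of $\Phi$ yields an $Nz$-hypertrace, so that $Nz$ is amenable; this is the desired nonzero amenable direct summand of $N = (u_n)' \cap L\Gamma$.
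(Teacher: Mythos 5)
Your first half follows the paper's opening moves (the limit state from $(\hat u_n)$, compression by $q_X$, vanishing on $\K(\Gamma;X)$), but two of its justifications are faulty. First, the canonical embedding $\iota\colon B(\ell^2\Gamma)\to B(\ell^2\Gamma)^{**}$ is \emph{not} normal, so ``weak-$*$ continuity of $\Phi_0^{**}$'' proves nothing about normality of $\Phi_{|N}$; this one is repairable, since $q_X\,\iota(a)\,q_X\leq\iota(a)$ for $a\geq 0$ gives $\Phi_{|M}\leq\tau$, hence normality by domination. Second, and less innocently, $q_X$ is only known to commute with $\iota(C^*_\lambda(\Gamma))$ and $\iota(C^*_\rho(\Gamma))$: the relations $\lambda_gq_X\lambda_g^*=\rho_gq_X\rho_g^*=q_X$ pass to norm limits but not through $\iota$ to the weak closures, and a unitary $u\in\cU(N)$ of the relative commutant has no reason to lie in $C^*_\lambda(\Gamma)$. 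So even the $\Ad(uJuJ)$-invariance of your compressed state $\Phi$ is unjustified as written (the paper sidesteps this by never conjugating the compressed state; it only uses $\varphi(uJuJ)=1$, which forces $\varphi(uJuJ\,S)=\varphi(S\,uJuJ)=\varphi(S)$ by Cauchy--Schwarz). A smaller slip: $\K(\Gamma;X)$ is a hereditary subalgebra of $B(\ell^2\Gamma)$, not an ideal, so ``$B(\ell^2\Gamma)/\K(\Gamma;X)$'' is not a C$^*$-algebra and your map $m$ is not a $*$-homomorphism; the correct device is the u.c.p.\ lift of Lemma \ref{ucp lift}, or one works inside $A_\Gamma$, where $I(X)=A_\Gamma\cap\K(\Gamma;X)$ is an ideal by Lemma \ref{au}.

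The fatal gap is the step you defer: proving $\Phi\circ\Ad(JuJ)=\Phi$ for all $u\in\cU(N)$. This is not a technicality to be smoothed over by averaging --- it is essentially the theorem itself, and your proposed mechanism cannot deliver it: the maps $\mu\colon\Gamma\to\Prob(\Gamma)$ of Theorem \ref{bi-exact equivalence}(iii) are indexed by group elements, so they can only average over translates by $\Gamma$; they give no handle whatsoever on conjugation by an arbitrary unitary of the von Neumann subalgebra $N$. Moreover there is no reason the \emph{particular} state $\Phi$ should be $JNJ$-central. The paper's proof never upgrades the invariance of $\Phi$; instead it manufactures a \emph{new} state in the tensor-product picture: bi-exactness (Lemma \ref{ucp lift}) makes $x\ot y\mapsto\psi(xy)$ continuous on $C^*_\lambda(\Gamma)\ot_{\mini}C^*_\rho(\Gamma)$; exactness of $\Gamma$ (Property C) extends this to a state $\tpsi$ on $L\Gamma\ot_{\mini}R\Gamma$ that is normal on each leg; the relation $\varphi(uJuJ)=1$ becomes $\tpsi(u\ot\bar u)=1$; Glimm's lemma then produces vectors $\xi_n\in\ell^2\Gamma\ot\ell^2\Gamma$ with $\lim_n\Vert(u\ot 1)\xi_n-(1\ot u^{\op})\xi_n\Vert=0$, and the state $\Omega(T):=\lim_n\langle (T\ot 1)\xi_n,\xi_n\rangle$ is $N$-central because $T\ot 1$ commutes \emph{exactly} with $1\ot u^{\op}$, while $\Omega_{|L\Gamma}=\tpsi(\cdot\ot 1)$ is normal. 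That transfer-and-rebuild step (C$^*$-level continuity $\Rightarrow$ Property C extension $\Rightarrow$ Glimm $\Rightarrow$ new central state) is the missing idea; without it, or an equivalent substitute, your argument stops exactly where the proof has to begin.
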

\begin{proof}
Assume that $\Gamma$ is bi-exact relative to $X$. Pick a net of unitaries $(u_n)_n \in L\Gamma$ with positive mass on $X$ and denote by $Q$ its relative commutant, $Q = (u_n)' \cap L\Gamma$. Define a state $\varphi: B(\ell^2\Gamma) \to \C$ by the formula $\varphi(T) = \lim_n \langle T \hat u_n, \hat u_n\rangle$, for all $T \in B(\ell^2\Gamma)$. Note that $\varphi(uJuJ) = 1$ for all $u \in \cU(Q)$, and $\varphi$ is tracial on $L\Gamma$ and $R\Gamma$.

Recall the notation $q_X = 1_X$ from \ref{nota1} and the notation $I(X)\subset A_\Gamma$ from subsection \ref{compactt}. We may extend $\varphi$ to a normal state on $B(H)^{**}$ so that $\varphi(q_X)$ makes sense. By assumption $\varphi(q_X) \neq 0$. Hence we may define a state $\psi: A_\Gamma \to \C$ by the formula $\psi(a) = \varphi(q_Xa)/\varphi(q_X)$ for all $a \in A_\Gamma$. 

By Lemma \ref{ucp lift}, since $\Gamma$ is bi-exact towards $X$, the map $x \ot y \in C^*_\lambda(\Gamma) \ot C^*_\rho(\Gamma) \mapsto q(xy) \in A_\Gamma / I(X)$ is min-continuous (we implicitly used that $I(X)=A_\Gamma\cap \K(\Gamma;X)$ by  Lemma \ref{au}). Here $q$ denotes the quotient map $A_\Gamma \to A_\Gamma / I(X)$. Since $\psi$ vanishes on $I(X)$, the following formula defines a continuous state:
\[\tpsi: x \ot y \in C^*_\lambda(\Gamma) \ot_\mini C^*_\rho(\Gamma) \mapsto \psi(xy) \in \C.\]
Note that $\tpsi$ is subtracial on $C^*_\lambda(\Gamma)$ and on $C^*_\rho(\Gamma)$.

Since $\Gamma$ is exact, $C^*_\lambda(\Gamma)$ has Property C from \cite[Section 9]{BO08}. Hence we have an embedding 
\[{C^*_\lambda(\Gamma)}^{**} \ot_{\mini} {C^*_\rho(\Gamma)}^{**} \subset { \left( C^*_\lambda(\Gamma) \ot_\mini C^*_\rho(\Gamma)\right)}^{**}.\]
In particular we get that the state $\tpsi$ extends to a state on $L\Gamma \ot_{\mini} R\Gamma$ which is normal on $L\Gamma$ and $R\Gamma$. Moreover, since the initial state $\varphi$ satisfies $\varphi(uJuJ) = 1$ for all $u \in \cU(Q)$, we find that $\tpsi(u \ot \bar{u}) = 1$ for all $u \in \cU(Q)$. Here $\bar{u}$ means ${u^*}^{\op}$, where we identified $R\Gamma$ with the opposite algebra of $L\Gamma$.

Representing $L\Gamma \ot_{\mini} R\Gamma$ standardly on $\ell^2\Gamma \ot \ell^2\Gamma$, we may approximate $\tpsi$ by vector states (by Glimm's lemma) : $\tpsi(x) = \lim_n \langle x \xi_n, \xi_n \rangle$, $x \in L\Gamma \ot_{\mini} R\Gamma$, for some unit vectors $\xi_n \in \ell^2(\Gamma) \ot \ell^2(\Gamma)$. These vectors satisfy $\lim_n \langle (u \ot \bar{u})\xi_n,\xi_n\rangle = 1$, which implies that $\lim_n \Vert (u \ot 1)\xi_n - (1 \ot u^{op}\xi_n\Vert = 0$. Then we see that the state $\Omega : B(\ell^2(\Gamma)) \to \C$ defined by $\Omega(T) = \lim_n \langle T \ot 1 \xi_n, \xi_n\rangle$ is normal on $L\Gamma$ and $Q$-central, proving that $Q$ has an amenable direct summand.
\end{proof}

%%%%%
\subsection{Patching bi-exactness}

Let us conclude this section with a discussion about patching arguments.
At a first glance one may be tempted to proceed as for proper proximality and define the class of ``patched bi-exact groups'' of groups whose Stone-\v
{C}ech boundary can be covered with finitely many pieces $X_i$ such that $\Gamma$ is bi-exact towards each $X_i$. We show here that such a notion is in fact equivalent to genuine bi-exactness. 

This relies on the following generalization of a result of Popa and Vaes, \cite[Proposition 2.7]{PV12b}. The proof is exactly the same.

\begin{prop}
Consider a discrete group $\Gamma$ and a boundary piece $X \subset \partial \Gamma$. The following are equivalent.
\begin{enumerate}[(i)]
\item There exists a map $\mu: \Gamma \to \Prob(\Gamma)$ as in Theorem \ref{bi-exact equivalence}.$(iii)$ ;
\item There exists a two-sided array into the regular representation $b : \Gamma \to \ell^2(\Gamma)$ which is proper towards $X$, meaning that the corresponding boundary piece introduced in Lemma \ref{array piece} contains $X$;
\item There exists a  unitary representation $\rho:\Gamma\rightarrow\mathcal U(K)$, weakly contained in the regular representation of $\Gamma$, and an array $q : \Gamma \to K$ which is proper towards $X$. 
\end{enumerate}
\end{prop}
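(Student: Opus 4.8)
The plan is to prove the two equivalences $(i)\Leftrightarrow(ii)$ and $(ii)\Leftrightarrow(iii)$, following \cite[Proposition 2.7]{PV12b} verbatim. Three devices drive the argument. The first is the Mazur map $\xi\mapsto|\xi|^2$ together with the Powers--St\o rmer estimates
\[\|\,|\xi|^2-|\eta|^2\,\|_1\le\|\xi-\eta\|_2\,(\|\xi\|_2+\|\eta\|_2),\qquad \|\xi-\eta\|_2^2\le\|\,|\xi|^2-|\eta|^2\,\|_1\ \ (\xi,\eta\ge 0),\]
which let one pass between elements of $\Prob(\Gamma)\subset\ell^1(\Gamma)$ and positive unit vectors of $\ell^2(\Gamma)$, converting $\ell^1$-equivariance into $\ell^2$-equivariance and back (note $s\cdot|\xi|^2=|\lambda_s\xi|^2$). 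The second is weak containment in $\lambda$, used to replace an abstract representation $\rho$ by the regular one at the level of coefficients on finite sets. The third is a telescoping construction turning a bounded, merely asymptotically equivariant family into a genuinely proper (hence unbounded) array.

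The two directions that go from an array to the measure-valued map are the clean ones. For $(ii)\Rightarrow(i)$: given a two-sided array $b\colon\Gamma\to\ell^2(\Gamma)$ proper towards $X$, the normalized vectors $\xi_g:=|b(g)|/\|b(g)\|$ (defined for $g$ near $X$, where $\|b(g)\|\to\infty$) satisfy $\lim_{g\to\omega}\|\lambda_s\xi_g-\xi_{sgt}\|_2=0$ for all $s,t$ and all $\omega\in X$, exactly as in the proof of Lemma \ref{array piece}, since the array defect stays bounded while the norms blow up; setting $\mu(g):=\xi_g^2\in\Prob(\Gamma)$ and invoking the upper Powers--St\o rmer inequality gives $\|\mu(sgt)-s\cdot\mu(g)\|_1\le 2\|\xi_{sgt}-\lambda_s\xi_g\|_2\to0$, which is $(i)$. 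The implication $(ii)\Rightarrow(iii)$ is immediate, with $\rho=\lambda$, $K=\ell^2(\Gamma)$ and $q=b$.

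The reverse passages, from bounded data to an unbounded array, are where the real work lies. For $(i)\Rightarrow(ii)$, put $f_g:=\sqrt{\mu(g)}$, a positive unit vector of $\ell^2(\Gamma)$; the lower Powers--St\o rmer inequality gives $\|\lambda_s f_g-f_{sgt}\|_2^2\le\|s\cdot\mu(g)-\mu(sgt)\|_1\to0$ as $g\to\omega\in X$, so we again have asymptotically equivariant unit vectors, and these must now be assembled into a proper array by telescoping: one stacks the $f_g$ in a growing number $N(g)$ of copies inside a multiple $\bigoplus_n\ell^2(\Gamma)$ of the regular representation, choosing the depth $N(g)\to\infty$ towards $X$ slowly enough that for each fixed $(s,t)$ the quantity $N(g)\,\|\lambda_s f_g-f_{sgt}\|_2^2$ stays bounded, so that $\|b(g)\|^2\sim N(g)\to\infty$ while the two-sided defect $\sum_n\|f^{(n)}_{sgt}-\lambda_s f^{(n)}_g\|_2^2$ remains uniformly bounded. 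For $(iii)\Rightarrow(ii)$ one performs the same telescoping starting from the normalized vectors $\xi_g\in K$ of a proper array into $\rho$, except that at each level weak containment $\rho\prec\lambda$ is used to realize the relevant finite-rank coefficient data of $\xi_g$ by vectors of $\ell^2(\Gamma)$, the coherence between consecutive levels keeping the cross terms under control; the outcome is a proper two-sided array into the regular representation.

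The main obstacle is the telescoping itself. Because properness forces the norms to diverge while equivariance only holds asymptotically, no single normalized family can serve directly; one must interleave countably many copies and tie the growth of the number of effective copies to the (non-uniform, $(s,t)$-dependent) convergence rates of the defects, so that the translation defect stays bounded along every fixed direction yet the total norm escapes to infinity on $X$. The subsidiary difficulty, in $(iii)\Rightarrow(ii)$, is to carry out the weak-containment replacement level by level in a way that is compatible with this telescoping rather than chosen independently for each $g$. Both points are handled exactly as in the proof of \cite[Proposition 2.7]{PV12b}.
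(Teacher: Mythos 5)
Your high-level route coincides with the paper's: the paper's entire proof of this proposition is the single assertion that the argument of \cite[Proposition 2.7]{PV12b} carries over verbatim to boundary pieces, and the ingredients you list (the Mazur map with Powers--St\o rmer estimates, weak containment, and a telescoping construction) are exactly the ingredients of that argument. Your treatment of $(ii)\Rightarrow(i)$ and $(ii)\Rightarrow(iii)$ is correct and complete as sketched.

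However, in the one place where you commit to a concrete mechanism --- the telescoping in $(i)\Rightarrow(ii)$, and likewise in $(iii)\Rightarrow(ii)$ --- the condition you impose is insufficient, and the construction as literally described would fail. If $b(g)$ consists of $N(g)$ stacked copies of the unit vector $f_g$, then for fixed $s,t$ one has
\[
\bigl\Vert b(sgt)-(\oplus_n\lambda_s)\,b(g)\bigr\Vert^2=\min\bigl(N(g),N(sgt)\bigr)\,\Vert f_{sgt}-\lambda_s f_g\Vert^2+\bigl\vert N(sgt)-N(g)\bigr\vert ,
\]
because every level present in one of $b(g)$, $b(sgt)$ but not in the other contributes a full unit vector. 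Your requirement that $N(g)\Vert \lambda_s f_g-f_{sgt}\Vert^2$ stay bounded controls only the first term and puts no constraint whatsoever on the mismatch term $\vert N(sgt)-N(g)\vert$; in the extreme case where the defects $\Vert\lambda_s f_g-f_{sgt}\Vert$ vanish identically, your condition is vacuous and permits a wildly oscillating depth function $N$, which destroys the array property. The essential missing idea --- and the actual crux of the Popa--Vaes argument --- is that $N$ must itself vary slowly: $\vert N(sgt)-N(g)\vert\leq C(s,t)$ uniformly in $g$. This is arranged by setting $N(g)=\#\{n \mid g\notin T_n\}$, where the cut-off sets $T_n$ are nested, exhaust $\Gamma$, have closures in $\Delta\Gamma$ disjoint from $X$, absorb the defect sets $\{g \mid \max_{s,t\in F_n}\Vert\mu(sgt)-s\cdot\mu(g)\Vert_1>4^{-n}\}$ (whose closures miss $X$ by hypothesis), and crucially satisfy the recursion $T_{n+1}\supset F_nT_nF_n$ for an increasing symmetric exhaustion $(F_n)$ of $\Gamma$ --- a choice that is possible precisely because $X$ is invariant under both translation actions, so that smallness towards $X$ is preserved by left/right translations and finite unions. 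That recursion is what bounds the mismatch (it gives $g\notin T_{n+1}\Rightarrow sgt\notin T_n$ for $s,t\in F_n$, hence $\vert N(sgt)-N(g)\vert\leq C(s,t)$), and no choice of ``$N(g)\to\infty$ slowly enough'' in your sense can replace it. The same correction is needed in $(iii)\Rightarrow(ii)$; alternatively, the cleaner standard routing there is $(iii)\Rightarrow(i)$, using $\rho\otimes\bar\rho\prec\infty\cdot\lambda$ and Powers--St\o rmer to produce the measure map, and then $(i)\Rightarrow(ii)$. Since your write-up states an incorrect sufficient condition at this point and otherwise defers to \cite{PV12b}, the proposal as written has a genuine gap at the heart of the hard implications.
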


\begin{prop}\label{bi ex patching}
Consider a discrete group $\Gamma$ and finitely many boundary pieces $X_i \subset \partial \Gamma$, $i = 1,\dots,n$. Assume that for each $i$, $\Gamma$ is bi-exact towards $X_i$. Then $\Gamma$ is bi-exact towards $\bigcup_i X_i$. 
\end{prop}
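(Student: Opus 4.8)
The plan is to avoid manipulating the transfer maps $\mu_i\colon\Gamma\to\Prob(\Gamma)$ coming from Theorem~\ref{bi-exact equivalence}$(iii)$ directly, since the obvious attempt fails: a convex combination such as $\frac1n\sum_i\mu_i$ does not control directions, because for $\omega\in X_j$ the summands $\mu_i$ with $i\neq j$ contribute errors $\mu_i(sgt)-s\cdot\mu_i(g)$ that need not vanish as $g\to\omega$. Instead I would pass to the representation-theoretic reformulation of bi-exactness towards a piece supplied by the proposition immediately preceding, because there the relevant data is \emph{additive} and the ``proper towards'' property is monotone under direct sums.

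First, $\Gamma$ is exact: bi-exactness towards $X_1$ already entails exactness, by the equivalence of $(i)$ and $(iii)$ in Theorem~\ref{bi-exact equivalence}. Next, for each $i$ I would apply the previous proposition (its equivalence $(i)\Leftrightarrow(ii)$) to produce a two-sided array $b_i\colon\Gamma\to\ell^2(\Gamma)$ into the left regular representation $\lambda$ which is proper towards $X_i$; by Lemma~\ref{array piece} this means the associated boundary piece $\{\omega\mid \lim_{g\to\omega}\Vert b_i(g)\Vert=+\infty\}$ contains $X_i$.

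Then I would form the direct sum $b:=\bigoplus_{i=1}^n b_i\colon\Gamma\to\bigoplus_{i=1}^n\ell^2(\Gamma)$, viewed as an array into $\rho:=\lambda^{\oplus n}$, which is weakly contained in the regular representation. The array axiom for $b$ holds coordinatewise, since $\Vert b(sgt)-\rho(s)b(g)\Vert^2=\sum_{i=1}^n\Vert b_i(sgt)-\lambda(s)b_i(g)\Vert^2$ is a finite sum of quantities bounded in $g$. Moreover, for $\omega\in X_i$ one has $\Vert b(g)\Vert^2\geq\Vert b_i(g)\Vert^2$, so $\lim_{g\to\omega}\Vert b(g)\Vert=+\infty$; hence the boundary piece of $b$ contains every $X_i$ and therefore contains $\bigcup_i X_i$. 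In other words, $b$ is proper towards $\bigcup_i X_i$.

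Finally I would apply the previous proposition in the direction $(iii)\Rightarrow(i)$ to the array $b$ into $\rho$ (weakly contained in $\lambda$), obtaining a map $\mu\colon\Gamma\to\Prob(\Gamma)$ as in Theorem~\ref{bi-exact equivalence}$(iii)$ relative to $\bigcup_i X_i$. Combined with exactness of $\Gamma$, Theorem~\ref{bi-exact equivalence} then yields that $\Gamma$ is bi-exact towards $\bigcup_i X_i$. The only real point is the initial choice of invariant to patch: once one works with arrays (equivalently, representations weakly contained in $\lambda$) rather than with the $\mu_i$ themselves, the rest is the elementary observation that properness towards a union is witnessed by the direct sum of the individual witnesses; all remaining verifications are routine.
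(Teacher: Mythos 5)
Your proposal is correct and follows essentially the same route as the paper: the paper's proof likewise converts each hypothesis into a two-sided array $q_i\colon\Gamma\to\ell^2(\Gamma)$ proper towards $X_i$ via the proposition preceding Proposition~\ref{bi ex patching}, and observes that the direct sum $q:=\oplus_i q_i$ is an array proper towards $\bigcup_i X_i$. Your write-up merely makes explicit what the paper leaves implicit, namely that the direct sum lands in $\lambda^{\oplus n}$ (weakly contained in, but not equal to, the regular representation), so that one must invoke clause $(iii)$ rather than $(ii)$ of that proposition to return to bi-exactness via Theorem~\ref{bi-exact equivalence}.
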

\begin{proof}
By assumption, we have for each $i$ an array $q_i: \Gamma \to \ell^2(\Gamma)$ which is proper towards $X_i$. Then the direct sum $q: \Gamma \to \oplus_i \ell^2(\Gamma)$ defined by $q(g) := \oplus_i q_i(g)$, $g \in \Gamma$, is an array which is clearly proper towards $\bigcup_i X_i$.
\end{proof}

This allows us to provide some new perspective on the following result of Ozawa, \cite{Oz09}.

\begin{cor}[Ozawa]
The group $\Gamma := \Z^2 \rtimes \SL_2(\Z)$ is bi-exact. 
\end{cor}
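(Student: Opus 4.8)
The plan is to realize $\partial\Gamma$ as a union of two proximal boundary pieces towards each of which $\Gamma$ is bi-exact, and then invoke Proposition \ref{bi ex patching}. Throughout, write $q\colon\Gamma\to\SL_2(\Z)$ for the quotient map with kernel $N=\Z^2$, and extend it to $q\colon\Delta\Gamma\to\Delta\SL_2(\Z)$. Since $\Delta\SL_2(\Z)=\SL_2(\Z)\sqcup\partial\SL_2(\Z)$, every $\omega\in\partial\Gamma$ satisfies either $q(\omega)\in\partial\SL_2(\Z)$ (the ``$\SL_2$-directions'') or $q(\omega)\in\SL_2(\Z)$ (the ``$N$-directions''); I will cover these two families by proximal pieces $Y_1$ and $Y_2$ respectively. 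Exactness of $\Gamma$ will be automatic, since each piece is produced from a continuous amenable action on a compact space.

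For $Y_1$ I would use the boundary action $\SL_2(\Z)\actson\bP^1(\R)$, pulled back to an action $\Gamma\actson\bP^1(\R)$ through $q$. As $\SL_2(\Z)$ is a lattice in $\SL_2(\R)$ this action is amenable, and amenability is preserved under pullback along $q$ because $N=\ker q$ is amenable. Fix any diffuse $\eta_1\in\Prob(\bP^1)$ (e.g. the Lebesgue class). Since $\SL_2(\Z)\actson\bP^1$ is a convergence action, the reasoning of Example \ref{convergence} gives $\partial_{\eta_1}\SL_2(\Z)=\partial\SL_2(\Z)$, and a computation as in Proposition \ref{stability}$(1)$ shows $Y_1:=\partial_{\eta_1}\Gamma\supseteq q^{-1}(\partial\SL_2(\Z))$. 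By Lemma \ref{first facts} $Y_1$ is a boundary piece, and Theorem \ref{bi-exact equivalence}$(ii)\Rightarrow(i)$ shows $\Gamma$ is bi-exact towards $Y_1$.

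The heart of the argument is the piece $Y_2$ covering the $N$-directions, where the $\SL_2$-coordinate stays bounded while the $\Z^2$-coordinate escapes. Here I would use the affine embedding
\[\Gamma\hookrightarrow\SL_3(\Z),\qquad (n,\gamma)\mapsto\begin{pmatrix}\gamma&n\\0&1\end{pmatrix},\]
and let $\Gamma$ act on the full flag variety $\mathcal F=\SL_3(\R)/B$. Since the minimal parabolic $B$ is amenable, $\SL_3(\R)\actson\mathcal F$ is amenable, hence so is the restricted action of the closed subgroup $\Gamma$. It remains to choose $\eta\in\Prob(\mathcal F)$ with $Y_2:=\partial_\eta\Gamma\supseteq\{\omega: q(\omega)\in\SL_2(\Z)\}$. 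I would take $\eta$ supported on the flags $(\R e_3\subset \R e_3\oplus\R(w',0))$, $[w']\in\bP^1(\R)$, with $[w']$ distributed according to a diffuse measure $\mu$. Given $\omega$ in the $N$-directions, choose a net $g_k=(n_k,\gamma_k)\to\omega$ along which $\gamma_k$ is eventually a fixed $\gamma$ and $n_k/\|n_k\|\to\theta\in\bP^1$. Then $g_k(\R e_3)=\R(n_k,1)\to[\theta]$ for every flag in the support, while $g_k(\R e_3\oplus\R(w',0))\to\R(\theta,0)\oplus\R(\gamma w',0)=\R^2\times\{0\}$ for all $[w']$ outside the single $\mu$-null direction $[w']=[\gamma^{-1}\theta]$. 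Hence $g_k\cdot\eta\to\delta_F$, where $F=([\theta]\subset\R^2\times\{0\})$. For any fixed $h=(m,\beta)$ the net $g_kh$ has the same escaping direction $\theta$ and bounded $\SL_2$-part $\gamma\beta$, so the identical computation gives $g_kh\cdot\eta\to\delta_F$ as well; thus $g_kh\cdot\eta-g_k\cdot\eta\to0$ and $\omega\in\partial_\eta\Gamma$. The use of a \emph{diffuse} $\mu$ in the plane coordinate is exactly what absorbs the degenerate directions (including the rational $\theta$), so that a single flag action suffices. Again by Lemma \ref{first facts} and Theorem \ref{bi-exact equivalence}, $\Gamma$ is bi-exact towards $Y_2$.

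Finally, since $Y_1\cup Y_2=\partial\Gamma$, Proposition \ref{bi ex patching} shows that $\Gamma$ is bi-exact towards $\partial\Gamma$; combined with exactness this is bi-exactness of $\Gamma$ by the characterization in Example \ref{bi-exact example}. \textbf{The main obstacle} is the $Y_2$ computation: the linear $\SL_2(\Z)$-action on $\Z^2$ is not isometric, so a naive length function is not a two-sided array, and it is precisely the passage to the full flag variety — together with the diffuse plane coordinate that kills the degenerate limits — that converts the escaping $\Z^2$-directions into genuine proximal behaviour while retaining amenability of the action. Verifying amenability of $\Gamma\actson\SL_3(\R)/B$ and the convergence $g_k\cdot\eta\to\delta_F$ uniformly over $h$ are the two points requiring care.
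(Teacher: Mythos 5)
Your proposal is correct and follows essentially the same route as the paper: the same decomposition of $\partial\Gamma$ into the $\SL_2$-directions and the $\Z^2$-directions, the same amenable actions (the projective action on $\bP^1$ pulled back through the quotient, and the action on the full flag variety of $\SL_3(\R)$ via the affine embedding), and the same patching via Proposition \ref{bi ex patching}. The only real difference is in one verification: where the paper checks $X_2\subset\partial_\eta\Gamma$ by citing the singular-value description of the boundary pieces from Section \ref{section SL} (the $K$-invariant measure on the flag variety and the argument of Corollary \ref{SLd solid}), you verify proximality directly with an explicit measure supported on the flags $\left(\R e_3\subset\R e_3\oplus\R(w',0)\right)$, which is a valid, self-contained substitute for that step.
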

 \begin{proof}
Denote by $\Lambda := \SL_2(\Z)$ and by $\pi: \Delta\Gamma \to \Delta \Lambda$ the continuous extension of the projection map $\Gamma \to \Lambda$. We define 
\[X_1 := \pi^{-1}(\partial\Lambda) \text{ and } X_2 := \partial\Gamma \cap \pi^{-1}(\Lambda),\]
so that $\partial\Gamma = X_1 \cup X_2$.

First, note that the action $\sigma: \Lambda \actson \bP^1$ by homography is topologically amenable and it is a convergence action in the sense of Example \ref{convergence}. Hence the $\Gamma$-action obtained by composing $\sigma$ with the quotient map $\Gamma \to \Lambda$ is still amenable (because $\Z^2$ is amenable) and for any diffuse measure $\eta_1 \in \Prob(\bP^1)$, $X_1 = \partial_{\eta_1}\Gamma$. Hence $\Gamma$ is bi-exact towards $X_1$ by Theorem \ref{bi-exact equivalence}.

Second, embed $\Gamma$ inside $G := \SL_3(\R)$ in the usual way, and denote by $P <G$ the subgroup of upper triangular matrices, so that $G/P$ is the flag variety. Then $\Gamma \actson G/P$ is amenable and there exists $\eta \in \Prob(G/P)$ such that $X_2 \subset \partial_\eta\Gamma$, see the proof of Corollary \ref{SLd solid} below. Hence $\Gamma$ is bi-exact towards $\overline{X_2}$, by Theorem \ref{bi-exact equivalence}.

Proposition \ref{bi ex patching} then shows that $\Gamma$ is bi-exact.
\end{proof}

%%%%%%%%%%
\section{Application to the von Neumann algebras of $\SL_d(\Z)$}
\label{section SL}

In this section, we present a series of applications to the study of the von Neumann algebras of $\SL_d(\Z)$.
Let us start by specializing Lemma \ref{simple case} to the case of $\SL_d(Z)$.

From now on we fix $d \geq 3$ and set $\Gamma = \SL_d(\Z)$.

%%%%%
\subsection{Description of the canonical boundary pieces}

Denote by $G := \SL_d(\R)$ and let $K := \SO(d)$ be its maximal compact subgroup.

For all tuple $\bar k = (k_1,\dots, k_l)$ of integers $0 < k_1 < k_2 < \cdots < k_l < d$ denote by $P_{\bar{k}} < G$ the parabolic subgroup which stabilizes the subspaces $\mathbb R^{k_j} \times \{ 0 \}$, i.e., $P_{\bar{k}}$ consists of all matrices in $SL_d(\mathbb R)$ of the form 
\[
\begin{bmatrix}
GL_{k_1 - k_0}(\mathbb R) & * & * & \cdots & * \\
0 & GL_{k_2 - k_1}(\mathbb R) & * & \cdots & * \\
\vdots & \vdots & \vdots & \ddots & \vdots \\
0 & 0 & 0 & \cdots & GL_{k_l - k_{l - 1}} (\mathbb R)
\end{bmatrix}.
\]
Set $K_{\bar k} := G/P_{\bar k}$ and denote by $\eta_{\bar k} \in \Prob(K_{\bar k})$ the unique $K$-invariant probability measure on $K_{\bar k}$. The fact that it is $K$-invariant is not relevant to us, but it implies that $\eta_{\bar k}(Y) = 0$ for every proper algebraic subvariety of $K_{\bar k}$. In a sense this condition is similar to the diffuseness condition appearing in Example \ref{convergence}, and allows to avoid the unstable subvarieties that appear from the dynamics $\Gamma \actson K_{\bar k}$.

For each such tuple $\bar k$, denote by $X_{\bar k} := \partial_{\eta_{\bar k}}\Gamma$ the corresponding boundary piece. 

Given $g \in G$, recall that its singular values $s_i(g)$, $1 \leq i \leq n$ are the eigenvalues of $\sqrt{g^t g}$. We order them in such a way that $s_1(g) \geq s_2(g) \geq \dots \geq s_d(g) > 0$. These are easily seen to be the diagonal values of $G$ in the Cartan decomposition (also called $KAK$-decomposition).

The following lemma describes the boundary pieces $X_{\bar k}$ in terms of the ratios between singular values of group elements. It essentially goes back to Furstenberg's proof of Borel's density theorem \cite{Fu76}.

\begin{lem}
Fix a tuple $\bar k = (k_1,\dots, k_l)$. Then for any neighborhood $U \subset \Delta\Gamma$ of $X_{\bar k}$, there exists $C > 0$ such that
\[\{ g \in \Gamma \mid \frac{s_{k_j}(g)}{s_{k_{j+1}}(g)} \geq C \text{ for all } j \} \subset U.\]
\end{lem}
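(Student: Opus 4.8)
The plan is to argue by contradiction, packaging the statement into a net/compactness argument and then reducing the essential content to the concentration phenomenon already isolated in the proof of Lemma~\ref{simple case}. Fix the tuple $\bar k = (k_1,\dots,k_l)$ and a neighborhood $U$ of $X_{\bar k}$, and suppose the conclusion fails. Then for each $n$ there is $g_n \in \Gamma$ with $s_{k_j}(g_n)/s_{k_j+1}(g_n) \geq n$ for all $j$, yet $g_n \notin U$. Since $\Delta\Gamma \setminus U$ is closed in the compact space $\Delta\Gamma$, after passing to a subnet I may assume $g_n \to \omega$ for some $\omega \in \Delta\Gamma \setminus U$. As $X_{\bar k} \subset U$, it then suffices to show that $\omega \in X_{\bar k} = \partial_{\eta_{\bar k}}\Gamma$, which is the desired contradiction. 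Recall from Lemma~\ref{first facts} that $\partial_{\eta_{\bar k}}\Gamma = \bigcap_{h \in \Gamma} \{\omega : \sigma(\omega h) = \sigma(\omega)\}$, where $\sigma : \Delta\Gamma \to \Prob(K_{\bar k})$ is the continuous extension of $g \mapsto g\cdot\eta_{\bar k}$; so I only have to check $\sigma(\omega h) = \sigma(\omega)$ for every $h \in \Gamma$.

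Next I would put each $g_n$ in Cartan ($KAK$) form $g_n = u_n a_n v_n$ with $u_n, v_n \in \SO(d)$ and $a_n = \diag(s_1(g_n),\dots,s_d(g_n))$; since $\SO(d)$ is compact I pass to a further subnet so that $u_n \to u$ and $v_n \to v$. Because the singular values are ordered decreasingly, $a_n$ lies in the positive Weyl chamber $S^+$, so $\alpha_m(a_n) = s_m(g_n)/s_{m+1}(g_n) \geq 1$ for every simple root $\alpha_m$, while the hypothesis forces $\alpha_{k_j}(a_n) \to \infty$ for the simple roots $\alpha_{k_1},\dots,\alpha_{k_l}$ whose omission defines $P_{\bar k}$. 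Consequently every root occurring in the unipotent radical of the opposite parabolic of $P_{\bar k}$ is contracted by $\Ad(a_n)$ (each such root has a strictly positive coefficient on some $\alpha_{k_j}$, and all the other $\alpha_m(a_n)$ are $\geq 1$), which is exactly the input of Claim~1 in the proof of Lemma~\ref{simple case}. Hence the concentration conclusion of Claim~2 there applies here; the one point to verify is that the $\SO(d)$-invariant measure $\eta_{\bar k}$ may replace the measure used in that proof, and this holds because $\eta_{\bar k}$ gives zero mass to every proper algebraic subvariety of $K_{\bar k}$ (as noted when it was introduced), in particular to the unstable subvariety of flags not in general position with respect to the contracting directions of $a_n$ — precisely the non-degeneracy that Furstenberg's argument exploits.

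I would then compute both limits with a single formula. For $\sigma(\omega)$: since $v_n \in \SO(d)$ fixes $\eta_{\bar k}$, one has $g_n \cdot \eta_{\bar k} = u_n a_n \cdot \eta_{\bar k}$, and the concentration statement gives $g_n\cdot\eta_{\bar k} \to \delta_{u\cdot x_0}$, where $x_0 = eP_{\bar k}$ is the standard flag; by continuity of $\sigma$, this is $\sigma(\omega) = \delta_{u\cdot x_0}$. For $\sigma(\omega h)$, fix $h \in \Gamma$ and write $g_n h = u_n\, a_n\, (v_n h)$: the left factors $u_n \to u$ and the right factors $v_n h \to vh$ both converge in $G = \SL_d(\R)$, so the same concentration applies and yields $(g_n h)\cdot\eta_{\bar k} \to \delta_{u\cdot x_0}$, the limit depending only on the left factor $u$. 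Since $g_n h \to \omega h$ and $\sigma$ is continuous, $\sigma(\omega h) = \delta_{u\cdot x_0} = \sigma(\omega)$ for all $h$, whence $\omega \in X_{\bar k} \subset U$, contradicting $\omega \notin U$. The main obstacle is thus entirely concentrated in the second paragraph: verifying that the $\SO(d)$-invariant measure $\eta_{\bar k}$ satisfies the zero-mass-on-proper-subvarieties condition needed for the Claim~2 concentration, and observing that this non-degeneracy is preserved after translating $\eta_{\bar k}$ by the fixed, convergent right factors $v_n h$ (which act as algebraic automorphisms of $K_{\bar k}$); the reduction via nets and the Cartan decomposition is routine.
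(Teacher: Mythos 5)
Your proof is correct, but it is routed differently than the paper's. The paper's argument stays entirely inside $\SL_d(\R)$ and is deliberately self-contained: it fixes a concrete set $U_0 \subset U$ cut out by finitely many test functions $f \in C(K_{\bar k})$, finitely many $h \in \Gamma$ and an $\eps>0$, writes $g_n = a_n \diag(s_1(g_n),\dots,s_d(g_n))b_n$ with $a_n \to a$, $b_n \to b$ in $\SO(d)$, and proves the concentration $g_n g\,\eta_{\bar k} \to \delta_{aF}$ by direct linear algebra on Grassmannians: any $W \in {\rm Gr}(k_j,d)$ transverse to $\{0\}\times\R^{d-k_j}$ is pushed to $\R^{k_j}\times\{0\}$, and the non-transverse $W$ form a $\pi_{k_j}\eta_{\bar k}$-null set. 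You instead pass to a cluster point $\omega \in \Delta\Gamma\setminus U$, show $\omega \in \partial_{\eta_{\bar k}}\Gamma$ via continuity of $\sigma$ (which, incidentally, makes explicit the compactness argument the paper compresses into the unexplained choice of $U_0$), and obtain the concentration by re-running Claims 1 and 2 from the proof of Lemma \ref{simple case}. Two points genuinely needed checking there, and you address both: (i) those claims are stated only for the maximal parabolics $P_i$ with a single diverging simple root, so they cannot be cited as black boxes; your observation that every root of the opposite unipotent radical of $P_{\bar k}$ carries a nonzero coefficient at some $\alpha_{k_j}$, hence is contracted once all $\alpha_{k_j}(a_n)\to\infty$, is exactly the required generalization of Claim 1; (ii) Claim 2 is proved for the pushforward of a Haar-equivalent measure on $V^-$, not for the $K$-invariant measure, and what its proof actually uses is that every $G$-translate of the complement of the open cell is null --- which holds for $\eta_{\bar k}$ precisely because it vanishes on proper algebraic subvarieties, as you say. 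As for what each route buys: the paper's proof is elementary and independent of the algebraic-group machinery (it is advertised in Section 5 as ``a direct argument in the special case of $\SL_d(\R)$''), while yours is shorter modulo Lemma \ref{simple case} and exhibits the statement as a special case of the general local-field argument, valid for an arbitrary standard parabolic of an almost simple group.
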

\begin{proof}
Fix a neighborhood $U$ of $X_{\bar k}$ inside the Stone-C\v{e}ch compactification of $\Gamma$. By definition of $X_{\bar k} = \partial_{\eta_{\bar k}}\Gamma$, we may choose a finite set $\cF \in C(K_{\bar k})$, a finite subset $E \subset \Gamma$ and $\eps > 0$ such that 
\[U_0 := \left\{ g \in \Gamma \, \mid \, \max_{f \in \cF, h \in E} \left\vert \int f d(g\eta_{\bar k}) - \int f d(gh\eta_{\bar k}) \right\vert \leq \eps\right\} \subset U.\]
If the lemma does not hold for this choice of $U$, then we may find a sequence $(g_n)$ in $\Gamma$ such that $g_n \notin U$ for all $n$ but $\lim_n s_{k_j}(g_n)/s_{k_{j+1}}(g_n) = +\infty$ for all $j$. For each $n$, we may write 
\[g_n = a_n\diag(s_1(g_n), \dots , s_d(g_n)) b_n, \text{ with } a_n ,b_n \in K.\]
Since $K$ is compact we may replace $g_n$ by a subsequence and assume that $(a_n)$ and $(b_n)$ converge to elements $a$ and $b$ respectively.

We let $F(k_1, k_2, \ldots, k_l)$ denote the flag variety consisting of flags with signature $(k_1, k_2, \ldots, k_l)$, and we let ${\rm Gr}(k_j, d)$ denote the Grassmannian of $k_j$-dimensional subspaces in $\mathbb R^d$.  We denote by $\pi_{k_j}$ the projection map from a flag of signature $(k_1, k_2, \ldots, k_l)$ onto its $k_j$-dimensional subspace. 

Since $\eta_{\bar k}$ is the $K$-invariant measure it follows that given a subspace $V \subset \mathbb R^d$ with $\dim V = d - k_j$, we have $\pi_{k_j} \eta_{\bar k} (\{ W \in {\rm Gr}(k_j, d) \mid W \cap V \not= \{ 0 \} \}) = 0$. On the other hand, if $W \in {\rm Gr}(k_j, d)$ is such that $W \cap ( \{ 0 \} \times \mathbb R^{d - k_j} ) = \{ 0 \}$ then we have that 
\[ a^{-1}g_n b^{-1}W = a^{-1} a_n \diag(s_1(g_n), \dots , s_d(g_n)) b_n b^{-1} W \to \mathbb R^{k_j} \times \{ 0 \}.\]
It therefore follows that for all $g \in G$ we have $ a^{-1}g_n b^{-1} g\pi_{k_j}\eta_{\bar k} \to \delta_{ \mathbb R^{k_j} \times \{ 0 \} }$. Hence, for all $g \in G$ we have that $g_n g \eta_{\bar k} \to \delta_{a F}$ where $F$ is the standard flag of signature $(k_1, k_2, \ldots, k_l)$, so that $g_n \in U_0$ for $n$ large enough, which contradicts our assumption.
\end{proof}

When the tuple $\bar{k}$ is a $1$-tuple $\bar k = (i)$, we just write $X_i$ instead of $X_{(i)}$.

\begin{cor}
For all $\omega \in \partial\Gamma$ and $1 \leq i \leq n$, if $\lim_{g \to \omega} s_i(g)/s_{i+1}(g) = +\infty$, then $\omega \in X_i$.
So the sets $X_i$ cover the Stone-C\v{e}ch boundary: $\partial\Gamma = \bigcup_{i = 1}^{d-1} X_{i}$.
\end{cor}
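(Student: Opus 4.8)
The first assertion is essentially a reformulation of the preceding lemma (applied to the $1$-tuple $\bar k = (i)$) in the language of ultrafilters. The plan is to fix $\omega \in \partial\Gamma$ with $\lim_{g \to \omega} s_i(g)/s_{i+1}(g) = +\infty$ and to argue by contradiction, assuming $\omega \notin X_i$. Since $\Delta\Gamma$ is compact Hausdorff, hence normal, I can separate the closed set $X_i$ from the point $\omega$ by an open neighbourhood $U \supseteq X_i$ with $\omega \notin \overline{U}$. The lemma then produces a constant $C > 0$ with $A_C := \{g \in \Gamma \mid s_i(g)/s_{i+1}(g) \geq C\} \subseteq U$. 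The hypothesis $\lim_{g \to \omega} s_i(g)/s_{i+1}(g) = +\infty$ says precisely that $A_C$ belongs to the ultrafilter $\omega$, which means $\omega \in \overline{A_C}$ (closure taken in $\Delta\Gamma$). But then $\omega \in \overline{A_C} \subseteq \overline{U}$, contradicting $\omega \notin \overline{U}$; hence $\omega \in X_i$.

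For the covering statement, the inclusion $\bigcup_{i} X_i \subseteq \partial\Gamma$ holds because no measure $\eta_{(i)}$ is $\Gamma$-invariant (there is no $\Gamma$-invariant probability measure on $K_{(i)}$), so $X_i = \partial_{\eta_{(i)}}\Gamma \subseteq \partial\Gamma$ by Lemma \ref{first facts}(3). For the reverse inclusion I would fix $\omega \in \partial\Gamma$, that is, a free ultrafilter on $\Gamma$. Since $\SL_d(\Z)$ is discrete in $\SL_d(\R)$, every operator-norm ball meets $\Gamma$ in a finite set, so $\{g \mid s_1(g) < C\}$ is finite for each $C$; as $\omega$ is free it contains every cofinite set, whence $\lim_{g \to \omega} s_1(g) = +\infty$. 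Now $\det g = 1$ forces $s_1(g)\cdots s_d(g) = 1$, and since the singular values are ordered decreasingly we have $s_1(g) \geq 1 \geq s_d(g)$; consequently
\[ \prod_{j=1}^{d-1} \frac{s_j(g)}{s_{j+1}(g)} = \frac{s_1(g)}{s_d(g)} \geq s_1(g) \xrightarrow[g \to \omega]{} +\infty. \]
In particular $\max_{1 \leq j \leq d-1} s_j(g)/s_{j+1}(g) \geq s_1(g)^{1/(d-1)}$ also tends to $+\infty$ along $\omega$.

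It remains to pin down a single index valid along the whole ultrafilter, which is where the primeness of $\omega$ enters. I would partition $\Gamma$ into the $d-1$ sets $\Gamma_j := \{g \in \Gamma \mid j \text{ is the least index realizing } \max_{k} s_k(g)/s_{k+1}(g)\}$. As $\omega$ is an ultrafilter and the partition is finite, exactly one piece $\Gamma_{i_0}$ lies in $\omega$. On $\Gamma_{i_0}$ one has $s_{i_0}(g)/s_{i_0+1}(g) = \max_{k} s_k(g)/s_{k+1}(g)$, and since both $\Gamma_{i_0}$ and $\{g \mid \max_k s_k(g)/s_{k+1}(g) \geq C\}$ lie in $\omega$ for every $C$, it follows that $\lim_{g \to \omega} s_{i_0}(g)/s_{i_0+1}(g) = +\infty$. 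The first assertion then gives $\omega \in X_{i_0} \subseteq \bigcup_i X_i$, completing the proof. The only genuinely delicate points are the passage between the neighbourhood formulation of the lemma and membership in $\omega$ (handled by normality of $\Delta\Gamma$ together with the clopen description $\overline{A} = \{\mathcal{U} \mid A \in \mathcal{U}\}$ of closures of subsets $A \subseteq \Gamma$) and this final finite-partition step; everything else is routine bookkeeping with singular values.
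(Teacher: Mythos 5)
Your proof is correct and follows essentially the same route as the paper: the first assertion is deduced from the preceding lemma (you spell out the Stone--\v{C}ech topology behind what the paper calls ``immediate''), and the covering statement rests on the same observation that $\det g = 1$ together with discreteness of $\Gamma$ forces some consecutive singular-value ratio to blow up along any free ultrafilter. Your explicit finite-partition step extracting a single index $i_0$ valid along all of $\omega$ is a detail the paper's proof leaves implicit, but it is the same underlying pigeonhole argument.
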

\begin{proof}
The first fact is an immediate consequence of the previous lemma. To deduce the second fact, we just need to observe that for every $\omega \in \partial\Gamma$, there exists an index $i$ such that $\lim_{g \to \omega} s_i(g)/s_{i+1}(g) = +\infty$. Indeed, since every element of $G$ has determinant $1$, for every $C > 0$, the set $\{ g \in G \mid s_i(g)/s_{i+1}(g) < C \text{ for all } i\}$ is bounded in $G$. Hence its intersection with $\Gamma$ is finite.
\end{proof}

In the special case of $\SL_3(\Z)$, we denote by $X_+ := X_{1}$, $X_- := X_{2}$ and by $X_0 := X_{(1,2)}$.

%%%%%
\subsection{Applications}

\begin{prop}\label{X_0 piece}
$\SL_3(\Z)$ is bi-exact towards $X_0$. 
\end{prop}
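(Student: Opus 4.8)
The plan is to invoke the characterization of bi-exactness towards a boundary piece provided by Theorem~\ref{bi-exact equivalence}, specifically the implication $(ii) \Rightarrow (i)$. Recall that $X_0 = X_{(1,2)} = \partial_{\eta_{(1,2)}}\Gamma$, where $K_{(1,2)} = G/P_{(1,2)}$ is the full flag variety of $G = \SL_3(\R)$ and $P_{(1,2)} < G$ is the stabilizer of the standard complete flag $\R^1 \subset \R^2 \subset \R^3$, i.e.\ the minimal parabolic (Borel) subgroup of upper-triangular matrices. Thus it suffices to exhibit an amenable action $\Gamma \actson K$ together with a measure $\eta \in \Prob(K)$ such that $X_0 \subset \partial_\eta\Gamma$; the natural candidate is simply $K = K_{(1,2)}$ with $\eta = \eta_{(1,2)}$, for which $\partial_\eta\Gamma = X_0$ holds by the very definition of $X_0$.

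The only substantive point is therefore to verify that the action $\Gamma \actson G/P_{(1,2)}$ is topologically amenable in the sense of Definition~\ref{amenable}. I would argue as follows. The minimal parabolic $P_{(1,2)}$ is a solvable, hence amenable, closed subgroup of $G$. By the standard characterization of amenable homogeneous actions---a continuous action $G \actson G/H$ of a locally compact group on the quotient by a closed subgroup $H$ is topologically amenable precisely when $H$ is amenable---it follows that the $G$-action on $G/P_{(1,2)}$ is topologically amenable. Since topological amenability of an action passes to closed subgroups, and $\Gamma = \SL_3(\Z)$ is discrete, hence closed, in $G$, the restricted action $\Gamma \actson G/P_{(1,2)}$ is topologically amenable as well.

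With this established, condition $(ii)$ of Theorem~\ref{bi-exact equivalence} is met with $K = K_{(1,2)}$, $\eta = \eta_{(1,2)}$ and $X = X_0$, and the equivalence $(ii) \Rightarrow (i)$ yields that $\Gamma$ is bi-exact towards $X_0$.

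I expect the only delicate ingredient to be the transfer of amenability from $G$ down to the discrete subgroup $\Gamma$: one must check that the notion relevant to $\Gamma$ in Definition~\ref{amenable} (a net of continuous maps $P_n \colon G/P_{(1,2)} \to \Prob(\Gamma)$ that are asymptotically equivariant) is genuinely produced from the amenability of the ambient action $G \actson G/P_{(1,2)}$. This is a routine but standard restriction argument; note also that it automatically re-establishes exactness of $\Gamma$, which is in any case immediate for the linear group $\SL_3(\Z)$.
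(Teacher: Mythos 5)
Your proposal is correct and follows essentially the same route as the paper: the paper's proof likewise observes that $P_{(1,2)}$ is the (solvable, hence amenable) upper-triangular subgroup, deduces that the action of $\SL_3(\Z)$ on $G/P_{(1,2)}$ is topologically amenable, and concludes via Theorem~\ref{bi-exact equivalence}. Your additional remarks on transferring amenability from $G$ to the discrete subgroup $\Gamma$ just make explicit a step the paper leaves implicit.
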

\begin{proof}
Since $P_0 := P_{(1,2)}$ is an amenable group (it is the upper triangular subgroup), the action $\Gamma \actson X_{0} = G/P_{0}$ is topologically amenable, and the result follows from Theorem \ref{bi-exact equivalence}.
\end{proof}

\begin{cor}\label{SLd solid}
Denote by $\Lambda$ either the top-left copy of $\SL_2(\Z)$ or the copy of $\Z^2$ inside $\Gamma := \SL_3(\Z)$:
\[\Lambda = \begin{pmatrix} *&*&0\\ *&*&0\\0&0&1\end{pmatrix} \simeq \SL_2(\Z) \quad \text{or} \quad \Lambda =  \begin{pmatrix} 1&0&*\\0&1&*\\0&0&1\end{pmatrix} \simeq \Z^2.\]
Then for any diffuse subalgebra $A$ of $L\Lambda$ the relative commutant $A' \cap L\Gamma$ is amenable.
\end{cor}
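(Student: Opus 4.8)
The plan is to locate the subgroup $\Lambda$ inside the boundary piece $X_0$ and then feed a suitable unitary net into the relative solidity machinery of Theorem~\ref{relatively solid}, exploiting the bi-exactness of $\Gamma$ towards $X_0$ from Proposition~\ref{X_0 piece}. The first and decisive step is a singular value computation. If $\Lambda\simeq\SL_2(\Z)$, then an element of $\Lambda$ has lower-right entry $1$ and an $\SL_2$-block in the upper-left corner, so its singular values are $s\geq 1\geq s^{-1}$ and hence $s_1/s_2=s_2/s_3=s$. If $\Lambda\simeq\Z^2$, a direct diagonalisation of $g^tg$ for $g=1+xE_{13}+yE_{23}$ shows that $1$ is always a singular value while the other two are $s,s^{-1}$ with $s\to\infty$ as $x^2+y^2\to\infty$, so again $s_1/s_2=s_2/s_3=s$. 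In both cases, every net $g_\alpha\in\Lambda$ leaving all finite sets satisfies $\lim s_1(g_\alpha)/s_2(g_\alpha)=\lim s_2(g_\alpha)/s_3(g_\alpha)=+\infty$.

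Using the lemma above describing $X_{(1,2)}=X_0$ through ratios of consecutive singular values, I would deduce that every $\omega\in\overline\Lambda\cap\partial\Gamma$ lies in $X_0$: such an $\omega$ is a limit of elements of $\Lambda$ along which both ratios blow up, so for every $C>0$ the set $\{g\mid s_1(g)/s_2(g)\geq C,\ s_2(g)/s_3(g)\geq C\}$ belongs to the ultrafilter $\omega$ and is contained in any prescribed neighbourhood of $X_0$, forcing $\omega\in X_0$. Thus $\overline\Lambda\cap\partial\Gamma\subseteq X_0$, and consequently, for every open neighbourhood $U$ of $X_0$ in $\Delta\Gamma$, the set $\Lambda\setminus U$ is a closed subset of the compact space $\Delta\Gamma$ disjoint from $\partial\Gamma$, hence a compact subset of the discrete set $\Lambda$, hence finite.

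Next I would produce the required unitary net. Since $A$ is diffuse it contains a Haar unitary $u$, with $u\in L\Lambda$, so every power $u^n$ is supported on $\Lambda$ and $u^n\to 0$ $\sigma$-weakly. For any neighbourhood $U$ of $X_0$ one then has $\Vert u^n-P_U u^n\Vert_2^2=\sum_{g\in\Lambda\setminus U}|\widehat{u^n}(g)|^2\to 0$, because $\Lambda\setminus U$ is finite and each Fourier coefficient tends to $0$; hence $(u^n)_n$ has \emph{full} mass on $X_0$. Because $\Gamma$ is bi-exact towards $X_0$, I would apply Theorem~\ref{relatively solid} to this net. Here full mass upgrades the conclusion from an amenable direct summand to full amenability: revisiting the proof, the state $\varphi(T)=\lim_n\langle T\widehat{u^n},\widehat{u^n}\rangle$ already restricts to the faithful trace on $L\Gamma$ since the $u^n$ are unitaries, while full mass gives $\varphi(p_{X_0})=0$, so $\varphi(q_{X_0})=1$ and the renormalised state coincides with $\varphi$ on $L\Gamma$. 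The resulting $\big((u^n)'\cap L\Gamma\big)$-central state is therefore normal and faithful on $L\Gamma$, which makes the entire relative commutant $(u^n)'\cap L\Gamma$ amenable. Since $A$ commutes with $u$, we have $A'\cap L\Gamma\subseteq\{u\}'\cap L\Gamma=(u^n)'\cap L\Gamma$, and amenability passes to this von Neumann subalgebra.

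The main obstacle is precisely this last upgrade: as stated, Theorem~\ref{relatively solid} yields only an amenable direct summand, so I must check that full mass on $X_0$ forces the central support of the constructed state to be $1$ (equivalently, that it restricts to the faithful trace on $L\Gamma$). I would emphasise that the naive alternative — taking a maximal central projection $z$ of $A'\cap L\Gamma$ with $(A'\cap L\Gamma)z$ amenable and running an exhaustion — is awkward here, because cutting by $1-z$, which need not lie in $L\Lambda$, destroys the containment of the Fourier support in $\Lambda$ that underlies the full-mass estimate. The full-mass route performs no cutting and is therefore the clean path to the statement.
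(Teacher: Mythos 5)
Your proof is correct and follows the paper's own route: the same singular-value computations (the $\SL_2(\Z)$ case being immediate, the $\Z^2$ case via the observation that $1$ is always a singular value) place all cluster points of $\Lambda$ in the boundary piece $X_0$, and the conclusion is then drawn from Proposition~\ref{X_0 piece} together with Theorem~\ref{relatively solid}. The one place where you go beyond the paper's terse write-up --- observing that a weakly-null sequence of unitaries taken from the diffuse algebra $A$ has \emph{full} (not merely positive) mass on $X_0$, so that in the proof of Theorem~\ref{relatively solid} one gets $\varphi(q_{X_0})=1$, the constructed $Q$-central state restricts to the faithful normal trace on $L\Gamma$, and hence $Q=(u^n)'\cap L\Gamma$ is amenable outright rather than merely having an amenable direct summand --- is precisely the detail the paper leaves implicit, and you handle it correctly.
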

\begin{proof}
This relies on Theorem \ref{relatively solid}. In both cases we just have to check that whenever $(g_n)_n$ is a sequence of elements in $\Lambda$ which goes to infinity, the two ratios $s_1(g_n)/s_2(g_n)$ and $s_2(g_n)/s_3(g_n)$ go to infinity. In both cases, we actually prove the stronger fact that $1$ is a singular value of every element of $\Lambda$.
In the first case, this is obvious. In the second case, take $u \in \Z^2$. We compute
\[\begin{pmatrix} I_2 & 0\\u^t&1\end{pmatrix}\begin{pmatrix} I_2 & u\\0&1\end{pmatrix} - I_3 = \begin{pmatrix} I_2 & u\\u^t&1+u^tu \end{pmatrix} - I_3 = \begin{pmatrix} 0_2 & u\\u^t& u^tu \end{pmatrix}.\]
This matrix has rank at most $2$, which proves the claim.
\end{proof}

\begin{rem}
In contrast with the above corollary, we cannot apply directly Proposition \ref{X_0 piece} to the subgroup $\Lambda \simeq \GL_2(\Z) \ltimes \Z^2$ embedded as follows
\[(A,u) \in \GL_2(\Z) \ltimes \Z^2 \mapsto \begin{pmatrix} A&u\\0&\det(A) \end{pmatrix}.\]
Indeed for every integer $n \geq 1$, define the element $g_n \in \Lambda_n$ as follows
\[g_n = \begin{pmatrix} n&n-1&n\\n+1&n&0\\0&0&1\end{pmatrix}.\]
Then $(g_n)_n$ is a sequence which goes to infinity inside $\Lambda$, but none of its cluster points in $\partial\Gamma$ belongs to $X_0$. This last fact comes from the fact that $(g_n/n)_n$ converges to a rank $2$ matrix (so the two top singular values are equivalent to $n$). Nevertheless, a similar solidity property is expected to hold true. 
\end{rem}

\begin{prop}
For all $d \geq 3$, consider inside $\Gamma_d := \PSL_d(\Z)$ the following subgroup $\Lambda_d$ isomorphic to $\Z^{d-1}$: 
\[\Lambda_d := \left\{\pm\begin{pmatrix} I_{d-1}&u\\0&1\end{pmatrix} \, , \, u \in \Z^{d - 1}\right\} \simeq \Z^{d - 1}\]
Then $L(\Lambda_d)$ is a maximal abelian subalgebra inside $L(\Gamma_d)$ and the inclusion $L(\Lambda_3) \subset L(\Gamma_3)$ is not isomorphic with the inclusion $L(\Lambda_d) \subset L(\Gamma_d)$ for any $d \geq 4$.
\end{prop}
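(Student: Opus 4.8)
The plan is to prove the two assertions separately: the maximal abelian statement by a direct computation of $\Lambda_d$-conjugacy orbits, and the non-isomorphism by isolating a relative-solidity property of the inclusion that is an isomorphism invariant, holds for $d=3$ by Corollary~\ref{SLd solid}, and fails for $d\ge 4$.

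\emph{The maximal abelian assertion.} Since $\Lambda_d\simeq\Z^{d-1}$ is abelian, $L\Lambda_d$ is abelian, so I only need $L\Lambda_d'\cap L\Gamma_d=L\Lambda_d$. I will use the standard criterion: for an abelian subgroup $\Lambda<\Gamma$ one has $L\Lambda'\cap L\Gamma=L\Lambda$ as soon as every $g\in\Gamma\setminus\Lambda$ has an infinite $\Lambda$-conjugacy orbit $\{hgh^{-1}:h\in\Lambda\}$. Indeed, an element of the relative commutant has $\ell^2$ Fourier coefficients constant on such orbits, so coefficients supported off $\Lambda$ must vanish; note also that $hgh^{-1}\in\Lambda$ forces $g\in\Lambda$, so each orbit lies entirely inside or outside $\Lambda$, and orbits inside $\Lambda$ are singletons. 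Writing $g=\begin{pmatrix}A&b\\ c&e\end{pmatrix}$ in $(d-1)+1$ block form and $h_v=\begin{pmatrix}I&v\\0&1\end{pmatrix}$, a short computation gives
\[ h_v g h_v^{-1}=\begin{pmatrix}A+vc & b+ev-(A+vc)v\\ c& e-cv\end{pmatrix}. \]
If $c\neq 0$ the top-left block $A+vc$ already takes infinitely many values as $v$ ranges over $\Z^{d-1}$, so the orbit is infinite. If $c=0$ the orbit equals $\{(A,\;b+(eI-A)v,\;e):v\in\Z^{d-1}\}$, which is finite iff $A=eI$; combined with $\det g=1$ this forces $g\in\Lambda_d$. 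Hence finite orbits occur only inside $\Lambda_d$, giving the MASA property for every $d\ge 3$ (the passage between $\SL$ and $\PSL$ is harmless since the center is finite).

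\emph{The invariant and the case $d=3$.} Consider the property $\mathcal P(B\subset N)$: \emph{every diffuse von Neumann subalgebra $A\subseteq B$ has amenable relative commutant $A'\cap N$}. This is visibly preserved by any isomorphism of inclusions, so it suffices to show that $\mathcal P$ holds for $L\Lambda_3\subset L\Gamma_3$ but fails for $L\Lambda_d\subset L\Gamma_d$ when $d\ge 4$. For $d=3$, the subgroup $\Lambda_3$ is exactly the copy of $\Z^2$ appearing in Corollary~\ref{SLd solid} (and $\PSL_3(\Z)=\SL_3(\Z)$), so that corollary yields $\mathcal P(L\Lambda_3\subset L\Gamma_3)$ directly.

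\emph{Failure for $d\ge 4$ and the main obstacle.} Here I will exhibit a diffuse $A\subset L\Lambda_d$ with non-amenable relative commutant. Let $\Lambda'<\Lambda_d$ be the infinite cyclic subgroup of elements $h_u$ with $u$ supported on the first coordinate, and set $A:=L\Lambda'\cong L(\Z)$, which is diffuse. The centralizer computation analogous to the one above shows that the block-diagonal copy $\{\diag(1,D,1):D\in\SL_{d-2}(\Z)\}$ centralizes $\Lambda'$, whence $L(\SL_{d-2}(\Z))\subseteq A'\cap L\Gamma_d$. Since $d-2\ge 2$, the group $\SL_{d-2}(\Z)$ is non-amenable, so $A'\cap L\Gamma_d$ contains a non-amenable subalgebra and is itself non-amenable; thus $\mathcal P$ fails for $d\ge 4$ and the two inclusions cannot be isomorphic. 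The computations themselves are routine; the conceptual crux is recognizing that relative solidity of the distinguished MASA is the correct isomorphism invariant, that its validity for $d=3$ is precisely the content of Corollary~\ref{SLd solid}, and that it breaks down for $d\ge 4$ because the last-column $\Z^{d-1}$ then contains sub-tori whose centralizer has room for an embedded higher-rank $\SL_{d-2}(\Z)$.
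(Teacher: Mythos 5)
Your proof is correct and follows essentially the same route as the paper: the maximal abelian part is the same $\Lambda_d$-conjugacy-orbit (relative ICC) computation, and the non-isomorphism part uses the same relative-solidity invariant, with $d=3$ handled by Corollary~\ref{SLd solid} and $d\ge 4$ ruled out by exhibiting a diffuse cyclic subgroup of $\Lambda_d$ centralized by a copy of $\SL_{d-2}(\Z)$. The only (immaterial) difference is that you take the sub-torus in the first coordinate, centralized by the middle block $\diag(1,D,1)$, whereas the paper uses $\Z e_{d-1}$, centralized by the top-left block $\begin{pmatrix} A&0\\0&I_2\end{pmatrix}$.
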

\begin{proof}
The first statement easily reduces to the lemma below which proves a relative ICC condition.
To prove the second part of the statement denote by $j: \Z^{d-1} \to \Lambda_d < \Gamma_d$ the described embedding and denote by $e_1,\dots,e_{d-1}$ the canonical basis of $\Z^{d-1}$. Then observe that for $d \geq 4$ the subgroup $j(\Z e_{d-1}) < \SL_d(\Z)$ is centralized by the (non-amenable) subgroup
\[\left\{\pm \begin{pmatrix} A&0\\0&I_2 \end{pmatrix} \, , \, A \in \SL_{d-2}(\Z)\right\}.\]
Hence this situation is distinct for the case of $d = 3$ in which Corollary \ref{SLd solid} applies.
\end{proof}

\begin{lem}
Fix $d \geq 3$ and denote by $\Gamma := \GL_d(\Z)$ and by $\Lambda \simeq \Z^{d - 1}$ the subgroup
\[\Lambda =  \left\{\begin{pmatrix} I_{d-1}&u\\0&1\end{pmatrix} \, , \, u \in \Z^{d - 1} \right\} \simeq \Z^{d - 1}.\]
Then for all $g \in \Gamma$ such that $\{sgs^{-1} \, , \, s \in \Lambda\}$ is finite we have either $ g \in \Lambda$, or $-g \in \Lambda$.
\end{lem}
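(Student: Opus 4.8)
The plan is to make the conjugation action completely explicit in block form and then read off the finiteness constraints one block at a time. First I would parametrize $\Lambda$ by writing $s_u = \begin{pmatrix} I_{d-1} & u \\ 0 & 1\end{pmatrix}$ for $u \in \Z^{d-1}$, so that $s_u^{-1} = s_{-u}$, and write an arbitrary $g \in \GL_d(\Z)$ in the corresponding block form $g = \begin{pmatrix} A & b \\ c & \delta\end{pmatrix}$ with $A \in M_{d-1}(\Z)$, $b$ a column, $c$ a row, and $\delta \in \Z$. A direct $2\times 2$ block multiplication then yields
\[
s_u g s_u^{-1} = \begin{pmatrix} A + uc & b + (\delta I_{d-1} - A)u - (cu)u \\ c & \delta - cu\end{pmatrix},
\]
where $cu$ is a scalar. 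Everything after this is extracting constraints from the finiteness of the orbit $\{s_u g s_u^{-1} \mid u \in \Z^{d-1}\}$, and the steps should be carried out in an order in which each conclusion simplifies the next.

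I would start with the bottom row $(c,\ \delta - cu)$. Since $c$ does not depend on $u$ while $\delta - cu = \delta - \sum_i c_iu_i$ runs through infinitely many integers as soon as $c \neq 0$, finiteness of the orbit forces $c = 0$. With $c = 0$ the top-left block collapses to the constant $A$ and the top-right block becomes $b + (\delta I_{d-1} - A)u$. Its taking only finitely many values as $u$ ranges over the \emph{entire} lattice $\Z^{d-1}$ forces the integer matrix $\delta I_{d-1} - A$ to annihilate $\Z^{d-1}$, hence $A = \delta I_{d-1}$.

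At this stage $g = \begin{pmatrix} \delta I_{d-1} & b \\ 0 & \delta\end{pmatrix}$, which is block upper-triangular with $\det g = \delta^d$. Since $g \in \GL_d(\Z)$ we have $\delta^d = \pm 1$ with $\delta \in \Z$, so $\delta = \pm 1$. If $\delta = 1$ then $g = s_b \in \Lambda$, and if $\delta = -1$ then $-g = s_{-b} \in \Lambda$; this is exactly the claimed dichotomy.

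The argument is entirely elementary, so there is no real obstacle; the only points demanding a little care are getting the block product right (the sign in the top-right corner and the rank-one term $(cu)u$), and observing that each finiteness reduction crucially uses that $u$ ranges over all of $\Z^{d-1}$ rather than over a proper coset or subgroup, so that ``$(\delta I_{d-1}-A)u$ takes finitely many values'' genuinely forces $\delta I_{d-1}-A = 0$.
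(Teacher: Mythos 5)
Your proof is correct and takes essentially the same approach as the paper's: both conjugate $g$ by elements of $\Lambda$ and first force the bottom-left block $c$ to vanish, then force $A = \delta I_{d-1}$ from the linear growth of the top-right block, and finally identify $\delta = \pm 1$ to get $g \in \Lambda$ or $-g \in \Lambda$. The only cosmetic difference is that you work with a single explicit block formula throughout (and extract $\delta = \pm 1$ from $\det g = \delta^d$), whereas the paper does the first step entry-wise; the substance is identical.
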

\begin{proof}
Take $g \in \Gamma$ with coefficients $g_{i,j}$, $1 \leq i,j\leq d$. 
Denote by $j : \Z^{d-1} \to \Lambda < \Gamma$ the embedding described in the statement. Denote by $e_1,\dots,e_{d-1}$ the canonical basis of $\Z^{d-1}$. Then for $1 \leq k \leq d-1$, and any $n \geq 0$, we have
\[ [j(ne_k)gj(ne_k)^{-1}]_{k,k} = g_{k,k} + ng_{d,k}.\]
Therefore, if $g_{d,k} \neq 0$ for some $1 \leq k \leq d-1$, then we may conjugate $g$ by elements $j(ne_k)$ with $n$ arbitrarily large and obtain this way elements whose $(k,k)$-th coefficient is arbitrarily large. Hence the set $\{sgs^{-1} \, , \, s \in \Lambda\}$ is infinite in this case.

Otherwise, we may write $g = \begin{pmatrix} A&u\\0&a\end{pmatrix}$ with $A \in \GL_{d-1}(\Z)$, $a = \det(A) = \pm 1$ and $u \in \Z^{d-1}$. For all $n \geq 1$ and $v \in \Z^{d-1}$, we then have
\[j(nv)gj(nv)^{-1} = \begin{pmatrix}A& u+ n(Av -av)\\0 & a\end{pmatrix}.\]
 If $A \neq \pm I_{d-1}$ then we may choose $v \in \Z^{d-1}$ in such a way that $Av \neq \det(A)v$. Then the conjugacy class $\{sgs^{-1} \, , \, s \in \Lambda\}$ is infinite in this case. The remaining case is then $A = \pm I_{d-1}$ and $a = \det(A)$, which is clearly equivalent to either $g \in \Lambda$, or $-g \in \Lambda$.
\end{proof}

\bibliographystyle{amsalpha}
\bibliography{ref}

\end{document}